\documentclass[12pt]{amsart}
\usepackage{amssymb}

\textwidth=160mm
\textheight=200mm
\topmargin=10mm
\hoffset=-20mm

\newtheorem{theorem}{Theorem}[section]
\newtheorem{definition}[theorem]{Definition}
\newtheorem{proposition}[theorem]{Proposition}
\newtheorem{conjecture}[theorem]{Conjecture}
\newtheorem{heuristic}[theorem]{Heuristic}

\begin{document}

\title[Quasi-flat representations]{Quasi-flat representations of uniform groups and quantum groups}

\author{Teodor Banica}
\address{T.B.: Department of Mathematics, University of Cergy-Pontoise, F-95000 Cergy-Pontoise, France. {\tt teo.banica@gmail.com}}

\author{Alexandru Chirvasitu}
\address{A.C.: Department of Mathematics, 244 Mathematics Building, University at Buffalo, Buffalo, NY 14260-2900, USA. {\tt achirvas@buffalo.edu}}

\begin{abstract}
Given a discrete group $\Gamma=<g_1,\ldots,g_M>$ and a number $K\in\mathbb N$, a unitary representation $\rho:\Gamma\to U_K$ is called quasi-flat when the eigenvalues of each $\rho(g_i)\in U_K$ are uniformly distributed among the $K$-th roots of unity. The quasi-flat representations of $\Gamma$ form altogether a parametric matrix model $\pi:\Gamma\to C(X,U_K)$. 

We compute here the universal model space $X$ for various classes of discrete groups, notably with results in the case where $\Gamma$ is metabelian. We are particularly interested in the case where $X$ is a union of compact homogeneous spaces, and where the induced representation $\tilde{\pi}:C^*(\Gamma)\to C(X,U_K)$ is stationary in the sense that it commutes with the Haar functionals. We present several positive and negative results on this subject.

We also discuss similar questions for the discrete quantum groups, proving a stationarity result for the discrete dual of the twisted orthogonal group $O_2^{-1}$.
\end{abstract}

\subjclass[2010]{20G42 (16T20)}
\keywords{discrete quantum group, random matrix model, quasi-flatness, inner faithfulness, stationary model}

\maketitle

\section*{Introduction}

Interesting quantum analogues of the compact Lie groups $G\subset U_N$ were introduced by Woronowicz in \cite{wo1,wo2}. While the Lie theory is lacking in general, for such quantum groups, various representation-theoretic tools such as Peter-Weyl theory and Tannakian duality are available. These can be deployed in the study of the algebraic relations between the standard coordinates $u_{ij}\in C(G)$, partially recovering some of the tractability of function algebras on ordinary compact Lie groups. 

This avenue of investigation naturally suggests connections to probability theory, both classical and noncommutative (or free). Indeed, the knowledge of the Schur-Weyl dual of $G$ allows one to explicitly compute the integrals of type $\int_Gu_{i_1j_1}\ldots u_{i_kj_k}$ via an extension of the Weingarten formula \cite{csn,wei}, and this leads to a number of probabilistic applications. The geometric meaning of these computations remains, however, largely mysterious.

One idea going back to \cite{ba1} is that one could probe the nature of the algebra $C(G)$ by finding matrix models for the coordinates $u_{ij}\in C(G)$. A priori this is a notion which is interesting only when $C(G)$ is of type I, but due to the notion of ``inner faithfulness'' appearing in \cite{ba1} and later axiomatized and studied in \cite{bb1}, there are in fact no restrictions on $C(G)$. More precisely, consider an arbitrary matrix model for $C(G)$, in the sense that we have a representation of algebras as follows, with $X$ being a compact space:
\begin{equation*}
\pi:C(G)\to M_K(C(X))  
\end{equation*}

If we consider the dual discrete quantum group $\Gamma=\widehat{G}$, the inner faithfulness property states that the model must be faithful on $\Gamma\subset C^*(\Gamma)=C(G)$ instead of being faithful globally. Examples of such models abound, notably for certain non-type-I (and in fact non-amenable) algebras $C(G)$. In fact, there is no known restriction on the class of algebras of type $C(G)$ which can be modelled in this manner.

The interest in the topic stems from the fact that even in the presence of this weak notion of faithfulness, matrix models recover a lot of interesting information about $G$. In fact $G$ itself can be recovered from the model through abstract Tannakian formalism, and we direct here the reader to \cite{bb1,bb2,bfs} for a number of standard results, allowing one to read off various properties of $G$ from its matrix model.

The natural candidates for such a study are the quantum permutation groups $G\subset S_N^+$. There are a number of mathematical and physical motivations for this choice, going back to \cite{ba1} and subsequent papers. For such a quantum group the simplest possible models are those which are ``quasi-flat'' in the sense that the standard coordinates $u_{ij}\in C(G)$, known to be projections, are mapped into projections of rank $\leq1$. Some work on the quasi-flat models was recently done in \cite{ba2,bb2,bch,bfr,bne}. 

Matrix models provide another possible connection to the literature via the study of {\it character} or {\it representation varieties} for discrete groups \cite{km1,km2,nak1,nak2,rap,sik}. These are spaces (typically algebraic varieties) parametrizing the linear representations of $\Gamma$ in much the same way that our space $X$ above does. For this reason, the present work can be regarded as an investigation of varieties of sufficiently well-behaved (i.e. quasi-flat) representations of discrete {\it quantum} groups. 

We will review here some of the basic results and conjectures on such models with a number of new contributions, either novel or as enhancements of prior work. Our main results will concern the notion of stationarity, which asks that the matrix model be compatible with the canonical integration functionals, via a formula as follows, for an appropriately chosen integration measure on $X$:
$$\int_G=\left(tr\otimes\int_X\right)\pi$$

This is a rather strong condition, implying for instance that $\pi$ is faithful. In fact, as explained in \cite{bch}, for most known examples this condition entails the von Neumann algebra $L^\infty(G)$ being of type I, with the group dual case $G=\widehat{\Gamma}$ corresponding to Thoma's theorem \cite{tho}. A fully understanding of this notion is therefore a matter of general interest, for instance in connection with von Neumann's type I-II-III philosophy \cite{mvo,von,kap}.

The paper is organized as follows: Sections 1-2 mostly contain preliminary material, in Sections 3-4-5 we prove that the universal matrix models of the dihedral groups are stationary, and then we prove a number of positive and negative results regarding more general classes of metabelian groups, and finally in Sections 6-7 we show that the discrete quantum dual of the twisted orthogonal group $O_2^{-1}$ admits a stationary matrix model, basing our analysis on the techniques from our previous paper \cite{bch}. 

\medskip

\noindent {\bf Acknowledgements.} A.C. is grateful for partial support from the NSF through grant DMS-1565226.

\section{Matrix models}

We use Woronowicz's quantum group formalism in \cite{wo1}, \cite{wo2}, with the extra assumption $S^2=id$. An extra source of useful information comes from the more recent papers of Maes and Van Daele \cite{mvd} and Malacarne \cite{mal}, which review this material, with a few simplifications. There is as well the book by Neshveyev and Tuset \cite{ntu}.

We recall that a magic unitary matrix is a square matrix over a $C^*$-algebra, $u\in M_N(A)$,  whose entries are projections ($p^2=p^*=p$), summing up to $1$ on each row and each column. The following key definition is due to Wang \cite{wa1}:

\begin{definition}\label{def.sn}
$C(S_N^+)$ is the universal $C^*$-algebra generated by the entries of an $N\times N$ magic unitary matrix $u=(u_{ij})$, with the morphisms given by
$$\Delta(u_{ij})=\sum_ku_{ik}\otimes u_{kj}\quad,\quad\varepsilon(u_{ij})=\delta_{ij}\quad,\quad S(u_{ij})=u_{ji}$$
as comultiplication, counit and antipode. 
\end{definition}

This algebra satisfies Woronowicz' axioms, and the underlying compact quantum group $S_N^+$ is called quantum permutation group. We have an inclusion $S_N\subset S_N^+$, given at the algebra level by $u_{ij}=\chi(\sigma\in S_N|\sigma(j)=i)$, which is an isomorphism at $N=2,3$, but not at $N\geq4$, where $S_N^+$ is non-classical, and infinite. See \cite{bb2}, \cite{bfs}, \cite{wa2}.

Any closed subgroup $G\subset S_N^+$ can be thought of as ``acting'' on the set $\{1,\ldots,N\}$, and one can talk about the orbits of this action. The theory here was developed in \cite{bic}, and also recently in \cite{bfr}. In what follows, we will only need the following notions:

\begin{definition}
Let $G\subset S_N^+$ be a closed subgroup, with magic unitary $u=(u_{ij})$, and consider the equivalence relation on $\{1,\ldots,N\}$ given by $i\sim j\iff u_{ij}\neq0$.
\begin{enumerate}
\item The equivalence classes under $\sim$ are called orbits of $G$.

\item $G$ is called transitive when the action has a single orbit. 

\item $G$ is called quasi-transitive when all the orbits have the same size.
\end{enumerate}
\end{definition}

Here the fact that $\sim$ as defined above is indeed an equivalence relation follows by applying $\Delta,\varepsilon,S$ to a formula of type $u_{ij}\neq0$. For details, see \cite{bfr}.

In the classical case, $G\subset S_N$, we recover in this way the usual notions of orbits, transitivity, and quasi-transitivity. In general, there are many interesting examples of closed subgroups $G\subset S_N^+$ which are transitive, or at least quasi-transitive.

At the level of the general theory, we have the following result, from \cite{bfr}:

\begin{proposition}
The following quantum groups are quasi-transitive: 
\begin{enumerate}
\item Those of product type: the intermediate quantum groups $G_1\times\ldots\times G_M\subset G\subset G_1\,\hat{*}\,\ldots\,\hat{*}\,G_M$, with $G_1,\ldots,G_M\subset S_K^+$ assumed to be transitive.

\item Those of induced type: the normal closed quantum subgroups $H\triangleleft G$, with $G\subset S_N^+$ assumed to be transitive.
\end{enumerate}
\end{proposition}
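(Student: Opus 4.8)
The plan is to handle the two cases separately, in each reducing the problem to the behaviour of the orbit relation $i\sim j\iff u_{ij}\neq0$ under the surjections of $C^*$-algebras dual to the given inclusions of quantum groups.

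For the product case I would first record that the dual free product $B:=G_1\,\hat{*}\,\ldots\,\hat{*}\,G_M\subset S_{MK}^+$ has block-diagonal magic unitary $u^B=\mathrm{diag}(u^{(1)},\ldots,u^{(M)})$, where $u^{(m)}$ is the magic unitary of $G_m\subset S_K^+$, and similarly for the direct product $A:=G_1\times\ldots\times G_M$. The two inclusions provide surjections $C(B)\xrightarrow{\phi}C(G)\xrightarrow{\psi}C(A)$ under which the magic unitary of $G$ is $\phi$ applied entrywise to $u^B$, and maps in turn onto $u^A$ via $\psi$; since a morphism sends $0$ to $0$, the zero/nonzero patterns satisfy, as equivalence relations, $i\sim_A j\Rightarrow i\sim_G j\Rightarrow i\sim_B j$. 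On each of the $M$ diagonal blocks the relevant entries of $u^B$ (resp.\ $u^A$) are the images of the $u^{(m)}_{ij}$ under the injection $C(G_m)\hookrightarrow C(G_1)*\ldots*C(G_M)$ (resp.\ the inclusion into the tensor product), so by transitivity of $G_m$ every such entry is nonzero, while the off-diagonal entries vanish. Hence $\sim_A$ and $\sim_B$ both equal the ``same block'' partition, and the squeeze forces $\sim_G$ to agree with them: $G$ has exactly $M$ orbits, each of size $K$.

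For the induced case the strategy mirrors the classical fact that the orbits of a normal subgroup under a transitive action form a block system. Write $\pi:C(G)\to C(H)$ for the surjection attached to $H\subset G$, so $u^H_{ij}=\pi(u^G_{ij})$ and $i\sim_H j\iff\pi(u^G_{ij})\neq0$, and let $\epsilon\in M_N(\mathbb{C})$ be the partition matrix of the $H$-orbits, with $\epsilon_{ij}=1$ exactly when $i\sim_H j$. The key claim is that normality of $H$ in $G$ forces $\epsilon$ to intertwine the fundamental representation, $\epsilon\,u^G=u^G\epsilon$. Granting this, I would invoke the orbit theory of \cite{bfr}, where the dimension of $\mathrm{Fix}(u^G)$ equals the number of orbits; transitivity of $G$ thus gives $\mathrm{Fix}(u^G)=\mathbb{C}\xi_0$ with $\xi_0=(1,\ldots,1)$. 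Since $u^G\xi_0=\xi_0$, the intertwining relation yields $u^G(\epsilon\xi_0)=\epsilon(u^G\xi_0)=\epsilon\xi_0$, so $\epsilon\xi_0\in\mathrm{Fix}(u^G)=\mathbb{C}\xi_0$. As the $i$-th entry of $\epsilon\xi_0$ is precisely the size of the $H$-orbit of $i$, a constant vector means all $H$-orbits have the same size, which is quasi-transitivity.

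The main obstacle is the intertwining claim $\epsilon\,u^G=u^G\epsilon$, which is where normality must be used in full. Concretely I would translate the defining property of a normal quantum subgroup—invariance of $\ker\pi$ under the adjoint (conjugation) coaction of $G$—into the statement that the relation $\sim_H$ is preserved by the quantum action of $G$, and then check that such invariance is equivalent to the partition idempotent $\epsilon$ commuting with $u^G$. This is the quantum counterpart of the elementary computation showing that, for a $G$-invariant equivalence relation, the associated $0/1$ matrix commutes with every permutation matrix lying in $G$; the difficulty lies in executing it at the level of the Hopf-algebraic normality condition rather than for individual group elements. Everything else—the block-diagonal bookkeeping in the product case and the fixed-vector argument in the induced case—is routine once this commutation is in hand.
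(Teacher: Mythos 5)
Your treatment of part (1) is correct and is essentially the paper's own argument: the paper likewise observes that both the direct product and the dual free product have block-diagonal magic unitaries with orbits of size $K$, and that $G$ sitting between them forces $u_{ij}$ to vanish exactly outside the $K\times K$ blocks. Your version merely makes explicit the squeeze $i\sim_A j\Rightarrow i\sim_G j\Rightarrow i\sim_B j$ coming from the two surjections, which is fine.

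For part (2), however, your argument has a genuine gap at exactly the point you flag, and it is worth saying why the gap is not merely a matter of ``executing the computation.'' The Hopf-algebraic normality condition (in the sense of \cite{cdp}, or Wang's equivalent formulation $(\mathrm{id}\otimes h_H\pi)\Delta=(h_H\pi\otimes\mathrm{id})\Delta$, where $h_H$ is the Haar state of $H$ and $\pi:C(G)\to C(H)$ the quotient map) does yield a commutation relation essentially for free: applying it to $u^G_{ij}$ gives $u^GP=Pu^G$ for the matrix $P_{ij}=h_H(u^H_{ij})$. But $P$ is the \emph{orthogonal projection} onto $\mathrm{Fix}(u^H)=\mathrm{span}\{\chi_{O_a}\}$, i.e.\ $P_{ij}=1/|O_a|$ for $i,j$ in the same orbit $O_a$ and $0$ otherwise, whereas your $\epsilon$ is the unweighted $0/1$ matrix $\sum_a\chi_{O_a}\chi_{O_a}^*$. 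The two differ precisely by the orbit sizes, so upgrading $Pu^G=u^GP$ to $\epsilon u^G=u^G\epsilon$ is more or less equivalent to already knowing the orbits are equinumerous — a circularity risk at the crux. Worse, your concluding trick is vacuous when applied to the matrix that normality actually hands you: since $P$ has row sums $1$, one gets $P\xi_0=\xi_0$ automatically, and no information about orbit sizes. (The trick does work for $\epsilon$, since $(\epsilon\xi_0)_i=|O_{b(i)}|$; the conclusion is correct \emph{granting} the claim.) So either you must prove $\epsilon u^G=u^G\epsilon$ directly from normality — which is the entire difficulty, not a routine translation — or you must extract quasi-transitivity from $Pu^G=u^GP$ by a different mechanism, e.g.\ by analysing the subcorepresentation of $u^G$ on $\mathrm{Fix}(u^H)$. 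The paper itself does not carry this out either, deferring to the normal-quantum-subgroup theory of \cite{cdp} and to \cite{bfr}; but as a self-contained proof your proposal stops exactly where the real work begins.
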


\begin{proof}
These results are both elementary, the idea being as follows:

(1) This is trivial, because by \cite{wa1} both the usual product $G_1\times\ldots\times G_M$ and the dual free product $G_1\,\hat{*}\,\ldots\,\hat{*}\,G_M$ are quasi-transitive, with orbits of size $K$, and since $G$ sits in between, the coordinates $u_{ij}$ vanish exactly outside the corresponding $K\times K$ blocks.

(2) This is something elementary in the classical case, where the orbits of a group $H\triangleleft G\curvearrowright\{1,\ldots,N\}$, with $G$ transitive, can be put in explicit bijection. In general, this can be proved by using the theory of normal quantum subgroups in \cite{cdp}. See \cite{bfr}.
\end{proof}

We should mention that, unlike in the classical case, the above notions are quite tricky, and there are several open problems regarding them. Of interest for instance is the study of these notions for the quantum automorphism groups of the finite graphs, and there are many unsolved questions here, waiting to be studied. See \cite{bfr}, \cite{bic}, \cite{cha}.

Given a closed subgroup $G\subset S_N^+$, we will be interested here in the matrix models for the algebra $C(G)$. There are several known constructions of such models, and in the quasi-transitive case, the ``simplest'' models are as follows:

\begin{definition}
Let $G\subset S_N^+$ be quasi-transitive, with orbits having size $K$.
\begin{enumerate}
\item A matrix model $\pi:C(G)\to M_K(C(X))$, with $X$ being compact, is called quasi-flat when $P_{ij}=\pi(u_{ij})$ are such that each fiber $P_{ij}^x\in M_K(\mathbb C)$ is of rank $\leq1$.

\item The universal quasi-flat matrix model for $C(G)$, obtained by using the Tannakian relations which define $G$, is denoted $\pi:C(G)\to M_K(C(X_G))$.
\end{enumerate}
\end{definition}

In order to comment on these notions, assume first that $G\subset S_N^+$ is transitive. Given a model $\pi:C(G)\to M_N(C(X))$, mapping $u_{ij}\to P_{ij}^x$, the matrices $(d^x)_{ij}=tr(P_{ij}^x)$ are all bistochastic, with sum $1$. The simplest situation is that when $d^x=(1/N)_{ij}$ is the flat matrix, for any $x\in X$, and in this case we call our model ``flat''.

In the non-transitive case we cannot have flat models, simply because $u_{ij}=0$ implies $P_{ij}^x=0$, for any $x\in X$. However, assuming that $G\subset S_N^+$ is quasi-transitive, with orbits of size $K$, we can consider models of type   $\pi:C(G)\to M_K(C(X))$, with the assumption $d^x_{ij}\leq1/K$ for any $i,j,x$. Thus, we are led to the quasi-flatness notion in (1).

Regarding now (2), here the fact that the universal quasi-flat model exists, is unique, and appears as in the statement is a straightforward consequence of Woronowicz's Tannakian duality results in \cite{wo2}. We refer to \cite{bfr}, \cite{bne}, \cite{mal} for details here.

We would like to understand the faithfulness properties of the various quasi-flat models, including those of the universal one. We use the following notions:

\begin{definition}
A matrix model $\pi:C(G)\to M_K(C(X))$ is called:
\begin{enumerate}
\item Inner faithful, when there is no factorization $\pi:C(G)\to C(H)\to M_K(C(X))$, with $H\subset G$ being a proper closed subgroup.

\item Stationary, when the Haar integration over $G$ appears as $\int_G=(tr\otimes\int_X)\pi$, where $\int_X$ is the integration with respect to a probability measure on $X$.
\end{enumerate}
\end{definition}

These notions are both quite subtle. Regarding (1), in the group dual case, $G=\widehat{\Gamma}$, our model must come from a group representation $\rho:\Gamma\to C(X,U_K)$, and the inner faithfulness of $\pi$ means precisely that $\rho$ must be faithful. In general, what we have here is an extension of this fact. As for (2), the notion there, and the terminology, come from the idempotent state work on the inner faithfulness property in \cite{ba2}, \cite{bfs}, \cite{wa3}, to be explained later on. Let us just mention here, as a basic fact regarding the stationarity, that this property implies the faithfulness. See \cite{ba2}, \cite{bb1}, \cite{bch}, \cite{bfs}, \cite{bfr}, \cite{chi}, \cite{wa3}.

As an illustration, let us first discuss the classical case. With the convention that we identify the rank one projections in $M_K(\mathbb C)$ with the corresponding elements of the complex projective space $P^{K-1}_\mathbb C$, we have the following result, from \cite{bfr}:

\begin{proposition}
Given a quasi-transitive group $G\subset S_N$, with orbits having size $K$, the associated universal quasi-flat model space is $X_G=E_K\times L_{N,K}^G$, where:
$$E_K=\left\{P_1,\ldots,P_K\in P^{K-1}_\mathbb C\Big|P_i\perp P_j,\forall i,j\right\}$$
$$L_{N,K}^G=\left\{\sigma_1,\ldots,\sigma_K\in G\Big|\sigma_1(i),\ldots,\sigma_K(i)\ {\rm distinct},\forall i\in\{1,\ldots,N\}\right\}$$
In addition, assuming that we have $L_{N,K}^G\neq\emptyset$, the universal quasi-flat model is stationary, with respect to the Haar measure on $E_K$ times the discrete measure on $L_{N,K}^G$.
\end{proposition}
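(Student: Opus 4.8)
The plan is to describe the $\mathbb C$-points of $X_G$ directly, since by the Tannakian construction recalled above the universal quasi-flat model is tautological over the space of all quasi-flat magic unitaries $(P_{ij})$ over $\mathbb C$ satisfying the relations that cut out $G\subset S_N$; identifying this point-set, with its natural topology, is therefore the whole problem. The starting observation is that $C(G)$ is commutative, so at each point the projections $P_{ij}\in M_K(\mathbb C)$ mutually commute. Commuting rank-$\leq1$ projections are pairwise either equal or orthogonal, so the distinct nonzero values occurring among all the $P_{ij}$ form an orthonormal family of rank-one projections $Q_1,\ldots,Q_r$ with $r\leq K$. On the other hand each column $j$ consists of exactly $K$ nonzero entries (those with $i\sim j$), mutually orthogonal and summing to $1$, forcing $r=K$. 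Thus every point determines a single common eigenbasis, i.e. a point $(Q_1,\ldots,Q_K)\in E_K$, and each $P_{ij}$ equals some $Q_a$ or $0$.

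Next I would recover the permutations. For each $a$ and each $j$ the magic relations force $Q_a$ to appear exactly once in column $j$ and exactly once in row $i$, so setting $\sigma_a(j)=i\iff P_{ij}=Q_a$ defines a permutation $\sigma_a$ of $\{1,\ldots,N\}$, and $P_{ij}=\sum_a\delta_{\sigma_a(j),i}Q_a$. To see $\sigma_a\in G$, note that in the eigenbasis $\xi_a$ of the $Q_a$ the assignment $u_{ij}\mapsto\langle\xi_a,P_{ij}\xi_a\rangle=\delta_{\sigma_a(j),i}$ is a character of $C(G)$, hence a point of $G$, which is precisely the permutation $\sigma_a$. Finally $P_{ij}$ has rank $|\{a:\sigma_a(j)=i\}|$, so quasi-flatness holds if and only if $\sigma_1(i),\ldots,\sigma_K(i)$ are distinct for all $i$, i.e. $(\sigma_1,\ldots,\sigma_K)\in L^G_{N,K}$. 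Conversely, for any $(E,\sigma_\bullet)\in E_K\times L^G_{N,K}$ the same formula defines a quasi-flat magic unitary which factors through $C(G)$, being in the $\xi_a$-basis the diagonal sum of the characters $\mathrm{ev}_{\sigma_a}$. One then checks that $(E,\sigma_\bullet)\mapsto(P_{ij})$ and its inverse are continuous, giving the homeomorphism $X_G\cong E_K\times L^G_{N,K}$, with $L^G_{N,K}$ discrete.

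For stationarity it suffices to match $\int_G$ and $(tr\otimes\int_X)\pi$ on monomials $u_{i_1j_1}\cdots u_{i_kj_k}$, which span $C(G)$. Using $Q_aQ_b=\delta_{ab}Q_a$ and $tr(Q_a)=1/K$ one gets
$$tr\big(P_{i_1j_1}\cdots P_{i_kj_k}\big)=\frac1K\sum_{a=1}^K\prod_{t}u_{i_tj_t}(\sigma_a),$$
which is independent of the $E_K$-coordinate, so the Haar integral over $E_K$ acts trivially and only the average over $L^G_{N,K}$ remains. The crux is then a marginal computation: since $L^G_{N,K}$ is invariant under the diagonal left translation $(\sigma_b)\mapsto(g\sigma_b)$ by any $g\in G$, left translation by $\tau'\tau^{-1}$ gives a bijection between the slices $\{\sigma_a=\tau\}$ and $\{\sigma_a=\tau'\}$, so each marginal $\sigma_a$ is uniform on $G$, with every fibre of size $|L^G_{N,K}|/|G|$. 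Averaging the displayed formula over $L^G_{N,K}$ and summing over $a$ then collapses exactly to $\frac1{|G|}\sum_{\sigma\in G}\prod_t u_{i_tj_t}(\sigma)=\int_G u_{i_1j_1}\cdots u_{i_kj_k}$; the hypothesis $L^G_{N,K}\neq\emptyset$ is what makes both the uniform measure and this argument meaningful.

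I expect the main obstacle to be the structural step rather than the stationarity bookkeeping: pinning down that a quasi-flat magic unitary over $\mathbb C$ satisfying the $G$-relations must arise from a single orthonormal frame together with $K$ honest elements $\sigma_a\in G$ — in particular the character argument forcing $\sigma_a\in G$ — and then upgrading the resulting bijection to a homeomorphism onto $E_K\times L^G_{N,K}$ carrying the correct Haar-times-discrete measure. Once this normal form is in hand, the stationarity identity reduces to the elementary translation-invariance of $L^G_{N,K}$ carried out above.
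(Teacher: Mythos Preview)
Your approach is exactly the paper's: use the equal-or-orthogonal dichotomy for commuting rank-one projections to extract a single orthonormal frame and a sparse Latin square pattern, then check stationarity. You supply considerably more detail than the paper's one-line sketch (which defers to \cite{bfr}), particularly the character argument forcing $\sigma_a\in G$ and the left-translation-invariance computation showing each marginal of the uniform measure on $L^G_{N,K}$ is Haar on $G$.

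One small point to be careful about: the assignment $(E,\sigma_\bullet)\mapsto(P_{ij})$ is not a bijection but $K!$-to-one, since relabelling the frame by any $\tau\in S_K$ and simultaneously permuting the $\sigma_a$ yields the same magic unitary; your claimed ``inverse'' is only well-defined up to this diagonal $S_K$-action. This does not affect the stationarity conclusion, as the product of Haar measure on $E_K$ with counting measure on $L^G_{N,K}$ is invariant under that action, and the paper's own formulation carries the same redundancy.
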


\begin{proof}
The key remark here is that two commuting rank 1 projections must be either equal, or orthogonal. Thus, a quasi-flat model for $C(G)$ must be of the form $u_{ij}\to P_{L_{ij}}$, with $P\in E_K$ and with $L\in M_N(*,1,\ldots,K)$ being a ``sparse Latin square'', with the convention $P_*=0$, and this gives the result. See \cite{bfr}.
\end{proof}

We recall now from Bichon's paper \cite{bic} that the group dual subgroups $\widehat{\Gamma}\subset S_N^+$ appear from the quotients of type $\mathbb Z_{K_1}*\ldots*\mathbb Z_{K_M}\to\Gamma$, with $N=K_1+\ldots+K_M$, via a Fourier transform type construction. This result can be used in order to characterize the group duals $\widehat{\Gamma}\subset S_N^+$ which are quasi-transitive, and then to investigate the quasi-flat models for the corresponding algebras $C(\widehat{\Gamma})=C^*(\Gamma)$. The result, from \cite{bch}, is as follows:

\begin{theorem}
The quasi-transitive group duals $\widehat{\Gamma}\subset S_N^+$, with orbits having $K$ elements, have the following properties:
\begin{enumerate}
\item They come from the quotients $\mathbb Z_K^{*M}\to\Gamma$, with $M=N/K$, having the property that the corresponding $M$ morphisms $\mathbb Z_K^{(i)}\subset\mathbb Z_K^{*M}\to\Gamma$ are all injective.

\item For such a quotient, a matrix model $\pi:C^*(\Gamma)\to M_K(\mathbb C)$ is quasi-flat if and only if it is stationary on each subalgebra $C^*(\mathbb Z_K^{(i)})\subset C^*(\Gamma)$.

\item Equivalently, when writing $\mathbb Z_K^{(i)}=<g_i>$, each of the matrices $\pi(g_i)\in U_K$ must has its eigenvalues uniformly distributed over the $K$-th roots of unity.

\item More generally, $\pi:C^*(\Gamma)\to M_K(C(X))$ is quasi-flat when the associated unitary representations $\rho_x:\Gamma\to U_K$ all satisfy the ``quasi-flatness'' condition in (3).
\end{enumerate}
\end{theorem}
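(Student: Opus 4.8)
The plan is to make Bichon's Fourier-transform construction completely explicit and then reduce every assertion to a single spectral computation for the images $\pi(g_i)$. First I would recall the shape of the embedding $\widehat{\Gamma}\subset S_N^+$ attached to a quotient $q:\mathbb Z_{K_1}*\cdots*\mathbb Z_{K_M}\to\Gamma$: writing $g_i$ for the image of the generator of the $i$-th factor and fixing a primitive $K_i$-th root of unity $\omega_i$, the magic unitary $u\in M_N(C^*(\Gamma))$ is block-diagonal, its $i$-th block of size $K_i$ having entries
$$u^{(i)}_{ab}=\frac{1}{K_i}\sum_{k=0}^{K_i-1}\omega_i^{-(a-b)k}g_i^k.$$
A one-line check, $g_i\,u^{(i)}_{ab}=\omega_i^{a-b}u^{(i)}_{ab}$, identifies $u^{(i)}_{ab}$ as the spectral projection of $g_i$ for the eigenvalue $\omega_i^{a-b}$, and this single remark will power all four parts.

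For part (1) I would compute, for a generator $g_i$ of order $d_i$ in $\Gamma$ (so $d_i\mid K_i$), exactly when $u^{(i)}_{ab}\neq0$. Collecting the summands $g_i^k$ according to the residue $k\bmod d_i$ turns the coefficient of each surviving group element into a geometric sum with ratio $\omega_i^{d_i(a-b)}$, which vanishes unless $(K_i/d_i)\mid(a-b)$. Hence the $i$-th block splits into $K_i/d_i$ orbits, each of size $d_i$. Quasi-transitivity with common orbit size $K$ therefore forces $d_i=K$ for every $i$; passing to the reduced presentation one may take $K_i=K$, the condition $d_i=K$ becoming precisely injectivity of $\langle g_i\rangle=\mathbb Z_K^{(i)}\to\Gamma$, and counting orbits gives $M=N/K$. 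I expect this normalization of the free-product presentation to be the main obstacle: one must argue that a quasi-transitive dual can always be realized with each factor genuinely equal to $\mathbb Z_K$ rather than a larger cyclic group projecting onto it.

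For parts (2) and (3), which I would handle together, fix a fiber model $\pi:C^*(\Gamma)\to M_K(\mathbb C)$ and put $V_i=\pi(g_i)\in U_K$. Since $g_i$ has order $K$ we have $V_i^K=1$, so every eigenvalue of $V_i$ is a $K$-th root of unity, and by the spectral-projection remark $\pi(u^{(i)}_{ab})$ is the spectral projection of $V_i$ for $\omega^{a-b}$ (with $\omega$ a primitive $K$-th root of unity). This has rank $\leq1$ for all $a,b$ exactly when each $K$-th root of unity occurs among the eigenvalues of $V_i$ with multiplicity at most one; as the multiplicities sum to $K$, this means each occurs exactly once, which is the eigenvalue condition of (3). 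To bring in (2), I would view the eigenvalue distribution of $V_i$ as a probability measure on the $K$-th roots of unity whose $k$-th moment is $tr(V_i^k)$: it is the uniform measure if and only if all its nontrivial moments vanish, i.e.\ $tr(V_i^k)=\delta_{k0}$, which says exactly that $tr\circ\pi$ restricts to the Haar state on $C^*(\mathbb Z_K^{(i)})$; and uniformity is in turn equivalent to each root having multiplicity one. This closes the loop quasi-flatness $\iff$ (2) $\iff$ (3).

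Part (4) is then a routine passage to fibers. A model $\pi:C^*(\Gamma)\to M_K(C(X))$ is the same datum as a field of representations $\rho_x:\Gamma\to U_K$, with $P_{ij}^x$ the value at $x$ of the corresponding evaluation $\pi_x:C^*(\Gamma)\to M_K(\mathbb C)$; thus $\pi$ is quasi-flat precisely when every fiber model $\pi_x$ is, and applying the already-established equivalence (3) pointwise in $x$ yields the asserted condition on each $\rho_x(g_i)$. No new difficulty enters here, so the genuine content is concentrated in (1)--(3), with the presentation-normalization in (1) the one step requiring real care.
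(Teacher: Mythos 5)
Your proposal is correct and follows the same route the paper intends: Bichon's Fourier-transform picture of group-dual subgroups, the identification of the block entries $u^{(i)}_{ab}$ as spectral projections of $g_i$, and the resulting equivalence of quasi-flatness with uniform spectral multiplicity and with stationarity on each $C^*(\mathbb Z_K^{(i)})$ (the paper merely defers these computations to \cite{bic} and \cite{bch}, which you have usefully made explicit). The presentation-normalization step you flag in (1) does go through: restricting the Fourier block of a $\mathbb Z_{K_i}$-factor with $d_i=K$ to one of its $K_i/K$ orbits reproduces exactly the Fourier block of $\mathbb Z_K=\langle g_i\rangle$ for the primitive root $\omega_i^{K_i/K}$, so the subgroup is realized by the reduced presentation as claimed.
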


\begin{proof}
Here (1) follows from the above-mentioned result from \cite{bic}, (2) follows via an elementary Fourier transform computation, (3) follows by interpreting the stationarity condition found in (2), and finally (4) follows from (3). For details here, see \cite{bch}.
\end{proof}

The above result provides us with a whole new point of view on the quasi-flat models. Indeed, let us first axiomatize the condition found in (3) above:

\begin{definition}
Given a finitely generated discrete group $\Gamma=<g_1,\ldots,g_M>$, we call a parametric unitary representation $\rho:\Gamma\to C(X,U_K)$ quasi-flat when the eigenvalues of each $\rho_x(g_i)\in U_K$ are uniformly distributed among the $K$-th roots of unity.
\end{definition}

Observe that, assuming that $\rho$ as above is faithful, the generators $g_1,\ldots,g_M$ must satisfy $g_i^K=1$ for any $i$. Thus, while this definition is formulated for any $\Gamma$, its range of applications is limited to the case where we have a quasi-flat embedding $\widehat{\Gamma}\subset S_{KM}^+$.

With this picture in hand, which is purely group-theoretical, our general quasi-flat models, as axiomatized in Definition 1.4 above, simply appear via a ``quantum extension of this notion'', by replacing $\Gamma$ with an arbitrary discrete quantum group.

We are of course mostly interested in understanding when these models satisfy the various notions of faithfulness from Definition 1.5. The subject here is non-trivial, and the above results, together with some other results from \cite{bch}, \cite{bfr}, \cite{bne}, which are more technical and will be explained later on, suggest the following conjecture:

\begin{conjecture}
Assume that $G\subset S_N^+$ is quasi-transitive, with orbits having size $K$, and consider the universal flat model $\pi:C(G)\to M_K(C(X_G))$.
\begin{enumerate}
\item If $G$ satisfies suitable ``transitivity type'' assumptions, $\pi$ is inner faithful.

\item If $\widehat{G}$ satisfies suitable ``virtual abelianity'' assumptions, $\pi$ is stationary.
\end{enumerate}
\end{conjecture}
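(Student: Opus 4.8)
The plan is to attack the two parts through complementary machinery, in both cases passing through the natural model functional $\psi=(tr\otimes\int_X)\pi$ on $C(G)$ and comparing it with the Haar state $\int_G$.

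For part (1), I would use the characterization of inner faithfulness via Hopf images: the model $\pi$ factors through a minimal closed quantum subgroup $H\subset G$, and inner faithfulness is precisely the assertion $H=G$. Equivalently, and this is the formulation I would actually compute with following the idempotent-state work invoked in the excerpt, $\pi$ is inner faithful if and only if the Cesàro averages $\frac{1}{n}\sum_{k=1}^n\psi^{*k}$ converge to $\int_G$. The strategy is to exploit the universality of $X_G$: since $X_G$ is cut out exactly by the Tannakian relations defining $G$, one expects that for a generic fiber $x\in X_G$ the quantum group generated by $\rho_x$ is already all of $G$, so that no proper $H$ can absorb the entire family. Concretely I would first handle the transitive case, where the flat models live on a space admitting the explicit parametrization of the classical Proposition, exhibit enough fibers to generate $G$, and then bootstrap to the quasi-transitive case by decomposing along orbits. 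The role of the ``transitivity type'' hypothesis is to guarantee that $X_G$ is nonempty and large enough for this genericity argument to run; isolating the minimal such hypothesis is itself part of the problem.

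For part (2), I would invoke the idempotent-state picture: stationarity is equivalent to $\psi=\int_G$, which follows once one shows that $\psi$ is idempotent under convolution and faithful, since the unique faithful idempotent state is the Haar state. The natural route is to compute the mixed moments $\psi(u_{i_1j_1}\cdots u_{i_kj_k})$ by integrating the quasi-flat fibers over $X_G$ and to match them termwise against the Weingarten expansion of $\int_G$. The virtual abelianity hypothesis on $\widehat{G}$ is what should make this tractable: it should let one reduce, via induction from a finite-index cocommutative piece together with Frobenius reciprocity, to the situation of the classical Proposition, where $X_G$ factors as $E_K$ times a discrete Latin-square set $L_{N,K}^G$ and stationarity is already established with respect to the Haar measure on $E_K$. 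In the group dual case $G=\widehat{\Gamma}$ this reduction is exactly the content of Thoma's theorem, which equates virtual abelianity of $\Gamma$ with the good behaviour of its extreme characters; the task is to promote this equivalence to arbitrary discrete quantum $\widehat{G}$.

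The hard part, in both cases, is making the deliberately vague hypotheses precise in a way that is simultaneously verifiable and strong enough. For (1) the genuine obstacle is controlling the Hopf image: ruling out accidental factorizations requires understanding $X_G$ well enough to certify that the fibers collectively generate $G$, and this is delicate for quantum $G$, where no Lie-theoretic coordinates are available. For (2) the crux is the quantum analogue of Thoma's theorem; the classical statement is already subtle, and extending the equivalence between virtual abelianity and stationarity to noncocommutative discrete quantum groups is precisely where the present paper's case studies of the dihedral, metabelian, and $O_2^{-1}$ examples serve as supporting evidence, with a uniform proof remaining out of reach.
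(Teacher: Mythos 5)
The statement you are addressing is labelled a \emph{conjecture} in the paper, and the paper offers no proof of it: the surrounding text only assembles evidence (the classical case via the explicit description $X_G=E_K\times L_{N,K}^G$, the group dual reformulation, the dihedral and Heisenberg computations of Sections 3--5, and the $O_2^{-1}$ model of Section 7) and explicitly points out that part (1) for $G=S_N^+$ alone would already imply the Connes embedding property for $L^\infty(S_N^+)$, i.e.\ would settle a long-standing open problem. Your text is accordingly a strategy sketch rather than a proof, and you say as much in your final paragraph; to that extent there is no substantive disagreement with the paper, which likewise leaves the precise hypotheses unspecified.

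There is, however, one concrete point where your proposed route for part (2) collides with the paper's own results. You suggest that virtual abelianity of $\widehat{G}$ should permit a reduction to the classical picture and hence yield stationarity, citing Thoma's theorem as the mechanism. But Theorem 4.10 of the paper exhibits a metabelian --- hence virtually abelian --- uniform group $\Gamma$ (an extension of $\mathbb Z_3^2$ by $\mathbb Z_2^4$) for which the universal quasi-flat model is \emph{not} stationary with respect to any convex combination of the uniform measures on the twelve components of $X_G$. So ``metabelian'' alone cannot be the correct reading of ``suitable virtual abelianity assumptions'', and any argument along your lines must first isolate a hypothesis that excludes this example while retaining the dihedral and Heisenberg cases of Section 3. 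Note also that Thoma's theorem gives type I-ness of $C^*(\Gamma)$, which by \cite{bch} yields stationary models of a certain special type, but not stationarity of the \emph{universal quasi-flat} model with respect to a natural measure --- that is exactly the gap in which the paper's negative result lives. Your criterion that an idempotent faithful state must be the Haar state is a correct general tool, but nothing in the proposal actually produces the idempotency of $\psi=(tr\otimes\int_X)\pi$, which is where all the work lies in the paper's positive case studies.
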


Regarding the evidence, in the classical case, $G\subset S_N$, everything about $\pi$ is of course known from Proposition 1.6 above. The question left is that of understanding the precise meaning of the ``transitivity type'' condition found there, as well as its interpretation as an ``virtual abelianity'' condition regarding the discrete dual $\widehat{G}$. See \cite{bch}, \cite{bfr}.

In the group dual case, Theorem 1.7 above reformulates everything in terms of usual group representations, and the computations in \cite{bfr} provide some evidence for (1). As for (2), as explained in \cite{bch}, this is related to Thoma's theorem \cite{tho}, which states that a group algebra $C^*(\Gamma)$ is of type I precisely when $\Gamma$ is virtually abelian.

Finally, as explained in \cite{bne}, for $G=S_N^+$ itself the question (1) is a quite difficult one. Indeed, having an inner faithful model for $C(S_N^+)$ would imply that the algebra $L^\infty(S_N^+)$ has the Connes embedding property, therefore solving an old open problem. For more details on this question, and for some strategies for dealing with it, see \cite{bne}, \cite{bcv}.

\section{Capturing results}

A faithful model $\Gamma\subset U_K$, or more generally a faithful model $\Gamma\subset C(X,U_K)$, captures everything about a discrete group $\Gamma$, simply because it captures $\Gamma$ itself. In the discrete quantum group case the same holds, because when $\pi:C^*(\Gamma)\to M_K(C(X))$ is inner faithful, the compact dual $G=\widehat{\Gamma}$ has a simple Tannakian description. See \cite{bb1}.

A more concrete point of view on these questions comes from analysis, by assuming that the model space has a probability measure. We have indeed the following result, which reminds Woronowicz's construction of the Haar state in \cite{wo1}, via a Ces\`aro limit:

\begin{proposition}
A matrix model $\pi:C(G)\to M_K(C(X))$, with $X$ being assumed to be a compact probability space, is inner faithful if and only if
$$\int_G=\lim_{k\to\infty}\frac{1}{k}\sum_{r=1}^k\int_G^r$$
where $\int_G^r=(\varphi\circ\pi)^{*r}$, with $\varphi=tr\otimes\int_X$ being the random matrix trace.
\end{proposition}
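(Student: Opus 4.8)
The plan is to reduce everything to the action of convolution on the Peter--Weyl blocks, and then to read off the two implications from the mean ergodic theorem, the faithfulness of the Haar state, and the Tannakian description of inner faithfulness. I would set $\phi=\varphi\circ\pi$, a state on $C(G)$, and record that $\int_G^r=\phi^{*r}$ is computed blockwise: for the coefficients $u^\alpha_{ij}$ of an irreducible corepresentation $\alpha$, iterating $\Delta(u^\alpha_{ij})=\sum_k u^\alpha_{ik}\otimes u^\alpha_{kj}$ gives $\phi^{*r}(u^\alpha_{ij})=(F_\alpha^r)_{ij}$, where $F_\alpha=(\phi(u^\alpha_{ij}))_{ij}\in M_{d_\alpha}(\mathbb C)$. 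Since $u^\alpha$ is unitary and $\phi$ is a state, $F_\alpha$ is a contraction, so by the finite-dimensional mean ergodic theorem $\frac1k\sum_{r=1}^k F_\alpha^r$ converges to the projection $P_\alpha$ onto $\ker(1-F_\alpha)$. Hence the Ces\`aro limit $\psi:=\lim_k\frac1k\sum_{r=1}^k\phi^{*r}$ exists on the dense Hopf $*$-algebra $\mathcal O(G)$, with $\psi(u^\alpha_{ij})=(P_\alpha)_{ij}$, and extends to a state on $C(G)$ as a pointwise limit of states; the relations $F_\alpha P_\alpha=P_\alpha F_\alpha=P_\alpha^2=P_\alpha$ show that $\psi$ is an idempotent state. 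Since $\psi$ and $\int_G$ are both states, $\psi=\int_G$ if and only if $P_\alpha=0$ for every nontrivial irreducible $\alpha$, and this is the condition I will match against inner faithfulness.

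For the implication ``$\psi=\int_G\Rightarrow$ inner faithful'' I would argue by contradiction using faithfulness of the Haar state. If $\pi$ factored as $C(G)\xrightarrow{q}C(H)\xrightarrow{\pi'}M_K(C(X))$ through a proper closed quantum subgroup $H\subset G$, then $\phi=(\varphi\circ\pi')\circ q$ and, $q$ being a morphism of quantum groups, $\phi^{*r}=(\varphi\circ\pi')^{*r}\circ q$; passing to the Ces\`aro limit gives $\psi=\psi'\circ q$ for a state $\psi'$ on $C(H)$. As $H$ is proper, $q$ is not injective on $\mathcal O(G)$, so there is $0\neq a\in\ker q\cap\mathcal O(G)$; then $\int_G(a^*a)=\psi(a^*a)=\psi'(q(a)^*q(a))=0$, contradicting the faithfulness of $\int_G$ on $\mathcal O(G)$. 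Hence $\pi$ is inner faithful.

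For the converse ``inner faithful $\Rightarrow\psi=\int_G$'' I would pass through the images $U^\alpha=\pi(u^\alpha)\in M_{d_\alpha}(M_K(C(X)))$, which are unitaries with $F_\alpha=(\mathrm{id}\otimes\varphi)(U^\alpha)$. Working in the GNS space of the faithful trace $\varphi=tr\otimes\int_X$, a Cauchy--Schwarz equality argument shows that a unit vector $\xi$ with $F_\alpha\xi=\xi$ forces $U^\alpha(\xi\otimes 1)=\xi\otimes 1$, i.e. $\xi\in\mathrm{Hom}(1,U^\alpha)$. Thus $P_\alpha\neq0$ would produce an invariant vector for $U^\alpha$; but by the Tannakian characterization of inner faithfulness (the Hopf image of $\pi$ being the quantum group whose intertwiner spaces are exactly those of the $U^\alpha$, see \cite{bb1}), inner faithfulness gives $\mathrm{Hom}(1,U^\alpha)=\mathrm{Hom}_G(1,\alpha)=0$ for every nontrivial irreducible $\alpha$. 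Therefore $\ker(1-F_\alpha)=0$, $P_\alpha=0$, and $\psi=\int_G$.

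The routine parts are the existence and idempotency of the Ces\`aro limit (finite-dimensional ergodic theory on each block) and the faithfulness-of-Haar half of the equivalence. The main obstacle is the converse, which amounts to identifying the idempotent state $\psi$ with the Haar state of the Hopf image: for this I rely on the Cauchy--Schwarz/invariant-vector dictionary together with the Tannakian correspondence between inner faithfulness and equality of intertwiner spaces, and this is where I would lean on the idempotent-state and Hopf-image machinery of \cite{ba2}, \cite{bb1}, \cite{bfs}, \cite{wa3}.
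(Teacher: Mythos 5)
Your argument is correct, and it is considerably more detailed than what the paper offers: the paper's ``proof'' of this proposition is essentially a pointer to the literature (the case $X=\{\cdot\}$ in Banica--Franz--Skalski, the general case in Wang's paper), with the one-line summary that the Ces\`aro limit is the Haar functional of the Hopf image. Your write-up fills in that assertion along the same conceptual lines -- idempotent states, Hopf images, Tannakian reconstruction -- but makes the two implications genuinely self-contained: the blockwise reduction $\phi^{*r}(u^\alpha_{ij})=(F_\alpha^r)_{ij}$ plus the finite-dimensional mean ergodic theorem handles existence and idempotency of $\psi$, the faithfulness of the Haar state on $\mathcal O(G)$ gives the easy implication, and the Cauchy--Schwarz equality trick converting a fixed vector of $F_\alpha$ into a fixed vector of $U^\alpha$ is exactly the right bridge to the Tannakian characterization of the Hopf image from \cite{bb1}. (The only step you leave implicit there is passing from $\mathrm{Hom}(1,U^\alpha)$ for an arbitrary irreducible $\alpha$ to the intertwiner spaces $\mathrm{Hom}(U^{\otimes k},U^{\otimes l})$ of the fundamental corepresentation, which is what \cite{bb1} actually controls; this is routine via an isometry $V:\mathbb C^{d_\alpha}\to(\mathbb C^N)^{\otimes k}$ realizing $\alpha\subset u^{\otimes k}$, but worth a sentence.)

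One hypothesis you use silently deserves to be flagged: you call $\varphi=tr\otimes\int_X$ ``the faithful trace,'' and faithfulness of $\int_X$ (i.e.\ full support of the measure on $X$) is genuinely needed -- without it the GNS identity $U^\alpha(\xi\otimes\hat 1)=\xi\otimes\hat 1$ does not return an honest intertwiner in $M_{d_\alpha}(M_K(C(X)))$, and indeed the proposition itself fails if the measure ignores the part of $X$ carrying the inner faithfulness (take $X$ a two-point space with the mass on a fiber where $\pi$ is the counit). The statement in the paper elides this by simply declaring $X$ a ``compact probability space''; your proof is correct under the intended reading that the measure has full support.
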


\begin{proof}
This was proved in \cite{bfs} in the case $X=\{.\}$, by using basic theory from \cite{fsk}, and the general case was recently discussed in \cite{wa3}, by using more advanced tools, the idea being that the Ces\`aro limit in the statement is the Haar functional of the Hopf image.
\end{proof}

In discrete quantum group terms, any property of $\Gamma$ which can be recovered from the explicit knowledge of the Haar functional $\int_{\widehat{\Gamma}}:C^*(\Gamma)\to\mathbb C$ can be ``recaptured'' via the above result from the knowledge of an inner faithful model for $C^*(\Gamma)$. 

In order to formulate some concrete results, we will need:

\begin{proposition}
Assuming that $\pi:C(G)\to M_K(C(X))$ is inner faithful, mapping $u_{ij}\to U_{ij}$, the above truncated integration functionals $\int_G^r$ are given by
$$\int_G^ru_{i_1j_1}\ldots u_{i_pj_p}=(T_p^r)_{i_1\ldots i_p,j_1\ldots j_p}$$
where $T_p\in M_{N^p}(\mathbb C)$ is given by $(T_p)_{i_1\ldots i_p,j_1\ldots j_p}=\left(tr\otimes\int_X\right)(U_{i_1j_1}\ldots U_{i_pj_p})$.
\end{proposition}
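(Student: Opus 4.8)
The plan is to establish the formula by induction on $r$, treating the multi-index $(i_1\ldots i_p,j_1\ldots j_p)$ as a single matrix index ranging over $\{1,\ldots,N\}^p$, so that convolution of functionals on $C(G)$ becomes ordinary matrix multiplication in $M_{N^p}(\mathbb C)$. Write $\psi=\varphi\circ\pi=(tr\otimes\int_X)\circ\pi$ and $u_{\mathbf i\mathbf j}=u_{i_1j_1}\ldots u_{i_pj_p}$ for short. Since $\pi$ is a homomorphism, $U_{i_1j_1}\ldots U_{i_pj_p}=\pi(u_{\mathbf i\mathbf j})$, and therefore $(T_p)_{\mathbf i,\mathbf j}=\psi(u_{\mathbf i\mathbf j})$; this is exactly the $r=1$ case of the asserted identity, since $\int_G^1=\psi$.

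The first key step is a coproduct identity for products of coordinates. Using that $\Delta$ is multiplicative together with the defining formula $\Delta(u_{ij})=\sum_k u_{ik}\otimes u_{kj}$ from Definition 1.1, I would compute
$$\Delta(u_{\mathbf i\mathbf j})=\prod_{l=1}^p\Delta(u_{i_lj_l})=\sum_{k_1,\ldots,k_p}(u_{i_1k_1}\ldots u_{i_pk_p})\otimes(u_{k_1j_1}\ldots u_{k_pj_p})=\sum_{\mathbf k}u_{\mathbf i\mathbf k}\otimes u_{\mathbf k\mathbf j},$$
the sum being over all multi-indices $\mathbf k\in\{1,\ldots,N\}^p$. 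This says precisely that the $N^p$ products $u_{\mathbf i\mathbf j}$ are the entries of a corepresentation, namely the $p$-fold tensor power of the fundamental one.

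With this in hand the inductive step is immediate. Recalling that the truncated functionals are convolution powers, $\int_G^{r+1}=\int_G^r*\psi=(\int_G^r\otimes\psi)\circ\Delta$, I would apply the functional $\int_G^r\otimes\psi$ to the coproduct identity above, obtaining
$$\int_G^{r+1}(u_{\mathbf i\mathbf j})=\sum_{\mathbf k}\int_G^r(u_{\mathbf i\mathbf k})\,\psi(u_{\mathbf k\mathbf j})=\sum_{\mathbf k}(T_p^r)_{\mathbf i,\mathbf k}(T_p)_{\mathbf k,\mathbf j}=(T_p^{r+1})_{\mathbf i,\mathbf j},$$
where the second equality uses the induction hypothesis for $\int_G^r$ and the defining formula for $T_p$, and the third is the definition of matrix multiplication over the index set $\{1,\ldots,N\}^p$. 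This closes the induction.

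I do not expect a genuine obstacle here: the statement is essentially a bookkeeping identity, and inner faithfulness plays no role in its proof (it is inherited from the ambient context, where it is needed only to identify the Ces\`aro limit of the $\int_G^r$ with $\int_G$ in the preceding proposition). The only point requiring a little care is fixing the convention for the convolution product $*$ and matching the base case $r=1$; once $\psi(u_{\mathbf i\mathbf j})=(T_p)_{\mathbf i,\mathbf j}$ is in place, the remaining steps are forced by the multiplicativity of $\Delta$.
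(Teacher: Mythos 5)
Your argument is correct and is precisely the ``elementary computation'' that the paper's one-line proof alludes to: the coproduct identity $\Delta(u_{\mathbf i\mathbf j})=\sum_{\mathbf k}u_{\mathbf i\mathbf k}\otimes u_{\mathbf k\mathbf j}$ turns convolution powers of $\varphi\circ\pi$ into matrix powers of $T_p$, and your induction fills in exactly the details the paper delegates to \cite{bb2}, \cite{bfs}. Your observation that inner faithfulness is not actually used in this computation is also accurate.
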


\begin{proof}
This follows from an elementary computation, by using the definition of the truncated integrals, namely $\int_G^r=(\varphi\circ\pi)^{*r}$, with $\varphi=tr\otimes\int_X$. See \cite{bb2}, \cite{bfs}.
\end{proof}

In the quasi-flat case, that we are interested in, we can write $U_{ij}^x= Proj(\xi_{ij}^x)$, for certain vectors $\xi_{ij}^x\in\mathbb C^K$, satisfying $||\xi_{ij}^x||\in\{0,1\}$ for any $i,j,x$. We obtain:

\begin{proposition}
Assuming that $\pi:C(G)\to M_K(C(X))$ is inner faithful and quasi-flat, mapping $u_{ij}\to Proj(\xi_{ij}^x)$, with $||\xi_{ij}^x||\in\{0,1\}$, the above matrices $T_p$ are given by
$$T_p=\int_XT_p(\xi^x)dx$$
where the matrix $T_p(\xi)\in M_{N^p}(\mathbb C)$, associated to an array $\xi\in M_N(\mathbb C^K)$ is given by 
$$T_p(\xi)_{i_1\ldots i_p,j_1\ldots j_p}=\frac{1}{K}<\xi_{i_1j_1},\xi_{i_2j_2}><\xi_{i_2j_2},\xi_{i_3j_3}>\ldots\ldots<\xi_{i_pj_p},\xi_{i_1j_1}>$$
with the scalar product being the usual one on $\mathbb C^K$, taken linear at right.
\end{proposition}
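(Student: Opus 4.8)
The plan is to substitute the quasi-flat hypothesis into the formula of the preceding proposition and to evaluate the resulting trace of a product of rank-one projections explicitly. By that proposition the entries of $T_p$ are $(T_p)_{i_1\ldots i_p,j_1\ldots j_p}=\left(tr\otimes\int_X\right)(U_{i_1j_1}\ldots U_{i_pj_p})$, and since $\int_X$ is applied last, everything reduces to computing, for each fixed $x\in X$, the normalized trace $tr(U^x_{i_1j_1}\ldots U^x_{i_pj_p})$ of a product of the fiber projections $U^x_{ij}=Proj(\xi^x_{ij})$, and then integrating the outcome over $X$. Thus the claimed identity $T_p=\int_XT_p(\xi^x)dx$ will follow once we establish, for each $x$, the fiberwise identity $tr(U^x_{i_1j_1}\ldots U^x_{i_pj_p})=T_p(\xi^x)_{i_1\ldots i_p,j_1\ldots j_p}$, after which one simply integrates entrywise.

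The core is a short linear-algebra computation, which I would carry out fiberwise and hence suppress $x$ from the notation. Writing the rank-one projection as the operator $Proj(\xi):\eta\mapsto<\xi,\eta>\xi$ (the correct formula precisely because $<\cdot,\cdot>$ is linear at right), a product of such projections telescopes: an easy induction on $p$ gives
$$Proj(\xi_{i_1j_1})\ldots Proj(\xi_{i_pj_p})=\left(\prod_{k=1}^{p-1}<\xi_{i_kj_k},\xi_{i_{k+1}j_{k+1}}>\right)\cdot\big(\eta\mapsto<\xi_{i_pj_p},\eta>\xi_{i_1j_1}\big),$$
the point being that each consecutive pair $Proj(\xi_{i_kj_k})Proj(\xi_{i_{k+1}j_{k+1}})$ contributes the scalar $<\xi_{i_kj_k},\xi_{i_{k+1}j_{k+1}}>$, while the leftmost output direction $\xi_{i_1j_1}$ and the rightmost pairing $<\xi_{i_pj_p},\cdot>$ survive. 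Since $tr$ is the normalized trace, $tr(A)=\frac{1}{K}Tr(A)$ with $Tr$ the usual matrix trace, and the unnormalized trace of the surviving rank-one operator $\eta\mapsto<\xi_{i_pj_p},\eta>\xi_{i_1j_1}$ equals the pairing $<\xi_{i_pj_p},\xi_{i_1j_1}>$. This is exactly the ``wrap-around'' factor closing the index cycle from $p$ back to $1$.

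Collecting the factor $1/K$ together with the $p-1$ consecutive pairings and this final wrap-around pairing yields precisely $T_p(\xi)_{i_1\ldots i_p,j_1\ldots j_p}$, and integrating over $X$ gives the asserted $T_p=\int_XT_p(\xi^x)dx$. The argument is essentially routine, so I expect the only delicate points to be bookkeeping ones: keeping track that $tr$ is normalized (whence the $1/K$) and that $<\cdot,\cdot>$ is conjugate-linear in its first slot (so that $Proj(\xi)$ really is $\eta\mapsto<\xi,\eta>\xi$), and remembering that the trace produces exactly the one extra factor $<\xi_{i_pj_p},\xi_{i_1j_1}>$. A final sanity check concerns the degenerate case $||\xi^x_{ij}||=0$, i.e.\ $u_{ij}=0$: there $Proj(\xi^x_{ij})=0$ and the inner products involving $\xi^x_{ij}$ vanish, so both sides of the fiberwise identity are zero and the formula remains valid; the stated case $||\xi^x_{ij}||\in\{0,1\}$ is therefore handled uniformly.
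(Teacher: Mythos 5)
Your proposal is correct and follows essentially the same route as the paper: both reduce to the fiberwise telescoping identity $Proj(\xi_1)\ldots Proj(\xi_p)(\eta)=<\xi_1,\xi_2>\ldots<\xi_{p-1},\xi_p><\xi_p,\eta>\xi_1$, take the normalized trace to pick up the factor $\frac{1}{K}$ and the wrap-around pairing $<\xi_{i_pj_p},\xi_{i_1j_1}>$, and then integrate over $X$ using the formula of Proposition 2.2. Your explicit remark on the degenerate case $||\xi_{ij}^x||=0$ is a small addition the paper leaves implicit, but it does not change the argument.
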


\begin{proof}
We have the following well-known computation, valid for any vectors $\xi_1,\ldots,\xi_p$ having norms $||\xi_i||\in\{0,1\}$, with the scalar product being linear at right:
\begin{eqnarray*}
&&
Proj(\xi_i)x=<\xi_i,x>\xi_i,\forall i\\
&\implies&
Proj(\xi_1)\ldots Proj(\xi_p)(x)=<\xi_1,\xi_2>\ldots\ldots<\xi_{p-1},\xi_p><\xi_p,x>\xi_1\\
&\implies&Tr(Proj(\xi_1)\ldots Proj(\xi_p))=<\xi_1,\xi_2>\ldots\ldots<\xi_{p-1},\xi_p><\xi_p,\xi_1>
\end{eqnarray*}

Thus, the matrices $T_p$ from Proposition 2.2 can be computed as follows:
\begin{eqnarray*}
(T_p)_{i_1\ldots i_p,j_1\ldots j_p}
&=&\int_Xtr\left(Proj(\xi_{i_1j_1}^x)Proj(\xi_{i_2j_2}^x)\ldots Proj(\xi_{i_pj_p}^x)\right)dx\\
&=&\frac{1}{K}\int_X<\xi_{i_1j_1}^x,\xi_{i_2j_2}^x><\xi_{i_2j_2}^x,\xi_{i_3j_3}^x>\ldots\ldots<\xi_{i_pj_p}^x,\xi_{i_1j_1}^x>dx\\
&=&\int_X(T_p(\xi^x))_{i_1\ldots i_p,j_1\ldots j_p}dx
\end{eqnarray*}

We therefore obtain the formula in the statement. See \cite{bb2}, \cite{bne}.
\end{proof}

Now back to our questions, a number of interesting properties of $\Gamma$ can be recovered from the explicit knowledge of the normalized spectral measure $\mu=law(\chi/N)$ of the main character $\chi=\sum_iu_{ii}$. Recall that this is the probability measure on the real line whose $m^{th}$ moment is 
\begin{equation*}
  \int_G\left(\frac{\chi}{N}\right)^m. 
\end{equation*}

\begin{proposition}
Given a closed subgroup $G\subset O_N^+$, consider the normalized spectral measure $\mu$ of the main character $\chi$. We then have:
\begin{enumerate}
\item $\mu$ is a real probability measure, supported on $[-1,1]$.

\item Kesten criterion: $\Gamma=\widehat{G}$ is amenable precisely when $1\in Supp(\mu)$.

\item If $G$ is finite, its cardinality $|G|=\dim_\mathbb CC(G)$ is given by $|G|=\frac{1}{\mu(1)}$.
\end{enumerate}
\end{proposition}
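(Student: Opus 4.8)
The plan is to handle the three claims in order, the unifying tool being that the inclusion $G\subset O_N^+$ forces the entries $u_{ij}$ to be self-adjoint and the matrix $u\in M_N(C(G))$ to be a (genuine) unitary. For \textbf{(1)} I would first note that $\chi=\sum_iu_{ii}$ is then self-adjoint, so $\mu=law(\chi/N)$ is an honest measure on $\mathbb R$, and it is a probability measure because $\int_G$ is a state. For the support, I would compress: since $\|u\|=1$ in $M_N(C(G))$, each diagonal entry satisfies $\|u_{ii}\|\le\|u\|=1$, whence $\|\chi\|\le\sum_i\|u_{ii}\|\le N$ and $sp(\chi/N)\subseteq[-1,1]$. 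As $Supp(\mu)$ equals the spectrum of $\chi/N$ computed in the GNS representation of the (faithful) reduced Haar state, this gives $Supp(\mu)\subseteq[-1,1]$.

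For \textbf{(2)} the identity $Supp(\mu)=\tfrac1N\,sp_{red}(\chi)$ turns the statement into: $\widehat G$ is amenable iff $N\in sp_{red}(\chi)$. The easy implication is that amenability means the counit $\varepsilon$ descends to $C_{red}(G)$; since $\varepsilon(\chi)=\sum_i\varepsilon(u_{ii})=N$ and a character always evaluates into the spectrum, $N\in sp_{red}(\chi)$. For the converse I would use that $N=\|\chi\|$ is the top of the spectrum of a self-adjoint element, so there exist unit vectors $\xi_n$ in $L^2(G)$ with $(\chi-N)\xi_n\to0$. Writing $N-\chi=\sum_i(1-u_{ii})$ as a sum of positives forces $\langle u_{ii}\xi_n,\xi_n\rangle\to1$ for each $i$, and the estimate $\|u_{ii}\xi_n-\xi_n\|^2\le 1-2\langle u_{ii}\xi_n,\xi_n\rangle+1\to0$ then gives $u_{ii}\xi_n\approx\xi_n$; exploiting the orthonormality of the rows and columns of $u$ propagates this to $\|u_{ij}\xi_n-\delta_{ij}\xi_n\|\to0$. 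A weak-$*$ limit of the vector states $\langle\,\cdot\,\xi_n,\xi_n\rangle$ is then multiplicative on the generators, i.e. a character extending $\varepsilon$ to $C_{red}(G)$, which is amenability.

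For \textbf{(3)} I would observe that $\mu(1)$ is the mass of $\mu$ at the top point, equal to $\int_Gp$ where $p$ is the spectral projection of $\chi$ at its maximal value $N$. Since $1-u_{ii}\ge0$ and $u$ is orthogonal, $\chi=N$ can hold only where every $u_{ii}=1$, and $\sum_ju_{ij}u_{ij}^*=1$ then forces $u=\mathrm{id}_N$ there; thus $p$ is the group-like projection marking the identity (the quantum analogue of $\delta_e$), nonzero since $\varepsilon(p)=1$. One checks in the classical faithful case $G\subset O_N$ that $\chi(g)=N$ forces $g=e$, so indeed $p=\delta_e$ and $\int_Gp=1/|G|$. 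It then remains to prove the quantum normalization $\int_Gp=1/\dim_\mathbb C C(G)$, which I would derive by comparing the Haar trace on the finite-dimensional $C^*$-Hopf algebra $C(G)$ with the canonical trace of its regular representation.

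The routine content is \textbf{(1)} and the easy half of \textbf{(2)}. The main obstacles are the hard half of the Kesten criterion—manufacturing the counit on $C_{red}(G)$ out of the approximate eigenvectors $\xi_n$, where the orthogonality relations of $u$ must be used carefully—and, in \textbf{(3)}, the clean identification of $p$ with the identity-point projection together with the equality $\int_Gp=1/\dim_\mathbb C C(G)$, which is precisely where the finite-quantum-group Haar/cointegral theory genuinely enters rather than the classical picture.
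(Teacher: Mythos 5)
Your proposal is correct. For (1) you argue exactly as the paper does: self-adjointness of the $u_{ii}$ plus $\|u_{ii}\|\le 1$ make $\chi/N$ a self-adjoint contraction. For (2) the paper only gestures at the quantum Kesten criterion, whereas you reconstruct its standard proof: the easy direction via $\varepsilon(\chi)=N$ and spectra of characters, and the hard direction by taking approximate eigenvectors $\xi_n$ at the top of the spectrum, using positivity of each $1-u_{ii}$ and the orthogonality relations $\sum_j u_{ij}^2=1$ to force $\|(u_{ij}-\delta_{ij})\xi_n\|\to 0$, and then passing to a weak-$*$ limit of the vector states to factor $\varepsilon$ through $C_{red}(G)$; this is precisely the B\'edos--Murphy--Tuset argument hidden behind the paper's citation, and it is sound. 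The genuine divergence is in (3): the paper computes $\mu(1)$ as a return probability on the principal graph, $\mu(1)=\lim_p (A^p)_{11}/N^p=\xi_1^2/\|\xi\|^2=1/\sum_r\dim(r)^2$, a Perron--Frobenius argument on the fusion data; you instead identify the spectral projection $p$ of $\chi$ at the value $N$ with the support projection of the counit (correct: on the range of $p$ each $u_{ii}$ acts as $1$ because the positive elements $1-u_{ii}$ sum to $N-\chi$, and unitarity then kills the off-diagonal entries, so the corresponding block of $C(G)$ is the one-dimensional block of $\varepsilon$), and then invoke $h(p_\varepsilon)=1/\dim_{\mathbb C}C(G)$, which indeed follows from the formula $h=\frac{1}{\dim C(G)}\,\mathrm{Tr}\circ L$ for the Haar state of a finite-dimensional Kac algebra (a minimal projection in an $n_k\times n_k$ block has mass $n_k/\dim C(G)$, and $n_k=1$ for the counit block). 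Both routes are valid: the paper's keeps (2) and (3) inside one Perron--Frobenius framework, while yours is more self-contained at the Hopf-algebra level, at the cost of the finite-quantum-group integral formula, which you correctly flag as the one remaining external input.
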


\begin{proof}
All these results are well-known, the idea being as follows:

(1) This is clear from $u_{ii}=u_{ii}^*$, and from $||u_{ii}||\leq1$ for any $i$, because these conditions tell us that the operator $\chi/N$ is self-adjoint, and of norm $\leq1$.

(2) This is indeed the quantum version of the Kesten criterion \cite{kes}, the idea being that $1\in Supp(\mu)$ is equivalent to having a factorization of the counit $\varepsilon:C^*_{red}(\Gamma)\to\mathbb C$.

(3) This is well-known too. If we denote by $F$ the principal graph, with adjacency matrix $A\in M_M(0,1)$, where $M=|F|$, and Perron-Frobenius vector $\xi\in\mathbb R^M$, we have:
$$\mu(1)=\lim_{p\to\infty}\frac{(A^p)_{11}}{N^p}=\frac{\xi_1^2}{||\xi||^2}=\frac{1}{\sum_r\dim(r)^2}=\frac{1}{|G|}$$

Here, and in the above two proofs as well, we have used a number of standard facts, and we refer to \cite{bmt}, \cite{ntu} for more details on all this material.
\end{proof}

Regarding now the explicit computation of $\mu$, a certain moment formula comes by putting together Proposition 2.1, Proposition 2.2, Proposition 2.3. However, as explained in \cite{bb2}, \cite{bne}, one can do better than that, with a more conceptual result, as follows:

\begin{theorem}
Given an inner faithful quasi-flat model $\pi:C(G)\to M_K(C(X))$, mapping $u_{ij}\to Proj(\xi_{ij}^x)$ with $||\xi_{ij}^x||\in\{0,1\}$, the law of the normalized character $\chi/K$ with respect to the truncated integral $\int_G^r$ coincides with that of the Gram matrix of the vectors
$$\xi_{i_1\ldots i_r}^x=\frac{1}{\sqrt{K}}\cdot\xi^{x_1}_{i_1i_2}\otimes\xi^{x_2}_{i_2i_3}\otimes\ldots\otimes \xi^{x_r}_{i_ri_1}$$
with respect to the normalized matrix trace, and to the integration functional on $X^r$.
\end{theorem}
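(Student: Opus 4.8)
The plan is to compare the two laws moment by moment. Both are compactly supported probability measures on $\mathbb R$ — the first because $\int_G^r=(\varphi\circ\pi)^{*r}$ is a state and $\chi/K=\frac1K\sum_iu_{ii}$ is self-adjoint of norm $\leq N/K$, the second because each fibered Gram matrix is positive and bounded — so by determinacy of the moment problem it suffices to match the $m$-th moments for all $m\in\mathbb N$. First I would reduce the left-hand side to a matrix trace. Writing $\chi=\sum_iu_{ii}$ and applying Proposition 2.2 with $p=m$ and all indices on the diagonal gives
$$\int_G^r\left(\frac{\chi}{K}\right)^m=\frac{1}{K^m}\sum_{i_1,\ldots,i_m}(T_m^r)_{i_1\ldots i_m,i_1\ldots i_m}=\frac{1}{K^m}\,Tr(T_m^r).$$
Since $T_m=\int_XT_m(\xi^x)\,dx$ by Proposition 2.3 and matrix multiplication is bilinear, Fubini yields $T_m^r=\int_{X^r}T_m(\xi^{x_1})\cdots T_m(\xi^{x_r})\,dx$, so that
$$\int_G^r\left(\frac{\chi}{K}\right)^m=\frac{1}{K^m}\int_{X^r}Tr\big(T_m(\xi^{x_1})\cdots T_m(\xi^{x_r})\big)\,dx.$$
Expanding this trace over $r$ multi-indices $I^{(1)},\ldots,I^{(r)}\in\{1,\ldots,N\}^m$ and substituting the explicit entries of $T_m(\xi)$ from Proposition 2.3 turns it into a sum over a rectangular array $\{i_c^{(s)}:c\in\mathbb Z_m,\ s\in\mathbb Z_r\}$ of a product of $mr$ scalar products $\langle\xi^{x_s}_{i_c^{(s)}i_c^{(s+1)}},\xi^{x_s}_{i_{c+1}^{(s)}i_{c+1}^{(s+1)}}\rangle$, with global prefactor $1/K^{m+r}$.

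I would then expand the right-hand side in the same fashion. For fixed $x=(x_1,\ldots,x_r)\in X^r$ the Gram matrix $G^x$ of the vectors $\xi^x_{i_1\ldots i_r}=\frac1{\sqrt K}\,\xi^{x_1}_{i_1i_2}\otimes\cdots\otimes\xi^{x_r}_{i_ri_1}$ has entries $G^x_{I,J}=\frac1K\prod_{s=1}^r\langle\xi^{x_s}_{i_si_{s+1}},\xi^{x_s}_{j_sj_{s+1}}\rangle$ (indices cyclic mod $r$), and computing $Tr((G^x)^m)$ by expanding over $m$ multi-indices $I^{(1)},\ldots,I^{(m)}\in\{1,\ldots,N\}^r$ produces a sum over an array $\{i_s^{(t)}:s\in\mathbb Z_r,\ t\in\mathbb Z_m\}$ of scalar products of exactly the same shape, carrying the prefactor $1/K^m$. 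The key step is then the term-by-term matching of the two multi-sums under the transposition of the $m\times r$ array, $i_c^{(s)}\leftrightarrow i_s^{(c)}$: this bijection carries each character factor onto the corresponding Gram factor, whence
$$\int_G^r\left(\frac{\chi}{K}\right)^m=\int_{X^r}\frac{1}{K^r}\,Tr\big((G^x)^m\big)\,dx.$$

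The main obstacle is purely organisational: one must track two interleaved cyclic structures — the length-$m$ cycle from the $m$-th moment and the length-$r$ cycle from the convolution power $\int_G^r$ — and verify that transposing the array is precisely the right matching, with the variables $x_1,\ldots,x_r$ kept attached to the $r$-direction so that the $r$ tensor legs of $\xi^x_{i_1\ldots i_r}$ correspond to the $r$ factors $T_m(\xi^{x_s})$ and the $X^r$-integration lines up. A final point worth flagging is the normalization: the identity forces the Gram trace to be normalized by $K^r$ rather than the naive $N^r$, which is exactly what the factor $1/\sqrt K$ in $\xi^x_{i_1\ldots i_r}$ and the $1/K$ in $\chi/K$ conspire to produce; equivalently, the ``normalized matrix trace'' here is that of the companion operator acting on $(\mathbb C^K)^{\otimes r}$.
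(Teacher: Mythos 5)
Your argument is correct and follows essentially the same route as the paper's proof: reduce the $m$-th moment of $\chi/K$ under $\int_G^r$ to $\frac{1}{K^m}\,Tr(T_m^r)$ via Proposition 2.2, expand using the fiberwise formula of Proposition 2.3, and match the resulting sum of products of scalar products with the expansion of $Tr((G^x)^m)$ by transposing the $m\times r$ index array. Your added remarks on moment determinacy and on the $K^r$ (rather than $N^r$) normalization of the Gram trace are correct clarifications of points the paper leaves implicit.
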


\begin{proof}
This was proved in \cite{bb2}, \cite{bne} under various supplementary assumptions on the model, which are actually not needed. First of all, by using Proposition 2.2 above, the moments $C_p$ of the measure that we are interested in are given by:
$$C_p=\frac{1}{K^p}\int_G^r\left(\sum_iu_{ii}\right)^p=\frac{1}{K^p}\sum_{i_1\ldots i_p}(T_p^r)_{i_1\ldots i_p,i_1\ldots i_p}=\frac{1}{K^p}\cdot Tr(T_p^r)$$

The trace on the right is given by the following formula:
$$Tr(T_p^r)=\sum_{i_1^1\ldots i_p^r}(T_p)_{i_1^1\ldots i_p^1,i_1^2\ldots i_p^2}\ldots\ldots(T_p)_{i_1^r\ldots i_p^r,i_1^1\ldots i_p^1}$$

In view of the formula in Proposition 2.3, this quantity will expand in terms of the matrices $T_p(\xi)$ constructed there. To be more precise, we have:
$$Tr(T_p^r)=\int_{X^r}\sum_{i_1^1\ldots i_p^r}T_p(\xi^{x_1})_{i_1^1\ldots i_p^1,i_1^2\ldots i_p^2}\ldots\ldots T_p(\xi^{x_r})_{i_1^r\ldots i_p^r,i_1^1\ldots i_p^1}\,dx$$

By using now the explicit formula of each $T_p(\xi)$, from Proposition 2.3, we have:
\begin{eqnarray*}
Tr(T_p^r)
&=&\frac{1}{K^r}\int_{X^r}\sum_{i_1^1\ldots i_p^r}<\xi_{i_1^1i_1^2}^{x_1},\xi_{i_2^1i_2^2}^{x_1}>\ldots\ldots<\xi_{i_p^1i_p^2}^{x_1},\xi_{i_1^1i_1^2}^{x_1}>\\
&&\hskip52.8mm\ldots\\
&&\hskip22.4mm<\xi_{i_1^ri_1^1}^{x_r},\xi_{i_2^ri_2^1}^{x_r}>\ldots\ldots<\xi_{i_p^ri_p^1}^{x_r},\xi_{i_1^ri_1^1}^{x_r}>dx
\end{eqnarray*}

By changing the order of the summation, we can write this formula as:
\begin{eqnarray*}
Tr(T_p^r)
&=&\frac{1}{K^r}\int_{X^r}\sum_{i_1^1\ldots i_p^r}<\xi_{i_1^1i_1^2}^{x_1},\xi_{i_2^1i_2^2}^{x_1}>\ldots\ldots<\xi_{i_1^ri_1^1}^{x_r},\xi_{i_2^ri_2^1}^{x_r}>\\
&&\hskip52.8mm\ldots\\
&&\hskip19.4mm<\xi_{i_p^1i_p^2}^{x_1},\xi_{i_1^1i_1^2}^{x_1}>\ldots\ldots<\xi_{i_p^ri_p^1}^{x_r},\xi_{i_1^ri_1^1}^{x_r}>dx
\end{eqnarray*}

But this latter formula can be written as follows:
\begin{eqnarray*}
Tr(T_p^r)
&=&K^{p-r}\int_{X^r}\sum_{i_1^1\ldots i_p^r}\frac{1}{K}<\xi_{i_1^1i_1^2}^{x_1}\otimes\ldots\otimes\xi_{i_1^ri_1^1}^{x_r}\ ,\ \xi_{i_2^1i_2^2}^{x_1}\otimes\ldots\otimes \xi_{i_2^ri_2^1}^{x_r}>\\
&&\hskip55mm\ldots\\
&&\hskip15mm\frac{1}{K}<\xi_{i_p^1i_p^2}^{x_1}\otimes\ldots\otimes \xi_{i_p^ri_p^1}^{x_r}\ ,\ \xi_{i_1^1i_1^2}^{x_1}\otimes\ldots\otimes\xi_{i_1^ri_1^1}^{x_r}>dx
\end{eqnarray*}

In terms of the vectors in the statement, and of their Gram matrix $G_r^x$, we obtain:
\begin{eqnarray*}
Tr(T_p^r)
&=&K^{p-r}\int_{X^r}\sum_{i_1^1\ldots i_p^r}<\xi_{i_1^1\ldots i_1^r}^x,\xi_{i_2^1\ldots i_2^r}^x>\ldots\ldots<\xi_{i_p^1\ldots i_p^r}^x,\xi_{i_1^1\ldots i_1^r}^x>dx\\
&=&K^{p-r}\int_{X^r}\sum_{i_1^1\ldots i_p^r}(G_r^x)_{i_1^1\ldots i_1^r,i_2^1\ldots i_2^r}\ldots\ldots(G_r^x)_{i_p^1\ldots i_p^r,i_1^1\ldots i_1^r}\,dx\\
&=&K^{p-r}\int_{X^r}Tr((G_r^x)^p)dx
\end{eqnarray*}

Summarizing, the moments of the measure in the statement are given by:
$$C_p=\frac{1}{K^r}\int_{X^r}Tr((G_r^x)^p)dx=\left(tr\otimes\int_{X^r}\right)\left(G_r^p\right)$$

This gives the formula in the statement of the theorem.
\end{proof}

As a conclusion, various properties of $\Gamma$ can be recovered by plugging the Gram matrix law in Theorem 2.5, via a Ces\`aro limiting procedure as in Proposition 2.1, into the general criteria from Proposition 2.4. For some applications of this method, see \cite{bb2}, \cite{bne}.

In principle, our ``capturing'' philosophy should have as well some other applications. The most interesting questions are perhaps those related to the growth of $\Gamma$:

\begin{definition}
Given $G\subset U_N^+$, with fundamental corepresentation satisfying $1\in u\sim\bar{u}$, the growth function of its dual $\Gamma=\widehat{G}$ is the series $f(z)=\sum_{n\geq0}v_nz^n$, where
$$v_n=\sum_{r\in Irr(G),|r|\leq n}\dim(r)^2$$
with the ``length'' function being defined as $|r|=\inf\left\{l\in\mathbb N|r\in u^{\otimes l}\right\}$.
\end{definition}

As an illustration, given a discrete group $\Gamma=<g_1,\ldots,g_N>$, we can enlarge if needed the set $S=\{g_1,\ldots,g_N\}$, as to have $1\in S=S^{-1}$. Thus, we obtain an embedding $\widehat{\Gamma}\subset U_N^+$, with $u=diag(g_1,\ldots,g_N)$ satisfying $1\in u\sim\bar{u}$. We have then $Irr(\widehat{\Gamma})=\Gamma$, and the length function is the usual one on $\Gamma$, with respect to $S$. Thus, the above numbers $v_n$ are the volumes of the corresponding balls, and $f$ is their generating series. See \cite{dpr}.

It is quite unclear on how to capture the growth, from the knowledge of an inner faithful model. However, some evidence for this comes from Gromov's result in \cite{gro}, stating that polynomial growth is equivalent to being virtually nilpotent, and from the related work in \cite{dpr}. More precisely, the recurrence of the random walk on a finitely generated discrete group is equivalent to quadratic growth. In turn, Gromov's theorem \cite{gro} implies that such groups are finite extensions of $\mathbb{Z}^n$ for $n\le 2$ (see e.g. \cite[$\S$VI.6]{var}) 

This circle of ideas thus suggests strong connections between growth and the random walk invariants, and the latter fall into the class of quantities that we can ``recapture'' by using our methods. 

Another interesting question is that of recapturing the diagonal quotient of $\Gamma$, whose dual is the diagonal subgroup of $G=\widehat{\Gamma}$, from the knowledge of an inner faithful model. Once again, we cannot quite expect here to have exact results, but rather asymptotic ones. Some interesting work on a number of related topics was recently done in \cite{fls}.

Summarizing, our opinion on these questions would be as follows: 

\begin{heuristic}
The following properties of a discrete quantum group $\Gamma$ can be recaptured via analytic methods, from the knowledge of an inner faithful model for $C^*(\Gamma)$:
\begin{enumerate}
\item The asymptotic behavior of the growth invariants.

\item The asymptotic behavior of the random walk on the diagonal quotient.
\end{enumerate}
\end{heuristic}

Finally, by exiting now the inner faithfulness setting, one interesting question is whether the property of being residually finite from \cite{chi} has or not a probabilistic formulation, in terms of some associated universal matrix models. Once again, having such a result would be probably very useful, but for the moment, we have no idea here.

\section{Stationarity questions}

We discuss now stationarity questions for the universal quasi-flat models of the group duals. To be more precise, we consider uniform groups $\mathbb Z_K^{*M}\to\Gamma\to\mathbb Z_K^M$ which are virtually abelian, and our aim is that of computing the associated model space $X_G$, and then proving that the universal model is stationary. This question is quite interesting, because we will have here a substantial improvement of Thoma's theorem \cite{tho}.

The first question that we study is the computation of the model space. We would like for instance to understand if this space splits as a union of homogeneous spaces.

As a first remark, in the finite case we have the following result:

\begin{proposition}
When the intermediate quotient $\mathbb Z_K^{*M}\to\Gamma\to\mathbb Z_K^M$ is finite, $X_G$ is a union of homogeneous spaces, for certain actions of the unitary group $U_K$.
\end{proposition}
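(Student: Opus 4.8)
The plan is to realize $X_G$ concretely as the variety of quasi-flat representations of $\Gamma$, equipped with the conjugation action of $U_K$, and then to cut this variety into $U_K$-orbits using the representation theory of the finite group $\Gamma$. By the reformulation of quasi-flat models as quasi-flat representations (Theorem 1.7 together with Definition 1.8), the universal model space $X_G$ is canonically the space of homomorphisms $\rho:\Gamma\to U_K$ for which each $\rho(g_i)$ has spectrum equal to the full set of $K$-th roots of unity, each appearing exactly once. Sending $\rho$ to the tuple $(\rho(g_1),\ldots,\rho(g_M))$ realizes $X_G$ as a closed subset of $U_K^M$, hence as a compact space, carrying the tautological representation. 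The group $U_K$ acts by simultaneous conjugation, $(U\cdot\rho)(g)=U\rho(g)U^*$, and this preserves $X_G$, since conjugation respects the homomorphism property and leaves the spectrum of each $\rho(g_i)$ unchanged, thus preserving quasi-flatness.

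First I would invoke the classification of the unitary representations of a finite group: two representations $\rho,\rho':\Gamma\to U_K$ lie in the same $U_K$-orbit if and only if they are unitarily equivalent, which happens exactly when they share the same multiplicity vector $(m_1,\ldots,m_s)$ over the irreducibles $\pi_1,\ldots,\pi_s$ of $\Gamma$, subject to $\sum_\alpha m_\alpha\dim\pi_\alpha=K$. Because $\Gamma$ is finite, only finitely many such multiplicity vectors occur, so $\mathrm{Hom}(\Gamma,U_K)$ is partitioned into finitely many $U_K$-orbits, one per decomposition type. By Schur's lemma the stabilizer of a representation of type $(m_\alpha)$ is the unitary group of its commutant, namely the block subgroup $\prod_\alpha U_{m_\alpha}\subset U_K$, a closed compact subgroup; hence each orbit is a genuine homogeneous space $U_K/\prod_\alpha U_{m_\alpha}$.

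It then remains to check that passing to the quasi-flat locus respects this decomposition. Since the eigenvalue data of each $\rho(g_i)$ is a spectral invariant, it is constant along every $U_K$-orbit and is determined by the decomposition type alone; consequently $X_G$ is precisely the union of those orbits whose type meets the quasi-flatness condition on all $M$ generators. This exhibits $X_G$ as a finite disjoint union of homogeneous spaces $U_K/\prod_\alpha U_{m_\alpha}$, which is the assertion. As a sanity check, for $\Gamma=\mathbb Z_K$ the single admissible type assigns each of the $K$ characters multiplicity one, so $X_G$ is the single orbit $U_K/T^K$, the full flag manifold, in agreement with the description of the analogous space $E_K$ in Proposition 1.6.

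The step requiring the most care is the orbit theorem, namely that each decomposition type is a single $U_K$-orbit rather than several: this is the statement that unitary equivalence of representations of a finite group is implemented by one conjugating unitary, which is standard (complete reducibility together with Schur), but it is the crux, since it is what collapses an entire stratum of $X_G$ into a single homogeneous piece. A secondary point is that, $U_K$ being compact, each orbit is closed; the finitely many orbits are therefore pairwise disjoint closed sets whose union is $X_G$, so each is in fact clopen, and the decomposition is a genuine topological disjoint union rather than a stratification with lower-dimensional limit strata.
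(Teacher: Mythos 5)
Your proof is correct and rests on the same underlying fact as the paper's (for a finite group $\Gamma$ there are only finitely many unitary equivalence classes of $K$-dimensional representations, and each class is a single $U_K$-conjugation orbit, hence a homogeneous space); the paper merely phrases this as a rigidity statement, that nearby representations must be conjugate. Your version usefully makes explicit what the paper leaves implicit, namely the identification of $X_G$ with the quasi-flat locus of $\mathrm{Hom}(\Gamma,U_K)$, the stabilizers $\prod_\alpha U_{m_\alpha}$, and the fact that the finitely many closed orbits are therefore clopen.
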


\begin{proof}
This follows indeed from the fact that the space of $K$-dimensional unitary representations of $\Gamma$ is discrete, and so when two representations are sufficiently close, they must belong to the same orbit, under the conjugation action of $U_K$.
\end{proof}

We now specialize to two-generator finite homogeneous groups, examining first the case when the generators are involutions. These groups are all well-known, as follows:

\begin{proposition}
The uniform groups with two order $2$ generators are:
\begin{enumerate}
\item The dihedral groups $D_n$, with $n$ even.

\item The infinite dihedral group $D_\infty$.
\end{enumerate} 
\end{proposition}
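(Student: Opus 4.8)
The plan is to exploit the identification $\mathbb{Z}_2 * \mathbb{Z}_2 \cong D_\infty$, together with careful bookkeeping of what the word \emph{uniform} demands. By definition, a uniform group with two order $2$ generators is an intermediate quotient $\mathbb{Z}_2 * \mathbb{Z}_2 \to \Gamma \to \mathbb{Z}_2 \times \mathbb{Z}_2$, in which the two generators $g_1,g_2$ have order exactly $2$, this injectivity being the condition isolated in Theorem 1.7(1). Writing $\mathbb{Z}_2 * \mathbb{Z}_2 = \langle g_1,g_2 \mid g_1^2 = g_2^2 = 1\rangle \cong D_\infty$, any such $\Gamma$ is a quotient $D_\infty/N$, and the first task is to describe these quotients.

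First I would set $r = g_1 g_2$ and observe that $g_1 r g_1^{-1} = g_2 g_1 = r^{-1}$, so $\langle r\rangle$ is cyclic and normalized by $g_1$, hence normal of index at most $2$ in any quotient. Provided the images of $g_1$ and $g_2$ stay distinct --- which the surjection onto $\mathbb{Z}_2 \times \mathbb{Z}_2$ guarantees, since otherwise $\bar g_1 \in \langle \bar r\rangle$ and $\Gamma$ would be cyclic, contradicting non-cyclicity of $\mathbb{Z}_2 \times \mathbb{Z}_2$ --- the image of $r$ is nontrivial and $\Gamma = \langle \bar r\rangle \rtimes \langle \bar g_1\rangle$ is dihedral: finite $D_n$ when $\bar r$ has finite order $n$, and $D_\infty$ when $\bar r$ has infinite order. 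This already forces every uniform group of the desired type to be dihedral; it remains to decide which dihedral groups occur.

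The decisive step is the parity constraint coming from uniformity. The requirement that $\Gamma$ surject onto $\mathbb{Z}_2 \times \mathbb{Z}_2$ compatibly with the abelianization map is equivalent to $N \subseteq [D_\infty, D_\infty]$; and since $g_1 r g_1^{-1} = r^{-1}$ gives $[g_1, r] = r^{-2}$, one computes $[D_\infty, D_\infty] = \langle r^2\rangle \cong \mathbb{Z}$. The normal subgroups of $D_\infty$ contained in $\langle r^2\rangle$ are exactly $\{1\}$ and $\langle r^{2m}\rangle$ for $m \geq 1$, yielding $\Gamma = D_\infty$ and $\Gamma = D_{2m}$ respectively. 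Equivalently, one checks directly that $D_n^{ab} = \mathbb{Z}_2 \times \mathbb{Z}_2$ precisely when $n$ is even, whereas for odd $n$ the relation $r^2 = 1$ imposed by abelianization collapses $\bar r$ and leaves only $\mathbb{Z}_2$. This produces exactly the dihedral groups $D_n$ with $n$ even, together with $D_\infty$.

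The computation is entirely elementary; the only point requiring genuine care --- and the place where the statement truly uses \emph{uniform} rather than merely \emph{generated by two involutions} --- is this last parity argument. Dropping the surjection onto $\mathbb{Z}_2 \times \mathbb{Z}_2$ would additionally admit the odd dihedral groups and the degenerate quotient $\mathbb{Z}_2$ where $\bar g_1 = \bar g_2$, so the main obstacle is simply to recognize that uniformity is precisely the condition excluding these, via the identification $[D_\infty, D_\infty] = \langle r^2\rangle$.
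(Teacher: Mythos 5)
Your proof is correct and follows essentially the same route as the paper's: identify $\mathbb{Z}_2*\mathbb{Z}_2$ with $D_\infty$, classify the relevant normal subgroups as the $\langle r^{2m}\rangle\subseteq\langle r\rangle\cong\mathbb{Z}$, and use the surjection onto $\mathbb{Z}_2^2$ as the parity constraint singling out the even dihedral groups. The only point you leave implicit is that the resulting quotients are genuinely \emph{uniform}, i.e.\ that the swap $g_1\leftrightarrow g_2$ descends (it sends $r$ to $r^{-1}$ and so preserves each $\langle r^{2m}\rangle$); the paper disposes of this in one closing sentence, so this is a negligible omission.
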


\begin{proof}
We use the canonical identification $D_\infty=\mathbb Z_2*\mathbb Z_2$. A group $\Gamma=<g_1,g_2>$ as above fits into a sequence that identifies the pairs of respective generators, as follows:
$$D_\infty\to\Gamma\to\mathbb Z_2^2$$

By using now the canonical identification $D_\infty=\mathbb Z\rtimes\mathbb Z_2$, the normal subgroups of $D_\infty$ are the subgroups $n\mathbb Z\subset\mathbb Z\subset \mathbb Z\rtimes\mathbb Z_2$ with $n\in\mathbb N\cup\{\infty\}$, and these produce the quotients $D_n=\mathbb Z_n\rtimes\mathbb Z_2$. Furthermore, such a quotient surjects onto $\mathbb Z_2^2$ by sending the generators of $\mathbb Z_n,\mathbb Z_2$ to non-trivial distinct elements of $\mathbb Z_2^2$ precisely when $n$ is even. 

Finally, the fact that the groups that we found, namely $D_n$ with $n\in 2\mathbb N\cup\{\infty\}$, are all uniform is clear. Indeed, such a dihedral group $D_n$ admits an automorphism interchanging the generator of $\mathbb Z_2$ and the generator of the other copy of $\mathbb Z_2$.
\end{proof}

The case $D_\infty=\mathbb Z_2*\mathbb Z_2$ being discussed in \cite{bfr}, we restrict now the attention to the finite case. Our result here is as follows:

\begin{proposition}
Let $\Gamma=D_n$ with $n$ even, and set $G=\widehat{\Gamma}$. Then:
\begin{enumerate}
\item $X_G$ is a union of homogeneous spaces $X_+$, $X_-$ and $X_\chi$ labeled by the $\frac{n}{2}-1$ characters of the two-dimensional representations of $D_n$.

\item The model is stationary with respect to the probability measure that is uniform on each component $X_\bullet$ and assigns equal weights to the $X_\chi$ and $X_+\cup X_-$.
\end{enumerate}
\end{proposition}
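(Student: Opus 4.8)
The plan is to present $X_G$ explicitly as the variety of quasi-flat matrix representations of $D_n$, and then read off both assertions from the representation theory of $D_n$. Since $\Gamma=D_n$ is a quotient of $D_\infty=\mathbb Z_2*\mathbb Z_2$, here $K=2$, and a quasi-flat representation is a pair $(A,B)=(\rho(g_1),\rho(g_2))\in U_2\times U_2$ of self-adjoint unitaries with $A^2=B^2=1$ whose eigenvalues are uniformly distributed over $\{1,-1\}$; that is, $A,B$ are trace-zero reflections. As the only remaining relation of $D_n$ is $(g_1g_2)^n=1$, we obtain
$$X_G=\left\{(A,B)\in U_2^2\ \middle|\ A=A^*,\ B=B^*,\ A^2=B^2=1,\ tr(A)=tr(B)=0,\ (AB)^n=1\right\}.$$

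First I would classify the $U_2$-conjugation orbits on $X_G$ by means of the single invariant $tr(AB)$. As a product of two trace-zero reflections, $AB$ lies in $SU_2$ and has eigenvalues $e^{\pm i\theta}$; the relation $(AB)^n=1$ forces $\theta\in\{2\pi k/n\}$, and conversely $tr(AB)=2\cos\theta$ together with $\det(AB)=1$ recovers the eigenvalues. A short argument shows that the pair is reducible exactly when $\theta\in\{0,\pi\}$: a common eigenvector of $A$ and $B$ would force a real eigenvalue of $AB$. The value $\theta=0$ gives $B=A$ and $\rho\cong\chi_{++}\oplus\chi_{--}$ (the component $X_+$), the value $\theta=\pi$ gives $B=-A$ and $\rho\cong\chi_{+-}\oplus\chi_{-+}$ (the component $X_-$), and for $\theta=2\pi h/n$ with $1\leq h\leq n/2-1$ the pair is irreducible and, being a two-dimensional irreducible representation, is determined up to isomorphism — hence up to conjugacy — by its character $\chi$, giving the component $X_\chi$. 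These are precisely the $n/2-1$ two-dimensional irreducible representations, with $X_+$ and $X_-$ appearing as the two degenerate endpoints $h=0$ and $h=n/2$ at which the representation becomes reducible.

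Next I would identify each component as a homogeneous space: being a single $U_2$-orbit it is $U_2/\mathrm{Stab}$. For an irreducible pair Schur's lemma gives $\mathrm{Stab}=U_1$ (the scalars), so $X_\chi\cong U_2/U_1$; for $(A,\pm A)$ the stabilizer is the centralizer of the reflection $A$, namely $U_1\times U_1$, so $X_\pm\cong U_2/(U_1\times U_1)\cong P^1_\mathbb C$. Together with Proposition 3.1, and the fact that $tr(AB)$ separates the components, this proves (1). I expect this orbit classification to be the main obstacle: Proposition 3.1 already gives the decomposition into homogeneous spaces for free, so the real content is verifying that $tr(AB)$ is a complete invariant, that each admissible value is realized by exactly one orbit, and that the two reducible endpoints are correctly matched with the pairs $\{\chi_{++},\chi_{--}\}$ and $\{\chi_{+-},\chi_{-+}\}$.

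For (2) I would unwind stationarity against the Haar state of $\widehat{D_n}$, which is the canonical trace $\tau(g)=\delta_{g,e}$ on $C^*(D_n)$. Since $\pi(g)$ is the function $x\mapsto\rho_x(g)$, the condition $\int_{\widehat{D_n}}=(tr\otimes\int_X)\pi$ becomes the requirement that $\int_{X_G}tr(\rho_x(g))\,d\mu(x)=\delta_{g,e}$ for every $g\in D_n$. The decisive simplification is that $g\mapsto tr(\rho_x(g))$ is constant on each component, since all of its points carry isomorphic representations; thus the left-hand side collapses to $\frac12\sum_C\mu(C)\psi_C(g)$, where $\psi_C$ is the character on the component $C$. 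Matching $\sum_C\mu(C)\psi_C=2\delta_{g,e}=\frac1n\chi_{\mathrm{reg}}$ and expanding $\chi_{\mathrm{reg}}=\sum_\rho\dim(\rho)\psi_\rho$, the linear independence of irreducible characters forces $\mu(X_+)=\mu(X_-)=1/n$ and $\mu(X_\chi)=2/n$ for each $\chi$. These weights assign equal mass $2/n$ to each $X_\chi$ and to $X_+\cup X_-$ (split symmetrically between $X_+$ and $X_-$), sum to $1/n+1/n+(n/2-1)\cdot 2/n=1$, and combined with the invariant probability measure on each homogeneous component yield exactly the measure in the statement.
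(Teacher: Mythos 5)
Your proof is correct and follows essentially the same route as the paper: decompose $X_G$ according to whether the quasi-flat representation is reducible (the two components $X_\pm$, with $B=\pm A$) or irreducible (one component per two-dimensional character), identify each as a single $U_2$-conjugation orbit with the stabilizers you describe, and then verify stationarity by character theory. Your derivation of the weights by matching against $\frac{1}{2n}\chi_{\mathrm{reg}}$ is equivalent to the paper's direct use of the orthogonality relation $\sum_{\pi\in\mathrm{Irr}}\dim(\pi)\chi_\pi(g)=0$ for $g\neq 1$, so the two arguments differ only in presentation.
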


\begin{proof}
We use the fact that the points of $X_G$ are the representations $\phi:D_n\to U_2$. These representations fall into two classes, as follows:

-- Direct sums of two 1-dimensional representations.
    
-- Irreducible 2d representations, indexed by the corresponding $\frac{n}{2}-1$ characters.

With this description in hand, both the assertions follow:

(1) The direct sums fall into two subcases, depending on whether the unitaries $\phi(g_i)$ are equal or differ by a sign, and these give rise to two connected components of $X_G$, denoted $X_+,X_-$. These two spaces are both homogeneous under the $U_2$ conjugation action, and the isotropy groups of points in these sets are maximal tori in $U_2$.

As for the irreducible 2d representations, each of them corresponds to a connected component $X_\chi$ of $X_G$, which is a homogeneous space under the action of the unitary group $U_2$ by conjugation. The isotropy group of each point in $X_\chi$ is the center of $U_2$.

Summarizing, we have obtained the description of $X_G$ given in the statement.

(2) We recall from \cite{ser} that for a finite group $\Gamma$, the following holds, for any $g\neq1$:
$$\sum_{\pi\in Irr(G)}\mathrm{dim}(\pi)\chi_\pi(g)=0$$

In our case this formula is as follows, with the first sum being over the characters $\chi$ as above, and with $\xi_i$ ranging over the four 1-dimensional representations of $D_n$:
$$2\sum_\chi\chi(g)+\sum_i\xi_i(g)=0$$

But this is exactly the stationarity formula claimed in the statement, applied to an  arbitrary element $g\in\Gamma-\{1\}$, with respect to the weights indicated there.
\end{proof}

Observe that even though $X_\pm$ are disjoint, the union $X_+\cup X_-$ is itself a homogeneous space, since its two components are (non-canonically) isomorphic.

We now consider two-generator homogeneous groups with higher order generators. The situation here can be more complicated, as shown by the following result:

\begin{theorem}
Consider the Heisenberg group of order $K^3$, namely
$$\Gamma=\left<g_1,g_2\Big|g_1^K=g_2^K=[g_1,g_2]^K=1,[g_1,g_2]={\rm central}\right>$$
with $K\in\mathbb N$ assumed to be prime. Then the following hold:
\begin{enumerate}
\item The universal model space $X_G$ is not homogeneous.

\item However, $X_G$ is a union of $K!+K-1$ connected homogeneous spaces. 

\item The universal model is stationary, with respect to some suitably chosen weights. 
\end{enumerate}
\end{theorem}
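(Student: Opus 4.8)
The plan is to realize $X_G$ as the set of conjugacy classes of quasi-flat representations $\phi:\Gamma\to U_K$, with the orbit structure supplied by Proposition 3.1, and then to read off all three assertions from the representation theory of the Heisenberg group. First I would recall that, since $K$ is prime, $\Gamma$ has exactly $K^2$ one-dimensional representations (factoring through $\Gamma^{ab}=\mathbb Z_K\times\mathbb Z_K$) and exactly $K-1$ irreducible representations of dimension $K$, one $\rho_\psi$ for each nontrivial character $\psi$ of the center $Z=\langle[g_1,g_2]\rangle\cong\mathbb Z_K$. Every $K$-dimensional representation $\phi$ of $\Gamma$ is therefore either a single $K$-dimensional irreducible, or a direct sum of $K$ one-dimensional representations.

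Next I would impose quasi-flatness in the sense of Definition 1.8. Realizing $\rho_\psi$ in Schr\"odinger form, with $g_1$ acting as the cyclic shift $S$ and $g_2$ as the diagonal $D=\mathrm{diag}(1,\omega^a,\dots,\omega^{a(K-1)})$ where $\omega=e^{2\pi i/K}$ and $\psi([g_1,g_2])=\omega^a$ with $a\neq0$, one sees that $S$ has all $K$-th roots of unity as eigenvalues, while primality of $K$ forces $a$ to be invertible mod $K$, so $D$ likewise has all $K$-th roots of unity as eigenvalues. Hence all $K-1$ irreducibles are quasi-flat. For a direct sum $\bigoplus_m\lambda_{s_m,t_m}$ of one-dimensional representations, quasi-flatness requires both $(s_m)$ and $(t_m)$ to be permutations of $\mathbb Z_K$; viewed as an unordered family these are exactly the graphs $\{(s,\tau(s)):s\in\mathbb Z_K\}$ of permutations $\tau\in S_K$, giving $K!$ isomorphism classes.

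I would then compute the isotropy groups under the $U_K$-conjugation action. By Schur's lemma the stabilizer of an irreducible point is the group of scalars, so its orbit is $PU_K=U_K/U(1)$, of dimension $K^2-1$; since the $K!$ direct sums are multiplicity-free, their stabilizers are the maximal torus $T$, so their orbits are flag manifolds $U_K/T$, of dimension $K^2-K$. By Proposition 3.1 these $K!+(K-1)$ orbits are precisely the connected components of $X_G$, which is assertion (2). Assertion (1) is then immediate: for the prime $K\geq2$ the two orbit types have distinct dimensions $K^2-1\neq K^2-K$, so $X_G$ cannot be a single $U_K$-orbit.

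Finally, for stationarity (3) I would follow the character-theoretic method of Proposition 3.3, assigning weight $a$ to each irreducible orbit and weight $b$ to each direct-sum orbit. Since traces are conjugation-invariant, the identity $\int_G=(tr\otimes\int_X)\pi$ reduces, after evaluating on group elements, to $a\sum_\psi\chi_{\rho_\psi}(g)+b\sum_{\tau\in S_K}\chi_{\phi_\tau}(g)=K\,\delta_{g,e}$ for all $g$, together with the normalization $(K-1)a+K!\,b=1$. The two inputs are that the irreducible characters vanish off the center and equal $K\psi(z)$ on $z\in Z$, and that summing the direct-sum characters over all $\tau\in S_K$ collapses (each value $\tau(s)$ being attained $(K-1)!$ times) to $(K-1)!$ times the sum of all one-dimensional characters, which is $K^2$ on $Z$ and $0$ off $Z$. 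Both contributions are thus supported on $Z$, so the off-center equations hold automatically, while the central equations force $a=K!\,b$; combined with normalization this yields $a=1/K$ and $b=1/(K\cdot K!)$. The main obstacle is precisely this last computation: one must verify that the permutation-averaged direct-sum contribution has the same support, namely the center, as the irreducible contribution, since this is exactly what allows a single uniform choice of weights on each orbit type to work.
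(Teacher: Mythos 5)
Your proposal is correct and follows essentially the same route as the paper: decompose $X_G$ by representation type (the $K!$ multiplicity-free sums of characters versus the $K-1$ irreducibles), identify the connected components as $U_K$-orbits with stabilizers the maximal torus resp.\ the scalars, and obtain stationarity from character orthogonality, arriving at the same weights $1/(K\cdot K!)$ and $1/K$. Your explicit verification via the Schr\"odinger model that primality of $K$ makes the irreducibles quasi-flat is a detail the paper leaves implicit, but it does not change the argument.
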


\begin{proof}
  Consider the universal representation $\pi:\Gamma\to C(X_G,U_K)$ and fix a primitive $K$-th root of unity $w$. because $K$ is prime, the irreducible representations of the Heisenberg group are either characters of the quotient $\Gamma/\langle [g_1,g_2]\rangle$ or irreducible and $K$-dimensional. It follows that for each $x\in X_G$ the operator $\pi_x([g_1,g_2])$ is a scalar and hence a power of $w$.  

  For each $0\leq l\leq K-1$ we get a subspace $X_l\subset X_G$ defined as the collection of those representations $\pi_x$ for which $\pi_x([g_1,g_2])=w^l$. We then have:
$$X_G=X_0\sqcup\cdots\sqcup X_{K-1}$$

With this decomposition in hand, we can now prove our results:

(1,2) Our first claim is that the spaces $X_l$ defined above are smooth real manifolds, having dimensions as follows:
\begin{eqnarray*}
\dim_\mathbb R(X_0)&=&K(K-1)\\
\dim_\mathbb R(X_l)&=&(K+1)(K-1)\ {\rm for}\ l>0
\end{eqnarray*}

In order to prove this claim, let us first look at $X_0$. This space consists of $K$-dimensional representations which factor through the quotient $\Gamma\to\mathbb Z_K^2$, and for which the two generators have eigenvalues $w^i$ for $0\le i\le K-1$. But such a representation is specified by the data consisting of the $w^i$-eigenspaces $V_i$ of $g_1$, and the eigenvalues $w^{\tau(i)}$ of $g_2$ on the $V_i$, where $\tau$ is a permutation of the set $\{0,\ldots,K-1\}$.

Thus $X_0$ breaks up as a disjoint union of $K!$ components, indexed by the permutations $\tau\in S_K$. Each component, in turn, is isomorphic to the space $T_K$ of ordered $K$-tuples of orthogonal lines in $\mathbb C^K$ and is thus homogeneous under the action of $U_K$ on such $K$-tuples. Choosing a single line in $\mathbb C^K$ exhibits $T_K$ as a bundle over $P^{K-1}_\mathbb C$ with fiber $T_{K-1}$, so by induction the real dimension of $X_0$ follows to be, as claimed:
$$\dim_\mathbb R(X_0)=2(k-1)+2(k-2)+\cdots = K(K-1)$$

Let us discuss now the case $l>0$. Here all the representations making up $X_l$ are irreducible and mutually isomorphic, so $X_l$ is a homogeneous space under the action of $U_K$ by conjugation. Moreover, the isotropy group of a point in $X_l$, identified with the corresponding representation, is the center $\mathbb{T}\subset U_K$, so we have, as claimed:
$$\dim_\mathbb R(X_l) = \mathrm{dim}_{\mathbb{R}}(U_K)-1 = K^2-1$$

All in all, we have the $K!$ homogeneous connected components that make up $X_0$ and the $K-1$ connected homogeneous components $X_l$, $l\ge 1$, proving the first assertion. 

(2) This is very similar to the proof of the analogous assertion in Proposition 3.3 above. Consider indeed the uniform probability measures $\mu_l$ on $X_l$ respectively, with ``uniform'' meaning by definition invariant under the action of $U_K$, and in the case of $X_0$, assigning equal masses to the $K!$ connected components. The probability measure on $X_G$ that will give us the stationarity will be then the average of the measures $\mu_l$. 

In order to prove that we have indeed the stationarity property, let $\phi_l$ be the normalized traces on $\Gamma$ attached to the spaces $X_l$. At $l=0$ we have the following formula, where the sum ranges over the $K^2$ characters $\xi$ of the quotient $\Gamma\to\mathbb Z_K^2$:
$$\phi_0(g) = \frac{(K-1)!}K \sum_\xi \xi(g),\ \forall g\in\Gamma$$

At $l\geq1$ now, we have the following formula, where $\chi_l$ is the character corresponding to the $K$-dimensional irreducible representations that $X_l$ consists of:
$$\phi_l=\frac{\chi_l}{K}$$

By integrating now, at $l=0$ we have the following formula: 
$$\left(tr\otimes\int_{X_G}(.)\,d\mu_0\right)\pi=\frac{1}{K!}\,\phi_0$$

As for the $l\geq1$ case, here the formula is as follows:
$$\left(tr\otimes\int_{X_G}(.)\,d\mu_l\right)\pi=\phi_l$$

In order now to finish, we can use the following formula, which is analogous to the one that we used in the proof of Proposition 3.3 above:
$$\sum_{\xi\in\widehat{\mathbb{Z}_K^2}}\xi(g) + K\sum_{\ell\ge 1}\chi_l(g)=0,\ \forall g\in\Gamma-\{1\}$$

Indeed, consider the average of the measures on the various components of $X_G$:
$$\mu=\frac{1}{K}\sum_{i=0}^{l-1}\mu_l$$

By the above formulae, $\mu$ makes our model stationary, and we are done.
\end{proof}

\section{Metabelian groups}

We discuss now stationarity questions for general virtually abelian uniform groups. As explained in \cite{bch}, it follows from Thoma's theorem \cite{tho} that the corresponding group algebras have stationary models, of a certain special type, and our aim here is that of proving some finer results of this type, involving this time quasi-flat models.

Consider a group $\mathbb Z_K^{*M}\to\Gamma\to\mathbb Z_K^M$ which is uniform, in the sense that the symmetric group $S_M$ acts on the generators. Observe that $\Gamma$ must be the quotient of $\mathbb Z_K^{*M}$ by a $S_M$-invariant normal subgroup contained in the derived subgroup $(\mathbb Z_K^{*M})'$. We have:

\begin{proposition}
The following hold:
\begin{enumerate}
\item If $\Gamma_1,\ldots,\Gamma_M$ are abelian, the derived subgroup of $\Gamma=\Gamma_1*\ldots*\Gamma_M$ is free.

\item If $\Gamma_i$ are all finite, the derived subgroup is free on finitely many generators.

\item If $\Gamma$ is uniform, the kernel of $\mathbb Z_K^{*M}\to \Gamma$ is a free subgroup of the domain.
\end{enumerate}
\end{proposition}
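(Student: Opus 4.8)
The statement collects three facts about free products of groups and quotients thereof, and the natural strategy is to reduce all three to the Kurosh subgroup theorem, which describes subgroups of free products. The plan is to treat (1) and (2) first, as they are statements about a free product $\Gamma=\Gamma_1*\cdots*\Gamma_M$ of abelian groups, and then derive (3) as a consequence of (1) applied to $\mathbb{Z}_K^{*M}$.

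For part (1), I would apply Kurosh to the derived subgroup $\Gamma'\subseteq\Gamma$. Kurosh tells us that any subgroup $H$ of a free product $\Gamma_1*\cdots*\Gamma_M$ decomposes as a free product of a free group and conjugates of subgroups of the individual factors $\Gamma_i$. The key point is to show that the ``conjugated factor'' pieces are trivial when $H=\Gamma'$. Concretely, a nontrivial element of $\Gamma$ conjugate into some $\Gamma_i$ is itself conjugate to a nontrivial element of $\Gamma_i$; since each $\Gamma_i$ is abelian, one can check via the projection $\Gamma\to\Gamma_i$ (or more carefully via the abelianization $\Gamma\to\Gamma_1\times\cdots\times\Gamma_M$) that no nontrivial element conjugate into a factor lies in the commutator subgroup $\Gamma'$. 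Hence the only Kurosh pieces surviving inside $\Gamma'$ are free, so $\Gamma'$ is free.

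For part (2), once $\Gamma'$ is known to be free from (1), its rank is controlled by the structure of the quotient. Since $\Gamma/\Gamma'=\Gamma_1\times\cdots\times\Gamma_M$ is the abelianization, and each $\Gamma_i$ is finite, the quotient is finite; a free normal subgroup of finite index (equivalently, a free subgroup realized as the kernel of a map onto a finite group) has finitely many generators, with the rank computable by an Euler characteristic / Nielsen–Schreier count. I would invoke the fact that a free group arising as the kernel of $\Gamma\to\Gamma_1\times\cdots\times\Gamma_M$ with finite image and with $\Gamma$ itself of finite ``rank'' in the free-product sense has finite rank; the Nielsen–Schreier index formula $\mathrm{rank}(\Gamma')-1=[\Gamma:\Gamma'](\text{rank contribution})$ makes this precise. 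For part (3), I would apply (1) directly to $\mathbb{Z}_K^{*M}$: its derived subgroup is free, and the kernel $N$ of $\mathbb{Z}_K^{*M}\to\Gamma$ sits inside this derived subgroup (as noted in the paragraph preceding the proposition, since $\Gamma$ is uniform the kernel is contained in $(\mathbb{Z}_K^{*M})'$). A subgroup of a free group is free by Nielsen–Schreier, so $N$ is free.

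The main obstacle I anticipate is the careful verification in part (1) that the factor-conjugate Kurosh pieces genuinely vanish inside $\Gamma'$, i.e.\ that no nontrivial element of a conjugate of an abelian factor lies in the commutator subgroup. This requires tracking conjugacy and the abelianization simultaneously, and it is exactly where the abelianity hypothesis on the $\Gamma_i$ is used; the rest of the argument is a fairly mechanical application of Kurosh and Nielsen–Schreier.
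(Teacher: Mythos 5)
Your proposal is correct and follows essentially the same route as the paper: Kurosh plus the abelianization map $\Gamma\to\Gamma_1\times\cdots\times\Gamma_M$ to kill the factor-conjugate pieces in (1), a finite-index/Schreier-type argument for finite generation in (2), and containment of the kernel in the derived subgroup combined with Nielsen--Schreier for (3). No substantive differences to report.
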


\begin{proof}
This follows indeed by using the basic theory of groups:

(1) Let $H$ be a subgroup of the free product $\Gamma$ in the statement. According to the Kurosh theorem, this subgroup decomposes as follows, with $F\subset\Gamma$ being free, and with the groups $H_i$ being conjugate to subgroups in various free factors $\Gamma_i\subset\Gamma$:
$$H=F*H_1*\ldots*H_l$$

By using the above subgroups $\Gamma_i$, we can consider the following surjection:
$$\Gamma\to\Gamma_1\times\ldots\times\Gamma_M$$

The non-trivial $H_i$ will have a non-trivial image through it, whereas the commutator subgroup $\Gamma'\subset\Gamma$ is annihilated by the homomorphism. It follows that there are no additional free factors $H_i$ in the decomposition of $H$, i.e. the latter is free.

(2) This follows by using Schreier's lemma, which states that a finite index subgroup of a finitely generated group is again finitely generated. 

(3) We know that the kernel is a subgroup of the derived group $\left(\mathbb Z_K^{*M}\right)'$. But the latter is free by using (2) above, and the conclusion follows from the Nielsen-Schreier theorem, which states that the subgroups of the free groups are free.
\end{proof}

We recall that a metabelian group is a group $\Gamma$ whose commutator subgroup is abelian. Equivalently, $\Gamma$ must appear as an extension of an abelian group by another abelian group. As basic examples, we have for instance the dihedral groups, discussed in section 3.

In our uniform group setting, we can formulate the following definition: 

\begin{definition}
Consider a uniform group $\mathbb Z_K^{*M}\to\Gamma\to\mathbb Z_K^M$.
\begin{enumerate}
\item We say that $\Gamma$ is metabelian when the kernel of $\Gamma\to\mathbb Z_K^M$ is abelian.

\item We set $\left(\mathbb Z_K^{*M}\right)_{met}=\mathbb Z_K^{*M}/\left(\mathbb Z_K^{*M}\right)''$, and call it universal metabelian.
\end{enumerate}
\end{definition}

Observe that our notion in (1) agrees with the usual definition of the metabelian groups, given above. Regarding now (2), observe that the quotient there is indeed uniform, because the group we are quotienting out is characteristic, and hence invariant under the action of $S_M$ on $\mathbb Z_K^{*M}$. Thus the $S_M$-action descends to $\left(\mathbb Z_K^{*M}\right)_{met}$, as required.

We have the following results, regarding such groups:

\begin{proposition}
The universal metabelian uniform group is as follows:
\begin{enumerate}
\item This is an extension of $\mathbb Z_K^M$ by a free abelian group of finite rank.

\item At $M=2$, this is an extension of $\mathbb Z_K^2$ by a free abelian group of rank $(K-1)^2$.
\end{enumerate}
\end{proposition}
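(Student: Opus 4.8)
The plan is to exploit the abelianization exact sequence together with the freeness result already established in Proposition 4.1, and then to pin down the rank at $M=2$ by a rational Euler characteristic count. Write $F=\mathbb Z_K^{*M}$, so that $F^{ab}=\mathbb Z_K^M$ and the commutator subgroup $F'=(\mathbb Z_K^{*M})'$ is precisely the kernel of the abelianization map $F\to\mathbb Z_K^M$; it therefore has index $K^M$ in $F$. By definition $(\mathbb Z_K^{*M})_{met}=F/F''$, and the inclusions $F''\subset F'\subset F$ yield
\[
1\longrightarrow F'/F''\longrightarrow F/F''\longrightarrow F/F'\longrightarrow 1,
\]
that is $1\to (F')^{ab}\to(\mathbb Z_K^{*M})_{met}\to\mathbb Z_K^M\to1$. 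This already exhibits the universal metabelian group as an extension of $\mathbb Z_K^M$ by the abelian group $(F')^{ab}=F'/F''$, and it reduces everything to understanding the structure of this kernel.

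For part (1) the work is essentially done by Proposition 4.1: since each free factor $\mathbb Z_K$ is finite abelian, its parts (1) and (2) give that $F'$ is free on finitely many generators. Consequently its abelianization $(F')^{ab}=F'/F''$ is free abelian of finite rank, which is exactly the assertion of (1).

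For part (2), with $M=2$, the only remaining task is to compute this rank, say $r$, and I would do so with the rational Euler characteristic $\chi$. Using $\chi(\mathbb Z_K)=1/K$ and the free-product formula $\chi(A*B)=\chi(A)+\chi(B)-1$ one gets $\chi(F)=2/K-1$. The multiplicativity $\chi(H)=[F:H]\,\chi(F)$ applied to $H=F'$ of index $K^2$ then yields
\[
\chi(F')=K^2\Bigl(\tfrac{2}{K}-1\Bigr)=2K-K^2.
\]
On the other hand $F'$ is free of rank $r$, so $\chi(F')=1-r$. Equating the two expressions gives $1-r=2K-K^2$, hence $r=(K-1)^2$, and the kernel $(F')^{ab}$ is $\mathbb Z^{(K-1)^2}$, completing (2).

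The conceptual steps, namely the exact sequence and the appeal to Proposition 4.1, are routine, so the only point requiring care is the rank computation; the subtlety there is purely bookkeeping with Euler-characteristic conventions (multiplicativity in finite index, additivity-with-correction for free products, and $\chi=1-r$ for a free group of rank $r$). These facts are standard for the virtually free groups at hand, so they apply without trouble. As a sanity check one may run the same count for general $M$: here $\chi(F)=M/K-(M-1)$ gives $\chi(F')=K^M(M/K-M+1)$ and hence $r=1-MK^{M-1}+(M-1)K^M$, specializing to $(K-1)^2$ at $M=2$. An alternative to the Euler-characteristic argument is a direct Reidemeister--Schreier count, equivalently computing the quotient graph of $F'$ acting on the Bass--Serre tree of $F$, but that route is more laborious and yields no extra insight.
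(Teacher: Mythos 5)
Your proof is correct, and for part (2) it takes a genuinely different route from the paper. Both arguments begin identically: the extension $1\to F'/F''\to F/F''\to F/F'\to 1$ together with Proposition 4.1 settles part (1). For the rank computation at $M=2$, however, the paper runs the Reidemeister--Schreier procedure explicitly: starting from the coset representatives $x^iy^j$ it whittles the Schreier generating set down to the $(K-1)^2$ elements $g_{ij}=x^i[y^j,x]x^{-i}$, $1\le i,j\le K-1$, and then verifies by a reduced-word argument that these form a free family. Your Euler-characteristic count ($\chi(F)=2/K-1$, multiplicativity over the index-$K^2$ subgroup $F'$, and $\chi=1-r$ for a free group of rank $r$) is cleaner, sidesteps the independence verification entirely, and hands you the general-$M$ formula $r=1-MK^{M-1}+(M-1)K^M$ at no extra cost; it does rely on the standard theory of rational Euler characteristics for virtually free groups, which is legitimate here since $F'$ is free of finite rank and of finite index. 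What the paper's more laborious computation buys is the explicit generating set $\{g_{ij}\}$ itself, which is not incidental: Proposition 4.4 immediately afterwards uses precisely these generators to identify $\Gamma'$ with $(\mathbb Z^{K-1})^{\otimes 2}$ and to write down the companion-type matrix by which $x$ and $y$ act. So your argument proves the stated proposition completely, but a reader following your route would still need the explicit generators (or an equivalent identification) before proceeding to the next result.
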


\begin{proof}
The first assertion follows from Proposition 4.1. Regarding the second assertion, set $\Omega=\mathbb{Z}_K^{*2}$, with standard generators denoted $x,y$. According to the Schreier lemma, a set of generators for its derived subgroup $\Omega'$, which is free by the above results, can be obtained as follows. First, consider the set of representatives for the cosets of $\Omega'$:
 $$R=\left\{x^iy^j\Big|0\leq i,j\leq K-1\right\}$$
 
If for each element $g\in\Omega$ we denote by $\overline{g}$ the representative of the coset of $g$, then the generating set is as follows:
$$\Omega'=\left<gx\left(\overline{gx}\right)^{-1}\Big|g\in R\right>$$
 
Since only those $g=x^iy^j$ with $j\geq 1$ are of relevance, this leaves us with the $K(K-1)$ generators obtained by conjugating by all $x^i$, $0\le i\le K-1$, the following elements:
$$[y^j,x],\ 1\leq j\leq K-1$$

These elements are still not independent, because for each $1\leq j\leq K-1$ the product of the elements $x^i[y^j,x]x^{-i}$, with $0\leq i\leq K-1$,   in the order of increasing indices $i$ equals the inverse of $[y^j,x]$. Thus, we are led to the following system of generators:
$$g_{ij}:=x^i[y^j,x]x^{-i},\ 1\le j\le K-1,\ 1\le i\le K-1$$
It remains to argue that these generators indeed form a free family. In fact, they already do so in $\mathbb{Z}_K^{*2}$ (rather than its metabelian quotient). To verify this, consider a reduced product $g$ of elements $g_{ij}^{\pm 1}$, in the sense that no factor $g_{ij}$ appears next to a factor $g_{ij}^{-1}$. It is then easy to see that further extending $g$ to a reduced product $gg_{ij}^{\pm 1}$ does not decrease the number of factors $y^j$, $j\in \mathbb{Z}\setminus\{0\}$ in the expansion
\begin{equation*}
  g=x^{a_1}y^{b_1}x^{a_2}\cdots
\end{equation*}
as a word in the letters $x^i$ and $y^j$ for $1\le i,j\le K-1$. Since such a word is non-trivial as an element of $\mathbb{Z}_K^{*2}$ as soon as it is non-empty, the conclusion follows. 
\end{proof}

Now consider the universal group $\Gamma=(\mathbb Z _K^{*2})_{met}$ discussed above. Its commutator is, according to the above result, a free abelian group of rank $(K-1)^2$. We have:

\begin{proposition}
We can identify the derived group $\Gamma'$ with the tensor square $(\mathbb{Z}^{K-1})^{\otimes 2}$ such that the actions of the generators $x,y\in \Gamma$ on $\Gamma'$ are via the matrix
$$\begin{pmatrix}
0&\cdots&0&-1\\
1&\cdots&0&-1\\
\vdots&\ddots&\vdots&\vdots\\
0&\cdots&1&-1
\end{pmatrix}$$
on the two respective tensor components $\mathbb Z^{K-1}$.
\end{proposition}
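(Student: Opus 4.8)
The plan is to recognize $\Gamma'$ as a module over the group ring $R=\mathbb{Z}[\Gamma^{ab}]=\mathbb{Z}[\mathbb{Z}_K^2]$ and compute this module explicitly. Since $\Gamma$ is metabelian, $\Gamma'$ is abelian, so inner automorphisms by elements of $\Gamma'$ act trivially on $\Gamma'$; hence the conjugation action of $\Gamma$ on $\Gamma'$ descends to $\Gamma^{ab}=\mathbb{Z}_K^2=\langle\bar x,\bar y\rangle$ and turns $\Gamma'$ into a module over $R=\mathbb{Z}[\bar x,\bar y]/(\bar x^K-1,\bar y^K-1)$, with $\bar x\cdot c=xcx^{-1}$ and $\bar y\cdot c=ycy^{-1}$. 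The asserted statement is exactly the claim that this module is cyclic, generated by $c:=[y,x]$, subject precisely to the two ``norm'' relations that cut it down to the tensor square.

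First I would establish cyclicity. Writing $\Gamma'$ additively, the commutator identities $[ab,d]=\bar a\,[b,d]+[a,d]$ and $[d,ab]=[d,a]+\bar a\,[d,b]$ give by induction $[y^j,x]=(1+\bar y+\cdots+\bar y^{j-1})\,c$. Consequently the free generators $g_{ij}=x^i[y^j,x]x^{-i}$ produced in the proof of the preceding proposition become $g_{ij}=\bar x^i(1+\bar y+\cdots+\bar y^{j-1})\,c$, so every generator lies in $Rc$ and $\Gamma'=Rc$. Next I would extract the relations. Applying the second identity to $x^K=x\cdot x^{K-1}$ yields $[y^j,x^K]=N_x\,[y^j,x]$ with $N_x:=1+\bar x+\cdots+\bar x^{K-1}$; taking $j=1$ and using $x^K=1$ gives $N_x\,c=0$. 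Symmetrically $[y^K,x]=N_y\,c$ with $N_y:=1+\bar y+\cdots+\bar y^{K-1}$, and $y^K=1$ forces $N_y\,c=0$. This produces a surjection $R/(N_x,N_y)\twoheadrightarrow\Gamma'$ carrying the class of $1$ to $c$.

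Finally I would identify the source. Since $\bar x^K-1=(\bar x-1)N_x$, one has $\mathbb{Z}[\bar x]/(\bar x^K-1,N_x)=\mathbb{Z}[\bar x]/(N_x)$, a free $\mathbb{Z}$-module of rank $K-1$ on which multiplication by $\bar x$ is, in the basis $1,\bar x,\ldots,\bar x^{K-2}$, exactly the companion matrix $A$ of $1+t+\cdots+t^{K-1}$ displayed in the statement; the same holds for $\bar y$. As both factors are free $\mathbb{Z}$-modules, flatness gives
\[
R/(N_x,N_y)\;\cong\;\bigl(\mathbb{Z}[\bar x]/(N_x)\bigr)\otimes_{\mathbb{Z}}\bigl(\mathbb{Z}[\bar y]/(N_y)\bigr)\;\cong\;(\mathbb{Z}^{K-1})^{\otimes 2},
\]
with $\bar x$ acting as $A\otimes 1$ and $\bar y$ as $1\otimes A$, i.e.\ via $A$ on the two respective tensor components, exactly as asserted.

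The one genuine obstacle is to upgrade the surjection $R/(N_x,N_y)\twoheadrightarrow\Gamma'$ to an isomorphism, i.e.\ to rule out additional relations. I would settle this by a rank count: the target is free abelian of rank $(K-1)^2$ by the preceding proposition, while the tensor-product computation shows the source is free of the same rank $(K-1)^2$, and a surjection of free abelian groups of equal finite rank is an isomorphism. (Alternatively one can argue injectivity directly, since the $g_{ij}$ are unitriangularly related to the standard tensor basis $\bar x^{\,i}\otimes\bar y^{\,j-1}$ and are therefore $\mathbb{Z}$-independent; but invoking the already-established rank is cleaner.) The identification of the two companion-matrix actions with the displayed $A$ is then immediate, completing the proof.
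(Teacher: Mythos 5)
Your proposal is correct. The paper itself offers no argument here beyond ``this is clear from the above results,'' so there is no written proof to compare against; what you have done is supply, in full, the argument that the authors leave implicit, and you do it with exactly the ingredients that Proposition 4.3 makes available. Recasting $\Gamma'$ as a cyclic module over $R=\mathbb{Z}[\Gamma^{ab}]=\mathbb{Z}[\mathbb{Z}_K^2]$ generated by $c=[y,x]$ is the natural formalization: the identity $[y^j,x]=(1+\bar y+\cdots+\bar y^{j-1})c$ shows the free generators $g_{ij}=x^i[y^j,x]x^{-i}$ of Proposition 4.3 all lie in $Rc$, the relations $N_xc=N_yc=0$ follow correctly from $x^K=y^K=1$ via the standard commutator expansions, and the identification $R/(N_x,N_y)\cong\bigl(\mathbb{Z}[\bar x]/(N_x)\bigr)\otimes\bigl(\mathbb{Z}[\bar y]/(N_y)\bigr)$ with $\bar x,\bar y$ acting by the companion matrix of $1+t+\cdots+t^{K-1}$ on the respective factors is exactly the displayed statement. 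The one point that genuinely needs care --- upgrading the surjection $R/(N_x,N_y)\twoheadrightarrow\Gamma'$ to an isomorphism --- you handle correctly by the rank count against the freeness and rank $(K-1)^2$ established in Proposition 4.3 (a surjection between free abelian groups of equal finite rank is injective, as one sees after tensoring with $\mathbb{Q}$). No gaps; if anything, your write-up is a useful expansion of the paper's one-line proof.
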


\begin{proof}
This is indeed clear from the above results.
\end{proof}

We examine now in detail the quotient $\Gamma$ of the group $(\mathbb Z_3^{*2})_{met}$ by the subgroup consisting of the squares of the elements of the commutator subgroup $(\mathbb Z_3^{*2})_{met}'\simeq \mathbb Z^4$. We know that $\Gamma$ is an extension of $\mathbb Z_3^2$ by $\mathbb Z_2^4$, and hence has order $16\cdot 9=144$. We will now describe the universal model space $X_G$ associated to the compact dual $G=\widehat{\Gamma}$.

First, $X_G$ consists by definition of 3-dimensional representations of $\Gamma$. Since $A=\Gamma'$ is abelian, the general theory in \cite{ser} shows that the dimension of every irreducible representation divides the order of $\Gamma/A$, which is $9$. In conclusion, a 3-dimensional representation is either irreducible or a sum of three 1-dimensional representations. Moreover, 1-dimensional representations are automatically trivial on $A$, and hence are characters of $\Gamma_{ab}\simeq\mathbb Z_3^2$. Based on this description, let us introduce:

\begin{definition}
Let $\Gamma$ and $X_G$ be as above.
\begin{enumerate}
\item We say that a component of $X_G$ is solid if is an orbit under the conjugation by $U_3$ of a $3$-dimensional irreducible representation of $\Gamma$.

\item Also, we say that a component of $X_G$ is loose if it consists of representations that break up as sums of $3$ irreducible representations of $\Gamma_{ab}$.
\end{enumerate}
\end{definition}

The first remark about the structure of $X_G$ is as follows:

\begin{proposition}
The space $X_G$ has $6$ loose components $\{X_\tau|\tau\in S_3\}$, which are all isomorphic as $U_3$-spaces to $U_3/\mathbb T^3$, where $\mathbb T^3\subset U_3$ is the subgroup of diagonal matrices.
\end{proposition}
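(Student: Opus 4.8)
The plan is to mimic the analysis of the component $X_0$ in the proof of Theorem 3.4, specialized to $K=3$. By the discussion preceding Definition 4.7, a loose representation is a direct sum $\rho=\xi_1\oplus\xi_2\oplus\xi_3$ of three characters of $\Gamma_{ab}\simeq\mathbb Z_3^2$, each $\xi_i$ being determined by the pair $(\xi_i(x),\xi_i(y))=(w^{a_i},w^{b_i})$, where $w$ is a fixed primitive cube root of unity and $a_i,b_i\in\{0,1,2\}$. First I would impose quasi-flatness: the eigenvalues of $\rho(x)$ and of $\rho(y)$ must each be the full set $\{1,w,w^2\}$. Since these eigenvalues are precisely $w^{a_1},w^{a_2},w^{a_3}$ and $w^{b_1},w^{b_2},w^{b_3}$ respectively, this forces $(a_1,a_2,a_3)$ and $(b_1,b_2,b_3)$ to each be a permutation of $(0,1,2)$.

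Next I would set up the labeling. Ordering the three characters so that $\xi_i(x)=w^i$, the assignment $i\mapsto b_i$ becomes a well-defined bijection $\tau\in S_3$, and every loose quasi-flat representation lies in exactly one of the six resulting families $X_\tau$. Since a unitary $\rho(x)$ with three distinct eigenvalues has orthogonal eigenlines, and $\rho(y)$ commutes with it (the representation factoring through the abelian $\Gamma_{ab}$), such a representation is completely determined by the ordered triple of orthogonal eigenlines $(V_0,V_1,V_2)$ of $\rho(x)$ in $\mathbb C^3$, with $V_i$ the $w^i$-eigenspace and $w^{\tau(i)}$ the corresponding eigenvalue of $\rho(y)$. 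Conjugation by $g\in U_3$ sends this data to $(gV_0,gV_1,gV_2)$ while fixing $\tau$, so $X_\tau$ is identified $U_3$-equivariantly with the space of ordered triples of mutually orthogonal lines in $\mathbb C^3$. The group $U_3$ acts transitively on this space, and the stabilizer of the triple of coordinate lines is exactly the subgroup $\mathbb T^3$ of diagonal unitaries; hence $X_\tau\cong U_3/\mathbb T^3$, as claimed.

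Finally I would check that the six families are genuinely distinct connected components. For fixed $\tau$ the space $X_\tau\cong U_3/\mathbb T^3$ is connected, being a homogeneous space of the connected group $U_3$. Distinct $\tau$ give distinct multisets of character-pairs $\{(w^i,w^{\tau(i)})\}_i$, hence pairwise non-isomorphic representations and thus disjoint $U_3$-orbits; moreover the set of characters occurring in a representation is locally constant along $X_G$, the eigenvalues of $\rho(x),\rho(y)$ taking values in the finite set of cube roots of unity, so no two $X_\tau$ can lie in the same connected component.

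I expect the only mildly delicate point to be the clean identification $X_\tau\cong U_3/\mathbb T^3$, namely the transitivity of the $U_3$-action on ordered triples of orthogonal lines together with the computation of the stabilizer; but this is routine linear algebra and is the $K=3$ instance of the identification $T_K\cong U_K/\mathbb T^K$ already used in Theorem 3.4. The remainder is bookkeeping with the characters of $\mathbb Z_3^2$.
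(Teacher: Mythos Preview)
Your proof is correct and follows essentially the same approach as the paper: both arguments observe that loose representations factor through $\Gamma_{ab}\simeq\mathbb Z_3^2$, that quasi-flatness forces the eigenvalues of $\rho(x)$ and $\rho(y)$ to be the full set $\{1,w,w^2\}$, and that commuting matrices with simple spectrum share eigenspaces, yielding the six components indexed by $S_3$. You supply considerably more detail than the paper's terse two-sentence proof, in particular spelling out the $U_3/\mathbb T^3$ identification and the disjointness of the $X_\tau$, but the underlying idea is the same (and indeed is the $K=3$ specialization of the $X_0$ analysis in Theorem 3.4, as you note).
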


\begin{proof}
Our space $X_G$ coincides with the model space associated to $\mathbb Z_3^2$, and consists of commuting pairs of unitary matrices with eigenvalues $1,w,w^2$, where $w=e^{2\pi i/3}$.

Two such commuting matrices $x,y$ must have coinciding eigenspaces, and we therefore have 6 components, corresponding to the 6 permutations of 3 symbols.
\end{proof}

With notations from the above proof, we agree to denote by $X_\tau$ the component of $X_G$ corresponding to the following function, identified with $\tau\in S_3$:
$$\{w^i\}=\text{ spectrum of }x\to \text{ spectrum of y}=\{w^i\}$$

Regarding now the solid components of $X_G$, these correspond to
the 3-dimensional irreducible representations of $\Gamma$, as noted above. In order to describe now these representations, we use the general theory in \cite{ser}. We fix a subgroup $\mathbb Z_3^2\simeq H\subset G$ that maps bijectively onto $G/A$, for instance any Sylow $3$-subgroup of $G$. We may as well assume $x\in H$, and we denote by $z$ a second generator. Now, $H$ acts by conjugation on $A$ and hence also on its character group $\widehat{A}$, and with this picture in mind, we have:

\begin{proposition}
The irreducible representations of $\Gamma$ are labeled by pairs $(O,\rho)$, where:
\begin{enumerate}
\item $O$ is an orbit of the $H$-action on the character group $\widehat{A}$.

\item Having fixed an element $\chi=\chi_O\in O$ for each such orbit $O$, we denote by $H_\chi$ the isotropy group of $\chi$ under this action.

\item $\rho$ ranges over the irreducible representations of $H_\chi$.
\end{enumerate}
\end{proposition}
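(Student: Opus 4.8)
The plan is to recognize this as an instance of the classical little-groups method of Wigner and Mackey, in the clean form valid for a semidirect product with abelian kernel, exactly as presented in Serre \cite{ser}. The first task is therefore to exhibit $\Gamma$ as a genuine semidirect product $\Gamma=A\rtimes H$. Since $A=\Gamma'$ is normal with $|A|=16$, while $H$ is chosen as a Sylow $3$-subgroup and so has order $9$, the coprimality $\gcd(16,9)=1$ forces $A\cap H=\{1\}$; combined with the hypothesis that $H$ maps bijectively onto $\Gamma/A$, this gives $\Gamma=AH$ with the decomposition being semidirect. This splitting is the crucial enabling step, as explained at the end.

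Next I would run Clifford theory relative to $A$. Let $\theta$ be an irreducible representation of $\Gamma$. As $A$ is abelian, the restriction $\theta|_A$ is a sum of characters of $A$, and by Clifford's theorem the characters occurring form a single $\Gamma$-orbit in $\widehat{A}$; because $A$ acts trivially on itself by conjugation, this orbit is in fact an orbit $O$ of the $H$-action. This produces the first label of the pair. Fixing $\chi=\chi_O\in O$ with stabilizer $H_\chi\subset H$, one forms the inertia group $\Gamma_\chi:=A\rtimes H_\chi$, which is precisely the stabilizer of $\chi$ inside $\Gamma$.

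It then remains to parametrize the irreducibles with prescribed $A$-content $O$, and here I would invoke the standard conclusion of Serre's classification. Because $\chi$ is $H_\chi$-invariant and the extension splits, $\chi$ extends to a one-dimensional character $\tilde\chi$ of $\Gamma_\chi$; inflating an irreducible $\rho$ of $H_\chi$ along $\Gamma_\chi\to H_\chi$ to a representation $\tilde\rho$ and forming $\mathrm{Ind}_{\Gamma_\chi}^\Gamma(\tilde\chi\otimes\tilde\rho)$ yields an irreducible representation of $\Gamma$. The three verifications are routine: irreducibility of the induced module via Mackey's criterion (using that $\Gamma_\chi$ is exactly the inertia group of $\chi$), distinctness of the pairs $(O,\rho)$ (the $A$-content recovers $O$, and Frobenius reciprocity recovers $\rho$), and exhaustiveness by a dimension count against $\sum_\theta\dim(\theta)^2=|\Gamma|$. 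Together these show the correspondence $(O,\rho)\mapsto\theta$ is a bijection onto $\mathrm{Irr}(\Gamma)$.

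The main obstacle is less one of difficulty than of confirming that no twisting cocycle intervenes when $\chi$ is extended from $A$ to the inertia group $\Gamma_\chi$. This is exactly where the splitting $\Gamma=A\rtimes H$ does the work: for a nonsplit extension the irreducibles lying over $O$ would instead be indexed by \emph{projective} representations of $H_\chi$ carrying a fixed $2$-cocycle, and the clean bijection in the statement would fail. The coprimality $\gcd(16,9)=1$ guarantees the complement $H$ and hence rules this out, so that, granting the splitting, the assertion is verbatim Serre's classification of the irreducibles of a semidirect product with abelian normal subgroup.
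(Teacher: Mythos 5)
Your proposal is correct and follows essentially the same route as the paper: the authors simply cite Serre's little-groups classification for a semidirect product $\Gamma=A\rtimes H$ with abelian normal subgroup, describing the associated representation as the induction to $\Gamma$ of $\chi\otimes\rho$ extended to $AH_\chi$, which is precisely the construction you spell out. Your added verifications (the splitting via coprimality of $16$ and $9$, Clifford theory, Mackey's criterion, and the counting argument) are the standard details behind that citation and are all accurate.
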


\begin{proof}
This is indeed clear from \cite{ser}, with the representation associated to $(O,\rho)$ being obtained by first extending the character $\chi=\chi_O$ to $AH_\chi$, then constructing the representation $\chi\otimes\rho$ of this same group, and then inducing up to $G=AH$.
\end{proof}

We conclude that the 3-dimensional irreducible representations of $\Gamma$ correspond to the pairs $(O,\rho)$, where $O$ is a size $3$ orbit of the action of $H\simeq\mathbb Z_3^2$ on $\widehat{A}\simeq\mathbb Z_2^4$ and
$\rho$ is a character of the isotropy group, which is isomorphic to $\mathbb Z_3$, of a fixed character in $O$.

On the other hand, we have as well the following result:

\begin{proposition}
The group $\widehat{A}$ breaks up under the action of $H$ into four orbits, as follows:
\begin{enumerate}
\item A singleton, consisting of the trivial character.

\item A size $3$ orbit $O_1$, consisting of the characters fixed by $xy$.

\item A size $3$ orbit $O_2$, consisting of the characters fixed by $xy^2$.
  
\item All the other characters, making up a single size $9$ orbit.
\end{enumerate}
\end{proposition}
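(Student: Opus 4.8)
The plan is to reduce the statement to a concrete computation of the orbits of $H\cong\mathbb Z_3^2$ on the four-dimensional $\mathbb F_2$-vector space $\widehat A$, exploiting the module structure supplied by Proposition 4.4. Since $A=\Gamma'$ is abelian, the conjugation action of $H\cong\Gamma/A=\langle x,y\rangle$ on $A$ factors through $\Gamma/A$ and is exactly the one described in Proposition 4.4, reduced modulo $2$ (so that $-1$ becomes $1$). Thus I would identify $A\cong\mathbb F_2^2\otimes\mathbb F_2^2$ as an $\mathbb F_2[H]$-module on which $x,y$ act by $\bar M\otimes1$ and $1\otimes\bar M$, where $\bar M\in GL_2(\mathbb F_2)$ is the mod-$2$ reduction of the matrix in Proposition 4.4: it has order $3$, is symmetric, has irreducible characteristic polynomial $\lambda^2+\lambda+1$ over $\mathbb F_2$, and cyclically permutes the three nonzero vectors of $\mathbb F_2^2$. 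Because $\bar M$ is conjugate to $\bar M^{-1}$ in $GL_2(\mathbb F_2)$ (same irreducible characteristic polynomial), the module $A$ is self-dual, so $\widehat A\cong A$ as $H$-sets and it suffices to analyse the $H$-action on $A$ itself.

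The heart of the matter is to record, for each order-$3$ cyclic subgroup of $H$, the dimension of its fixed space on $A$, i.e. the size of $\ker(h-1)$ for a generator $h$. The conceptual point is that over $\mathbb F_4$ the operator $\bar M$ has eigenvalues $\omega,\omega^2$, so $x=\bar M\otimes1$ and $y=1\otimes\bar M$ have eigenvalues $\omega,\omega^2$ each with multiplicity two and never equal to $1$, whereas $xy=\bar M\otimes\bar M$ and $xy^2=\bar M\otimes\bar M^2$ acquire the eigenvalue $\omega\cdot\omega^{-1}=1$ with multiplicity two. Concretely this is a short linear computation over $\mathbb F_2$ showing that $\ker(x-1)=\ker(y-1)=0$ while $\ker(xy-1)$ and $\ker(xy^2-1)$ are each two-dimensional. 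Hence $x,y$ and their squares fix only $0$, the two nontrivial elements of $\langle xy\rangle$ fix one $4$-element subspace, and the two nontrivial elements of $\langle xy^2\rangle$ fix a different $4$-element subspace.

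From these fixed-point counts the orbit structure falls out by elementary bookkeeping. The trivial character is the unique $H$-fixed point, giving the singleton (1). The three nonzero vectors fixed by $xy$ form a single orbit: the residual action of $\mathbb Z_3\cong H/\langle xy\rangle$ on them cannot be trivial, since otherwise they would be fixed by all of $H$, in particular by $x$, which fixes only $0$; therefore this action is transitive and yields the size-$3$ orbit $O_1=\widehat A^{xy}\setminus\{0\}$ of (2). The same argument with $xy^2$ gives $O_2$ as in (3), and $O_1\cap O_2=\emptyset$ because a common nonzero fixed vector would be fixed by $\langle xy,xy^2\rangle=H$. The remaining $16-1-3-3=9$ characters are fixed by no nontrivial element of $H$, as every such element lies in one of the four order-$3$ subgroups and we have accounted for all of their fixed vectors; thus $H$ acts freely on them and, since $|H|=9$, they constitute the single size-$9$ orbit of (4). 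A useful cross-check for this last orbit is the identification $A\cong M_2(\mathbb F_2)$ sending $v\otimes w$ to $vw^T$, under which $x,y$ act by left and right multiplication by $\bar M$ and the nine rank-$1$ matrices, on which $H$ acts through a product of two simply transitive $\mathbb Z_3$-actions, form exactly this orbit.

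The main obstacle is the fixed-point computation of the second paragraph, namely recognising that it is the mixed elements $xy,xy^2$ rather than $x,y$ that acquire nontrivial fixed spaces; this is precisely where the tensor-square structure of $A$ enters, through the cancellation $\omega\cdot\omega^{-1}=1$. Everything else is routine: the self-duality of $A$ lets one ignore the contragredient nature of the action on $\widehat A$, and the remaining steps are pure orbit-counting.
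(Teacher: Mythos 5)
Your argument is correct and is essentially the proof the paper intends: the paper's one-line proof defers to the explicit description of the $H$-action on $A\cong(\mathbb F_2^2)^{\otimes 2}$ via $m\otimes I_2$ and $I_2\otimes m$ with $m=\left(\begin{smallmatrix}0&1\\1&1\end{smallmatrix}\right)$, and you simply carry that computation out in full, via the eigenvalue analysis over $\mathbb F_4$ and the fixed-point/orbit count. Your write-up supplies the details (in particular the self-duality of the module and the observation that only the ``mixed'' elements $xy$, $xy^2$ have nonzero fixed spaces) that the paper leaves implicit.
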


\begin{proof}
This follows indeed from the explicit description of the action of $H$ on $A$, given in our results above.
\end{proof}

Now observe that for all three-dimensional irreducible representations corresponding to the cases (b,c), by \cite{ser} these have traces zero on $x,y$. It follows that these representations assign spectrum $\{w^i\}$ to $x,y$, and hence are contained in $X_G$, and so:

\begin{proposition}
The space $X_G$ has $6$ solid components isomorphic to $PU_3$, each consisting of irreducible representations of $\Gamma$ attached to one of the pairs $(i,\rho)$, where:
\begin{enumerate}
\item $O_i$ consists of the $A$-characters fixed by $xy^i$, $i=1,2$.

\item $\rho$ is one of the three characters of $\langle xy^i\rangle\simeq\mathbb Z_3$.
\end{enumerate}
We agree to denote the respective components of $X_G$ by $X_{i,\rho}$.
\end{proposition}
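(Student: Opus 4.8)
The plan is to read off the proposition from Propositions 4.7 and 4.8 together with the trace computation recorded just above. Recall that a point of $X_G$ is a $3$-dimensional representation $\pi$ of $\Gamma$ whose generators $x,y$ are sent to unitaries with eigenvalues uniformly distributed among the cube roots of unity; since $x^3=y^3=1$ in $\Gamma$ we have $\pi(x)^3=\pi(y)^3=1$, so this quasi-flatness requirement is equivalent to the trace conditions $tr(\pi(x))=tr(\pi(y))=0$. By definition the solid components are the $U_3$-conjugation orbits of $3$-dimensional irreducible representations of $\Gamma$ lying in $X_G$, so the first step is to isolate these irreducibles.

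First I would use the dimension formula behind Proposition 4.7: the irreducible attached to a pair $(O,\rho)$ has dimension $[H:H_\chi]\cdot\dim\rho$, so it is $3$-dimensional exactly when $O$ has size $3$ and $\rho$ is a character of the order-$3$ isotropy group $H_\chi$. By Proposition 4.8 the size-$3$ orbits are precisely $O_1$ and $O_2$, with isotropy groups $\langle xy\rangle$ and $\langle xy^2\rangle$, each isomorphic to $\mathbb Z_3$ and hence carrying $3$ characters $\rho$. This produces the $6$ candidates $(i,\rho)$, $i\in\{1,2\}$. That each of them actually belongs to $X_G$ is the content of the remark preceding the statement: since $\Gamma/A\simeq\mathbb Z_3^2$ is abelian the class of $x$ in $\Gamma/A$ is conjugation-invariant, and as its image avoids $\langle\overline{xy^i}\rangle$, no conjugate of $x$ (nor of $y$) lands in $AH_\chi$, whence the Frobenius induced-character formula forces $tr(\pi(x))=tr(\pi(y))=0$.

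It remains to describe each orbit and to see that they are the connected components. For a fixed $(i,\rho)$ the stabilizer of $\pi$ under conjugation is the unitary part of its commutant, which by Schur's lemma (the representation being irreducible) is the scalar subgroup $\mathbb T\subset U_3$; hence the orbit is $U_3/\mathbb T\cong PU_3$. The $6$ orbits are pairwise disjoint, since distinct pairs $(i,\rho)$ label non-isomorphic representations by the classification recalled from \cite{ser}, while conjugate representations are isomorphic. Finally, exactly as in Proposition 3.1, the representation space of a finite group is rigid, so the isomorphism class of an irreducible representation is both open and closed and therefore coincides with its connected $U_3$-orbit. This yields the $6$ solid components $X_{i,\rho}\cong PU_3$.

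I expect the only genuinely substantive point to be the vanishing $tr(\pi(x))=tr(\pi(y))=0$, which certifies that all six irreducibles are quasi-flat; this rests on the induced-character formula and on the abelianity of $\Gamma/A$, while everything else is the bookkeeping of Schur's lemma and of the rigidity of finite-group representations.
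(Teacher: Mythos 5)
Your proof is correct and follows essentially the same route as the paper, which simply declares the proposition to follow from Propositions 4.7 and 4.8 together with the preceding trace-zero remark; you have merely filled in the details (the dimension count $[H:H_\chi]\dim\rho$, the vanishing of the induced character on $x,y$ because $\bar x,\bar y\notin\langle\overline{xy^i}\rangle$ in $\Gamma/A$, Schur's lemma for the $PU_3$ identification, and rigidity for the component decomposition). No discrepancies to report.
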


\begin{proof}
This follows indeed from the above discussion.
\end{proof}

Summarizing, $X_G$ has 12 components, falling into two different
classes. With these ingredients in hand, we can now prove the following negative result:

\begin{theorem}
The canonical representation $\pi:\Gamma\to M_3(C(X_G))$ cannot be stationary with respect to a measure of type
$$\mu=\sum_{\tau\in S_3}\alpha_\tau\mu_{X_\tau} +\sum_{i\rho}\beta_{i,\rho} \mu_{X_{i,\rho}}$$
and this, for any choice of scalars $\alpha_{\bullet},\beta_{\bullet}\geq 0$ summing up to $1$.
\end{theorem}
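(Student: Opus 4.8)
The plan is to collapse stationarity into a single identity of class functions on $\Gamma$, and then to exhibit an irreducible representation that no measure of the prescribed form can reproduce. Recall that for the group dual $G=\widehat\Gamma$ the Haar integral is the canonical trace on $C^*(\Gamma)$, that is $\int_G u_g=\delta_{g,1}$. On each homogeneous component of $X_G$ the representation $\pi_x$ ranges over a single $U_3$-conjugacy class, so $x\mapsto \frac13 Tr(\pi_x(g))$ is constant there, equal to $\frac13\chi(g)$ where $\chi$ is the character of the representation carried by that component. Integrating against $\mu$ and clearing the factor $\frac13$, the stationarity requirement $\int_G=(tr\otimes\int_X)\pi$ becomes the demand that the class function
$$\Phi(g):=\sum_{\tau\in S_3}\alpha_\tau\,\chi_\tau(g)+\sum_{i,\rho}\beta_{i,\rho}\,\chi_{i,\rho}(g)$$
satisfy $\Phi(g)=3\,\delta_{g,1}$ for all $g\in\Gamma$, where $\chi_\tau$ is the character of the loose component $X_\tau$ (a sum of three characters of $\Gamma_{ab}$) and $\chi_{i,\rho}$ is the character of the $3$-dimensional irreducible attached to the solid component $X_{i,\rho}$.

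Next I would locate the obstruction. By the four-orbit decomposition of $\widehat A$ established above, besides the trivial character and the two size $3$ orbits $O_1,O_2$ there is a single free orbit of size $9$. Feeding this orbit into the Clifford-theoretic labeling recalled above (following \cite{ser}), with trivial isotropy group, yields a single irreducible $\pi_9$ of dimension $[\Gamma:A]=9$; the dimension count $9\cdot 1^2+6\cdot 3^2+1\cdot 9^2=144=|\Gamma|$ confirms that $\pi_9$ is the unique such representation and that $\mathrm{Irr}(\Gamma)$ is thereby exhausted. The decisive structural point is that $\pi_9$ occurs in no component of $X_G$: the loose components carry only characters of $\Gamma_{ab}$, and the solid components carry only the $3$-dimensional irreducibles coming from $O_1,O_2$. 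Being $9$-dimensional, $\pi_9$ cannot be a summand of any $3$-dimensional representation, and so appears in $X_G$ with multiplicity zero for every choice of weights.

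Finally I would pair both sides of $\Phi=3\,\delta_{\cdot,1}$ with $\chi_{\pi_9}$ in the usual inner product $\langle f_1,f_2\rangle=\frac{1}{|\Gamma|}\sum_g f_1(g)\overline{f_2(g)}$. On the left, orthogonality of irreducible characters forces $\langle\Phi,\chi_{\pi_9}\rangle=0$, since every $\chi_\tau$ and every $\chi_{i,\rho}$ is a combination of irreducibles distinct from $\pi_9$. On the right, $\langle 3\,\delta_{\cdot,1},\chi_{\pi_9}\rangle=\frac{3}{|\Gamma|}\,\overline{\chi_{\pi_9}(1)}=\frac{3\cdot 9}{144}\neq 0$. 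This contradiction is independent of the nonnegative scalars $\alpha_\bullet,\beta_\bullet$, and therefore rules out stationarity for every measure of the stated form.

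I expect the only genuinely delicate step to be the first one, namely carefully justifying that stationarity reduces to the class-function identity $\Phi=3\,\delta_{\cdot,1}$: this requires observing that the relevant matrix traces are constant on each $U_3$-orbit component and recording that the Haar functional on $C(\widehat\Gamma)$ is the canonical trace. Once this translation is secured, the remainder is purely representation-theoretic, amounting to the observation that the $9$-dimensional irreducible is structurally invisible to any model assembled from $3$-dimensional representations.
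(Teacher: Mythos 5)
Your proof is correct, and it takes a genuinely different route from the paper's. The paper argues computationally: using the identification $\widehat{A}\simeq(\mathbb F_2^2)^{\otimes 2}$ and the explicit action of $x,y$, it evaluates the putative stationarity identity at three specific elements of the derived subgroup $A$, obtaining the linear system $\alpha-\beta_1-\beta_2=\alpha+3\beta_1-\beta_2=\alpha-\beta_1+3\beta_2=0$ together with $\alpha+\beta_1+\beta_2=1$, which is inconsistent. You instead reformulate stationarity as the class-function identity $\Phi=3\,\delta_{\cdot,1}$ (your reduction is valid: the Haar state on $C^*(\Gamma)$ is indeed $u_g\mapsto\delta_{g,1}$, and the normalized trace is constant on each component since each $X_\tau$ and each $X_{i,\rho}$ is a single $U_3$-conjugacy class), and then pair against the character of the unique $9$-dimensional irreducible $\pi_9$ coming from the free orbit of $H$ on $\widehat A$; your dimension count $9\cdot 1^2+6\cdot 3^2+1\cdot 9^2=144$ checks out, and orthogonality gives $0$ on one side versus $27/144$ on the other. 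This is the conceptual explanation of the paper's numerical obstruction: the Plancherel expansion of $\delta_1$ requires every irreducible of $\Gamma$, and the $9$-dimensional one is invisible to a model built from $3$-dimensional representations. Your argument also yields a slightly stronger conclusion for free -- it rules out \emph{every} probability measure on $X_G$, not only the convex combinations of uniform measures in the statement, and it never uses the nonnegativity of the weights -- at the modest cost of invoking the full Clifford-theoretic classification of $\mathrm{Irr}(\Gamma)$ rather than just the two size-$3$ orbits.
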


\begin{proof}
We recall that we are denoting by $H\simeq\mathbb Z_3^2$ a
complement of $A$ in $\Gamma$, i.e. a subgroup of $\Gamma$ that maps
isomorphically onto $G/A$. We also assume that $x\in H$ and fix another generator $z\in H$ in the same class as $y$ modulo $A$. Now denote:
$$\alpha=\sum_\tau\alpha_\tau,\ \beta_i=\sum_\rho\beta_{i,\rho}\text{ for }i=1,2$$

For any $a\in A$ the contribution of the loose components to the normalized trace of $\pi(a)$ is $\alpha$, while the contribution of the solid components $X_{i,\rho}$ for $\rho$ ranging over $\widehat{\langle xz^i\rangle}$ is:
$$\beta_i\sum_{\chi\in o_i}\chi(a)$$

We identify $\widehat{A}\simeq(\mathbb{F}_2^2)^{\otimes 2}$, and fix bases $\{p_i\}$ and $\{q_i\}$ for the two tensor components with respect to which the conjugation by $x,y$ acts on $A$ as $m\otimes I_2,I_2\otimes m$, where:
$$m=\begin{pmatrix}0&1\\1&1\end{pmatrix}$$
 
It follows from this choice and a simple computation that the above expressions, in the order $i=1$ and $i=2$, are:
\begin{enumerate}
\item $-1$ and $-1$ for $a=p_1\otimes q_1$.

\item $3$ and $-1$ for $a=p_1\otimes q_1+p_2\otimes q_2$.

\item $-1$ and $3$ for $a=p_1\otimes q_2+p_2\otimes q_1$.
\end{enumerate}

In order for the stationarity property to hold, we must find variables $\alpha,\beta_1,\beta_2$ satisfying $\alpha+\beta_1+\beta_2=1$ and:
$$\alpha-\beta_1-\beta_2 = \alpha+3\beta_1-\beta_2 = \alpha-\beta_1+3\beta_2=0$$

But this is impossible, and we are done.
\end{proof}

\section{Inner faithfulness}

We specialize now to the case of the metabelian groups $\Gamma$ with $M=2$, with the two generators denoted $x,y$. Such a group will be an extension of its quotient $\Gamma\to\mathbb Z_K^2$ by some abelian normal subgroup $A$, which is in turn a quotient of $(\mathbb Z_K^{*2})'$.
 
Proving that $\pi:\Gamma\to M_K(C(X_G))$ is inner faithful entails showing that for every $g\neq1$ there is some $p\in X_G$ such that $\pi_p(g)\neq1$. Note that it suffices to do this for $g\in A$. Indeed, every $g\not\in A$ acts non-trivially through some character of the quotient $\Gamma\to\mathbb Z_K^2$, and hence is non-trivial in a representation $\pi_p:\Gamma\to \mathbb{Z}_K^2\to M_K(\mathbb C)$, for some $p\in X_G$. 

We now describe the representations $\pi_p$ that will serve our purpose. We will need:

\begin{definition}
We say that a subgroup $H\simeq\mathbb Z_K$ of the quotient $\Gamma\to\mathbb Z_K^2$ is generic if it intersects neither $<x>$ nor $<y>$.
\end{definition}

Now let $\mathbb Z_K\simeq H\subset\mathbb Z_K^2$ be generic, and let
$\chi\in\widehat{A}$ be a character fixed by $H$ under the conjugation
action of $\Gamma_{ab}\simeq\mathbb Z_K^2$ on $\widehat{A}$. Then $\chi$ extends to a character of $\psi^{-1}H$, that we denote by the same symbol. We then have:

\begin{proposition}
The induced representation $\mathrm{Ind}_{\psi^{-1}(H)}^{\Gamma}(\chi)$ belongs to $X_G$.
\end{proposition}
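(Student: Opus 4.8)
We have a metabelian uniform group $\Gamma$ with $M=2$, generators $x,y$, sitting in an extension $1\to A\to\Gamma\to\mathbb{Z}_K^2\to 1$ where $A$ is abelian. We write $\psi:\Gamma\to\mathbb{Z}_K^2$ for the quotient map. We pick a generic $\mathbb{Z}_K\simeq H\subset\mathbb{Z}_K^2$ (one avoiding $\langle x\rangle$ and $\langle y\rangle$), and a character $\chi\in\widehat{A}$ fixed by $H$, extended to a character of $\psi^{-1}(H)$. The claim is that the induced representation $\mathrm{Ind}_{\psi^{-1}(H)}^{\Gamma}(\chi)$ lands in $X_G$.

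Let me think about what $X_G$ actually requires. $X_G$ is the universal quasi-flat model space. By Definition 1.8 / Theorem 1.7(3), a point of $X_G$ is a unitary representation $\rho:\Gamma\to U_K$ that is quasi-flat: the eigenvalues of each $\rho(g_i)$ (here $\rho(x)$ and $\rho(y)$) are uniformly distributed among the $K$-th roots of unity. So I need to check two things: (a) the induced representation is $K$-dimensional, and (b) both $\rho(x)$ and $\rho(y)$ have eigenvalues equal to the full set of $K$-th roots of unity, each with multiplicity one.

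**Dimension.** The index of $\psi^{-1}(H)$ in $\Gamma$ equals the index of $H$ in $\mathbb{Z}_K^2$, which is $K$ (since $H\simeq\mathbb{Z}_K$ and $\mathbb{Z}_K^2$ has order $K^2$). Since $\chi$ is a character (1-dimensional), the induced representation has dimension $[\Gamma:\psi^{-1}(H)]\cdot 1 = K$. Good, so (a) holds automatically.

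**The eigenvalue/quasi-flatness condition.** This is the real content. Let me think about the structure. The quotient $\Gamma\to\mathbb{Z}_K^2$ factors as $\psi^{-1}(H)\subset\Gamma$ mapping onto $H\subset\mathbb{Z}_K^2$. The induced representation, restricted to... hmm, let me think about $\rho(x)$. The key tool here is Mackey-type analysis of induced representations. Since $H$ is generic, $x\notin H$ (its image $\bar{x}$ generates $\langle x\rangle$, which intersects $H$ trivially). The element $\bar{x}$ acts on the coset space $\mathbb{Z}_K^2/H$, which has $K$ elements. Because $H$ is generic — it meets $\langle x\rangle$ only in the identity — the image $\bar{x}$ of $x$ acts on $\mathbb{Z}_K^2/H$ as a single $K$-cycle (it generates a transversal). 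This is the crucial use of genericity.

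Let me sketch the computation. Choose coset representatives $t_0=1,t_1,\dots,t_{K-1}$ for $\psi^{-1}(H)\backslash\Gamma$, which I can take to be lifts of $1,\bar{x},\bar{x}^2,\dots,\bar{x}^{K-1}$, i.e. powers of $x$. In the induced representation, $\rho(x)$ permutes these cosets cyclically (because $\bar{x}$ is a $K$-cycle on $\mathbb{Z}_K^2/H$), and after $K$ steps one returns to the starting coset picking up a scalar $\chi(x^K)$ — where $x^K\in\psi^{-1}(H)$ since $\bar{x}^K=1$, indeed $x^K\in A$, and the scalar is $\chi(x^K)$, a $K$-th root of unity. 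Therefore $\rho(x)$ is (in a suitable basis) a weighted cyclic-shift matrix, a scalar multiple of the $K\times K$ cyclic permutation up to the accumulated phase; its $K$-th power is the scalar $\chi(x^K)$. Hence the eigenvalues of $\rho(x)$ are the $K$ distinct $K$-th roots of $\chi(x^K)$, which are uniformly distributed among the $K$-th roots of unity (all $K$ of them, each once). The identical argument applies to $y$, again using that $H$ is generic (so $\bar{y}$ is also a $K$-cycle on $\mathbb{Z}_K^2/H$), giving eigenvalues of $\rho(y)$ uniformly distributed. This establishes quasi-flatness.

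**The main obstacle.** The delicate point is justifying that $\bar{x}$ (and $\bar{y}$) acts as a single $K$-cycle on the transversal $\mathbb{Z}_K^2/H$, and then organizing the induced-representation matrix so the $K$-th-power-is-a-scalar structure is transparent. The plan is to reduce the eigenvalue claim to a clean statement: for a generic cyclic subgroup $H$ of $\mathbb{Z}_K^2$, any element outside $H$ whose image generates a complement acts on $\mathbb{Z}_K^2/H$ with a single orbit. Genericity ($H\cap\langle x\rangle=H\cap\langle y\rangle=1$) is exactly what forces $\bar{x}$ and $\bar{y}$ to generate transversals, hence single $K$-cycles. One must also verify that the phase $\chi(x^K)$ is genuinely a $K$-th root of unity, which follows since $A$ has exponent dividing $K$ (as $\Gamma$ is a quotient of $\mathbb{Z}_K^{*2}$, all elements have order dividing $K$) so $\chi$ takes values in $K$-th roots of unity. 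Once the cyclic-shift structure is in place, the eigenvalue computation is the standard spectrum of a circulant-type matrix and presents no further difficulty.
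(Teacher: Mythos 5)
Your proof is correct and follows essentially the same route as the paper's: genericity of $H$ makes $\langle x\rangle$ and $\langle y\rangle$ into transversals for $\psi^{-1}(H)$ in $\Gamma$, so in the induced representation $x$ and $y$ act as $K$-cycles whose eigenvalues are the $K$-th roots of unity, each with multiplicity one. One small correction: the $K$ distinct $K$-th roots of a \emph{nontrivial} $K$-th root of unity are $K^2$-th roots of unity rather than $K$-th roots of unity, so your closing phase argument really needs $\chi(x^K)=1$ --- which holds simply because $x^K=1$ in $\Gamma$ (a quotient of $\mathbb Z_K^{*2}$), not merely because $\chi$ takes values in $K$-th roots of unity.
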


\begin{proof}
Since $H$ is generic, both $<x>$ and $<y>$ are systems of representatives for the cosets of $\Gamma$ modulo $\psi^{-1}(H)$, and the definition of the induced representation then shows that we can find a basis for it on which $x$ (or $y$) acts as a cycle of length $K$. It follows that its eigenvalues are the $K$-th roots of unity, each with multiplicity one.
\end{proof}

In conclusion, if $1\neq a\in A$ is not trivialized by the character $\chi$ fixed by the generic subgroup $H\subset \Gamma_{ab}$, then it cannot be in the kernel of $\pi:\Gamma\to M_K(C(X_G))$. Thus:

\begin{proposition}
Suppose that the characters of $A$ whose isotropy group in $\Gamma_{ab}$
contains some generic subgroup generate $\widehat{A}$. Then $\pi:\Gamma\to M_K(C(X_G))$ is inner faithful.
\end{proposition}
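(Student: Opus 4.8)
The plan is to establish inner faithfulness in its concrete group-dual guise. Since $\pi$ arises from the universal quasi-flat representation $\rho:\Gamma\to C(X_G,U_K)$, the discussion following Definition 1.5 tells us that $\pi$ is inner faithful precisely when $\rho$ is injective, i.e. when the family $\{\pi_p\}_{p\in X_G}$ separates the points of $\Gamma$. By the reduction recorded at the beginning of the section, it suffices to separate the identity from every nontrivial $a\in A$: any $g\notin A$ is already detected through a character of the quotient $\Gamma\to\mathbb Z_K^2$.

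So I would fix $1\neq a\in A$ and aim to produce a point $p\in X_G$ with $\pi_p(a)\neq1$. First I would invoke the hypothesis: let $S\subset\widehat A$ be the set of characters whose isotropy group in $\Gamma_{ab}$ contains a generic subgroup, and assume $S$ generates $\widehat A$. The claim is that $S$ already separates the points of $A$. Indeed, consider the evaluation homomorphism $\mathrm{ev}_a:\widehat A\to\mathbb T$, $\chi\mapsto\chi(a)$, whose kernel is a closed subgroup of $\widehat A$. If every $\chi\in S$ trivialized $a$, then $S\subseteq\ker(\mathrm{ev}_a)$, whence $\overline{\langle S\rangle}=\widehat A\subseteq\ker(\mathrm{ev}_a)$ and $\mathrm{ev}_a$ is trivial, i.e. $\chi(a)=1$ for all $\chi\in\widehat A$. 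Pontryagin duality for the finitely generated abelian group $A$ then forces $a=1$, a contradiction. Hence some $\chi\in S$ satisfies $\chi(a)\neq1$.

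With such a $\chi$ in hand, I would pick a generic subgroup $H\simeq\mathbb Z_K\subset\Gamma_{ab}$ fixing $\chi$, extend $\chi$ to $\psi^{-1}(H)$, and set $p=\mathrm{Ind}_{\psi^{-1}(H)}^{\Gamma}(\chi)$, which lies in $X_G$ by Proposition 5.2. It then remains to verify $\pi_p(a)\neq1$: since $A=\ker\psi\subseteq\psi^{-1}(H)$ and $A$ is normal, the restriction of the induced representation to $A$ is diagonal, its entries being the conjugate characters $\chi(g^{-1}ag)$ indexed by coset representatives $g$. Taking $g=1$ exhibits $\chi(a)\neq1$ as one of the eigenvalues of $\pi_p(a)$, so $\pi_p(a)\neq1$, completing the separation and hence the proof.

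The bulk of this argument is routine once Proposition 5.2 is available; the one step requiring genuine care is the passage from the generation hypothesis to separation of points. Here one must be attentive to whether $A$ is finite or free abelian of positive rank: in the latter case $\widehat A$ is a compact torus, so that ``generate'' should be read topologically and the containment $S\subseteq\ker(\mathrm{ev}_a)$ in the closed subgroup $\ker(\mathrm{ev}_a)$ is what forces $\overline{\langle S\rangle}=\widehat A\subseteq\ker(\mathrm{ev}_a)$. The remaining bookkeeping, namely that the fixed character $\chi$ appears undistorted among the diagonal entries of the induced representation, is standard and poses no real difficulty.
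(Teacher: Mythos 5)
Your proposal is correct and follows essentially the same route as the paper: reduce to elements $a\in A$, use the generation hypothesis to find a generically fixed character $\chi$ with $\chi(a)\neq 1$, and detect $a$ via the induced representation of Proposition 5.2. The paper's own proof is a one-line appeal to the preceding discussion; you have merely made explicit the two standard verifications it leaves implicit (that generation of $\widehat{A}$ forces separation of points of $A$, and that $\chi$ appears among the diagonal entries of the restriction of the induced representation to the normal subgroup $A$), both of which are handled correctly, including the remark that generation must be read topologically when $\widehat{A}$ is a torus.
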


\begin{proof}
This follows from the above discussion, because for every $1\neq a\in A$ there is some character $\chi$ fixed by some generic subgroup of $\Gamma_{ab}$ such that $\chi(a)\ne 1$.
\end{proof}

We can now prove a general inner faithfulness result, as follows:

\begin{proposition}
Let $K$ be a prime and $\Gamma$ a metabelian $(K,2)$-uniform group whose derived subgroup $A$ is torsion-free. Then, the canonical representation $\pi:\Gamma\to M_K(C(X_G))$ is inner faithful.
\end{proposition}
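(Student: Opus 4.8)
The plan is to apply Proposition~5.4, which reduces inner faithfulness to the following verification: the characters $\chi\in\widehat{A}$ whose isotropy group in $\Gamma_{ab}\simeq\mathbb Z_K^2$ contains some generic cyclic subgroup $H\simeq\mathbb Z_K$ must generate $\widehat{A}$ as a group. Since $K$ is prime, there are exactly $K+1$ cyclic subgroups of order $K$ in $\mathbb Z_K^2$, namely $\langle x\rangle$, $\langle y\rangle$, and the $K-1$ ``diagonal'' lines $\langle x y^i\rangle$ for $1\le i\le K-1$; precisely the latter $K-1$ lines are generic. So the task is to show that, taken together, the sets of characters fixed by these $K-1$ generic subgroups span $\widehat{A}$.

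First I would record the module structure. Because $\Gamma$ is metabelian $(K,2)$-uniform, its derived subgroup $A$ is a quotient of $(\mathbb Z_K^{*2})'\simeq(\mathbb{Z}^{K-1})^{\otimes 2}$, with the $\mathbb Z_K^2$-action given by the companion-type matrix $m$ from Proposition~4.4 acting on each tensor leg. The hypothesis that $A$ is torsion-free then forces $A$ to be a $\mathbb Z_K^2$-submodule of $(\mathbb{Z}^{K-1})^{\otimes 2}$ of full rank, so dually $\widehat{A}$ is a quotient module and it suffices to prove the spanning statement upstairs, for the full tensor-square module, and then push it down. Next I would diagonalize the action: over a field containing a primitive root of unity the generator $x$ acts on each $\mathbb Z^{K-1}$ factor with eigenvalues $w,w^2,\dots,w^{K-1}$ (the nontrivial $K$-th roots of unity, the root $1$ being excised precisely because we passed to the derived subgroup), and similarly for $y$. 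Thus on $\widehat{A}\otimes\mathbb{F}$ the pair $(x,y)$ acts by a direct sum of the $(K-1)^2$ one-dimensional weight spaces, on which $x$ acts by $w^a$ and $y$ by $w^b$ for $1\le a,b\le K-1$.

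The core computation is then to identify the fixed space of a generic subgroup $\langle xy^i\rangle$. A weight vector of weight $(a,b)$ is fixed by $xy^i$ exactly when $w^{a+ib}=1$, i.e. when $a+ib\equiv 0\pmod K$. As $i$ ranges over $1,\dots,K-1$ and $(a,b)$ over $\{1,\dots,K-1\}^2$, I would check that every weight $(a,b)$ with $a+ib\equiv 0$ for \emph{some} such $i$ is hit; since $b\not\equiv 0$, the congruence $a\equiv -i b$ is solvable for $i$, and one verifies $i$ lands in the admissible range $\{1,\dots,K-1\}$ because $a,b\not\equiv 0$ forces $i\not\equiv 0$. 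Hence \emph{every} weight space of $\widehat{A}\otimes\mathbb{F}$ lies in the fixed space of some generic $\langle xy^i\rangle$, so the union of these fixed spaces spans everything over $\mathbb F$, and descending to the integral lattice gives a finite-index, hence full, subgroup. I expect the main obstacle to be the bookkeeping in this last descent: passing from the fact that the relevant characters span $\widehat{A}\otimes\mathbb{F}$ to the genuine group-generation statement for $\widehat{A}$, which is where torsion-freeness of $A$ (ruling out small-prime pathologies that could prevent the integral span from being full) and the primality of $K$ (guaranteeing every nonzero $b$ is invertible mod $K$, so the congruence always solves) both get used essentially. Once the spanning is established, Proposition~5.4 closes the argument immediately.
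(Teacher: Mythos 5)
Your proposal follows the paper's proof in all essentials: reduce via Proposition 5.3 (you cite it as 5.4, but 5.4 is the statement being proved) to showing that the generically fixed characters generate $\widehat{A}$; identify the generic subgroups as the $\langle xy^i\rangle$, $1\le i\le K-1$; diagonalize the $\Gamma_{ab}$-action using the companion-type matrix of Proposition 4.4, so that each weight $(a,b)$ has $a,b\not\equiv 0 \pmod K$; and solve $a+ib\equiv 0\pmod K$ for $i$ using primality of $K$. That is exactly the computation at the heart of the paper's argument.

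The one step that does not work as you state it is the reduction to the universal module. Torsion-freeness of $A$ does not force $A$ to be a ``full-rank submodule'' of $(\mathbb Z^{K-1})^{\otimes 2}$: $A$ is a \emph{quotient} of that module, and a torsion-free quotient of $\mathbb Z^{(K-1)^2}$ can have any rank $\le (K-1)^2$ (the paper accordingly only asserts that $\widehat A$ is a torus of dimension $\le(K-1)^2$). Consequently ``prove the spanning statement upstairs and push it down'' is not a valid reduction: $\widehat A$ sits inside the dual of the universal module as a possibly proper invariant subtorus, and the generically fixed characters that generate the big dual need not lie in $\widehat A$ at all. The repair is to run your weight computation directly on the complexified Lie algebra $V$ of $\widehat A$: since $A$ is a quotient of the universal module, $x,y$ still act on $V$ with eigenvalues among the nontrivial $K$-th roots of unity, $V$ is a sum of weight spaces (the paper instead takes an invariant complement $W$ of the Lie algebra of $T$ inside $V$, which avoids any multiplicity considerations), and every weight is annihilated by some generic $x^iy^j$; connectedness of the torus $\widehat A$ then yields $T=\widehat A$ outright, with no ``finite index'' step needed. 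With that adjustment your argument coincides with the paper's.
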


\begin{proof}
We agree to call ``generically fixed'' the characters of $A$ whose isotropy group in $\Gamma_{ab}$ contains a generic subgroup of the latter. According to the above results and to our torsion-freeness assumption, $\widehat{A}$ is a torus of dimension $\leq(K-1)^2$. Let $T\subset\widehat{A}$ be the subgroup generated by generically fixed characters. By Proposition 5.3, it suffices to prove that $T$ cannot be a proper subgroup.

Now consider the action of $\Gamma_{ab}$ on the complexified Lie algebra $V$ of $\widehat{A}$. Proposition 4.4 above shows that the generators $x,y$ both act with eigenvalues $w^i$ for for some values $1\leq i\leq K-1$, where $w=e^{2\pi i/K}$. It follows from this that for any non-trivial subspace $W\subset V$ which is invariant under $\Gamma_{ab}$ we can find $x^iy^j$ for some choice of $1\leq i,j\leq K-1$ that fixes a non-trivial vector of $W$. Applying this to a complement $W$ of the complexified Lie algebra of $T$, we conclude that $W$ must be trivial and hence $T=\widehat{A}$, as desired.
\end{proof}

We now turn to the case where the derived subgroup has torsion. We reprise our notation from above, with $A$ standing for the abelian derived subgroup of $\Gamma$.

The torsion subgroup $A_{tors}$ splits up as a direct sum of finite abelian
$q$-groups for various primes $q$. We denote these summands
by $A_q$, and call them $q$-primary components of $A$. We have the following primary analogue of Proposition 5.4 above:

\begin{proposition}
Let $K$ be a prime and $\Gamma$ a metabelian $(K,2)$-uniform group with $q$-primary derived subgroup $A$ for some prime $q\ne K$. Then, the canonical representation $\pi:\Gamma\to M_K(C(X_G))$ is inner faithful.
\end{proposition}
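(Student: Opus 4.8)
The plan is to verify the hypothesis of Proposition 5.3, i.e.\ that the \emph{generically fixed} characters---those $\chi\in\widehat A$ whose $\Gamma_{ab}$-isotropy group contains a generic cyclic subgroup---generate $\widehat A$. Since $K$ is prime, the generic subgroups are precisely the $K-1$ ``diagonal'' lines $H_j=\langle xy^j\rangle$, $1\le j\le K-1$, in $\Gamma_{ab}\simeq\mathbb Z_K^2$, the only non-generic ones being $\langle x\rangle$ and $\langle y\rangle$. Let $T\subseteq\widehat A$ be the subgroup generated by the generically fixed characters; it is $\Gamma_{ab}$-invariant, since $\Gamma_{ab}$ is abelian and hence carries each fixed-point set $\widehat A^{H_j}$ into itself. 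Setting $Q=\widehat A/T$, a finite abelian $q$-group equipped with a $\mathbb Z_K^2$-action, it suffices to prove that $Q=0$.

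The first use of the hypothesis $q\neq K$ is that coprimality makes the fixed-point functor exact on finite $q$-groups: for each $H_j$ the averaging element $\frac1K\sum_{h\in H_j}h$ is a well-defined idempotent, as $K$ is invertible modulo the exponent of $Q$, so $Q^{H_j}=\widehat A^{H_j}/T^{H_j}$. By construction $\widehat A^{H_j}\subseteq T$, whence $Q^{H_j}=0$ for every generic $j$. I then extend scalars to $\overline{\mathbb F_q}$, which contains a primitive $K$-th root of unity $\zeta$ precisely because $q\neq K$, and decompose $Q\otimes\overline{\mathbb F_q}=\bigoplus_{(a,b)}Q_{(a,b)}$ into joint eigenspaces, on which $x$ acts by $\zeta^a$ and $y$ by $\zeta^b$. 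The vanishing $Q^{H_j}=0$ forces $Q_{(a,b)}=0$ whenever $a+jb\equiv0\ (\mathrm{mod}\ K)$ for some $1\le j\le K-1$; a short bookkeeping check shows that the only eigenspaces \emph{not} eliminated this way are the ``pure'' ones with $(a,b)=(a,0)$, $a\neq0$, or $(0,b)$, $b\neq0$, the trivial eigenspace $(0,0)$ being itself killed.

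To remove these remaining eigenspaces I invoke Proposition 4.4, where $x$ and $y$ act on $A$ through $m\otimes I$ and $I\otimes m$ with $m$ of characteristic polynomial $1+t+\cdots+t^{K-1}$. Its value at $t=1$ is $K$, which is nonzero in $\mathbb F_q$ since $K$ and $q$ are distinct primes, so $1$ is not an eigenvalue of $m$ modulo $q$. As $Q$ is obtained from this universal module by passing to duals, submodules and quotients, all of which keep the eigenvalues of $x$ and $y$ among the \emph{nontrivial} $K$-th roots of unity, neither $x$ nor $y$ has eigenvalue $1$ on $Q\otimes\overline{\mathbb F_q}$. In eigenspace terms this is exactly $Q_{(0,b)}=0$ for all $b$ and $Q_{(a,0)}=0$ for all $a$, cancelling precisely the leftovers from the previous step. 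Hence $Q\otimes\overline{\mathbb F_q}=0$, so $Q=0$ and $T=\widehat A$, and Proposition 5.3 yields the inner faithfulness of $\pi$.

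The main obstacle is the twofold role of the assumption $q\neq K$: it is needed first to make the $H_j$-fixed-point functor exact, so that the vanishing of fixed points in $\widehat A$ descends to the quotient $Q$, and second to ensure $1+\cdots+1=K$ is invertible in $\mathbb F_q$, which is what forces $x$ and $y$ to be fixed-point-free. The conceptual heart is the complementary counting: the $K-1$ generic subgroups handle all ``mixed'' characters $(a,b)$ with $ab\neq0$, while the two non-generic subgroups $\langle x\rangle,\langle y\rangle$ handle, via Proposition 4.4, the ``pure'' ones---so that together they account for every nontrivial character of $A$.
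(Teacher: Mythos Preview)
Your argument is correct and follows essentially the same route as the paper: both introduce $T\subset\widehat A$, use the coprimality $q\ne K$ (you via the averaging idempotent, the paper via vanishing of $H^1$ in the long exact cohomology sequence---the same fact) to conclude that $Q=\widehat A/T$ has no $H_j$-fixed points, and then run an $\mathbb F_q$-eigenvalue argument based on Proposition~4.4 to force $Q=0$. The only cosmetic difference is that the paper passes to the predual $B\subset A$ and works on $B/qB$ while you stay on the character side with $Q\otimes\overline{\mathbb F_q}\cong (Q/qQ)\otimes\overline{\mathbb F_q}$; these are dual pictures with identical content.
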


\begin{proof}
The proof follows the same plan as that of the analogous result for torsion-free $A$, working over finite fields rather than $\mathbb C$. Once more, we denote by $T\subset\widehat{A}$ the group generated by generically fixed characters, and seek to show that $T=\widehat{A}$.

Assume that this is not true. Then $\widehat{A}/T$ is a non-trivial $q$-group. Our claim is that a generically fixed element in $\widehat{A}/T$ lifts to a generically fixed element in $\widehat{A}$. 

To see this, consider the following short exact sequence of abelian groups:
$$1\to T\to\widehat{A}\to\widehat{A}/T\to 1$$

Let also $\mathbb Z_K$ be a group generated by some $x^iy^j$, with $1\leq i,j\leq K-1$. We have then a long exact sequence as follows, of cohomology groups over $<x^iy^j>$:
$$0\to H^0(T)\to H^0(\widehat{A})\to H^0(\widehat{A}/T)\to H^1(T)\to\ldots$$

The elements fixed by $x^iy^j$ correspond to elements of the $0$-th cohomology groups. On the other hand, note that all $H^p(T)$, $p\ge 1$ vanish because they are annihilated by both $K=|\langle x^iy^j\rangle|$ and $|T|$ (a power of $q$), and we are assuming $(K,q)=1$. The long exact sequence then implies that $H^0(\widehat{A})\to H^0(\widehat{A}/T)$, which proves our claim.  

Now with this claim in hand, and given as well our assumption that all the generically fixed elements of $\widehat{A}$ are already contained in $T$, we conclude that the predual $B\subset A$ of $\widehat{A}/T$ has no generically fixed non-trivial characters.

We can conclude analogously to the previous proof. Indeed, $x,y$ act on the $\mathbb F_q$-vector space $B/qB$ with eigenvalues $w^i$ for various $1\leq i\leq K-1$, where $w$ is a primitive $K$-th root of unity in some algebraic closure of the field $\mathbb{F}_q$ with $q$ elements.

As before, some $x^iy^j$ with $1\leq i,j\leq K-1$ fixes some nonzero element of the dual vector space $\widehat{B/qB}\subset\widehat{A}/T$. But this contradicts the non-existence of generically fixed elements in the latter group, and we are done.
\end{proof}

We can now formulate our main result, as follows:

\begin{theorem}
Let $K$ be a prime number, and $\Gamma$ a metabelian $(K,2)$-uniform group whose derived subgroup has trivial $K$-primary component. Then, the canonical representation $\pi:\Gamma\to M_K(C(X_G))$ is inner faithful.
\end{theorem}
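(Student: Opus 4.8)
The goal is to remove the torsion-freeness hypothesis from Proposition 5.4 and the ``single primary component'' hypothesis from Proposition 5.5, replacing both by the single assumption that the $K$-primary component of the derived subgroup $A$ vanishes. The natural strategy is to reduce the general case to the two special cases already handled, by splitting $A$ into pieces whose inner faithfulness we can control separately and then arguing that inner faithfulness is inherited from the pieces. Concretely, I would decompose the abelian group $A$ as
$$A = A_{\mathrm{free}} \oplus \bigoplus_{q \neq K} A_q,$$
where $A_{\mathrm{free}}$ is a free abelian (torsion-free) quotient/summand and the $A_q$ are the $q$-primary torsion components for primes $q \neq K$ (the $K$-primary component being trivial by hypothesis). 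The point of the hypothesis is precisely that every prime $q$ occurring in the torsion satisfies $(K,q)=1$, which is exactly the coprimality condition needed to run the cohomological vanishing argument in the proof of Proposition 5.5.

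First I would reduce inner faithfulness to a statement purely about $A$, exactly as in the opening paragraph of Section 5: since any $g \notin A$ is detected by a character of $\Gamma \to \mathbb{Z}_K^2$, it suffices to show that for each nontrivial $a \in A$ there is some point $p \in X_G$ with $\pi_p(a) \neq 1$. By Propositions 5.2 and 5.3, this in turn follows once the generically fixed characters of $A$ generate $\widehat{A}$; so the whole theorem becomes the statement $T = \widehat{A}$, where $T \subset \widehat{A}$ is the subgroup generated by generically fixed characters. The essential observation is that the Pontryagin dual splits compatibly,
$$\widehat{A} \cong \widehat{A_{\mathrm{free}}} \times \prod_{q \neq K} \widehat{A_q},$$
and, crucially, the $\Gamma_{ab}$-action respects this decomposition because each summand is a characteristic (indeed isotypic-by-prime) piece. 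Thus ``generically fixed'' can be tested summand by summand, and it suffices to show that $T$ surjects onto each factor.

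For the torsion-free factor I would invoke the argument of Proposition 5.4 verbatim, working over $\mathbb{C}$ with the eigenvalue analysis supplied by Proposition 4.4: any nontrivial $\Gamma_{ab}$-invariant subspace of the complexified Lie algebra contains a vector fixed by some $x^iy^j$ with $1 \le i,j \le K-1$, forcing the generically fixed characters to generate. For each torsion factor $A_q$ with $q \neq K$, I would run the proof of Proposition 5.5 over $\mathbb{F}_q$, where the coprimality $(K,q)=1$ makes the higher cohomology of the relevant cyclic groups $\langle x^iy^j \rangle$ vanish (being simultaneously annihilated by $K$ and by a power of $q$), so that generic fixedness lifts through the short exact sequence $1 \to T \to \widehat{A} \to \widehat{A}/T \to 1$ and the same eigenvalue contradiction applies. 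Combining the factors yields $T = \widehat{A}$.

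\textbf{The main obstacle.} The delicate point is justifying that the three arguments can genuinely be decoupled: one must check that the decomposition of $A$ into its free and $q$-primary parts is preserved by the conjugation action of $\Gamma_{ab}$, so that genericity is detected independently on each factor, and that the subgroup $T$ of generically fixed characters decomposes compatibly with $\widehat{A} \cong \prod \widehat{A_q} \times \widehat{A_{\mathrm{free}}}$. Since the torsion subgroup and its primary components are characteristic in any abelian group, they are automatically invariant under the (automorphic) $\Gamma_{ab}$-action, so this decoupling does hold; the remaining work is bookkeeping to confirm that ``$T$ generates'' can be verified one primary piece at a time, after which Propositions 5.4 and 5.5 apply unchanged.
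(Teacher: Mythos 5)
Your plan is the right one, and it is in fact the same as the paper's (whose entire proof reads ``this follows by combining the previous two results''); but two of your bookkeeping claims are not correct as stated, and they conceal the one genuinely delicate point of the combination. First, while the torsion subgroup $A_{tors}$ and its primary components $A_q$ are characteristic and hence $\Gamma_{ab}$-invariant, a free complement $A_{\mathrm{free}}$ is \emph{not} canonical and need not be invariant; what is canonical and equivariant is the quotient $A/A_{tors}$, or dually the connected component $\widehat{A}_0=\widehat{A/A_{tors}}\subset\widehat{A}$. Second, even granting an equivariant product decomposition $\widehat{A}\cong\widehat{A}_0\times\prod_q\widehat{A_q}$, knowing that $T$ surjects onto each factor does not give $T=\widehat{A}$ (compare the diagonal in $\mathbb Z_2\times\mathbb Z_2$). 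The correct scheme is: (i) the Lie-algebra argument of Proposition 5.4 shows $T\supseteq\widehat{A}_0$, since any generically fixed one-parameter subgroup obtained by exponentiating a fixed vector of an invariant complement of $\mathrm{Lie}(T)$ would already lie in $T$; (ii) one then shows that $T$ maps onto the finite quotient $\widehat{A}/\widehat{A}_0\cong\widehat{A_{tors}}$, which has order prime to $K$ by hypothesis; (iii) these two facts together give $T=\widehat{A}$.

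The delicate point is step (ii). As in Proposition 5.5 one must lift a generically fixed element of $\widehat{A}/T$ to a generically fixed element of $\widehat{A}$, and the obstruction is the connecting map $H^0(\langle x^iy^j\rangle,\widehat{A}/T)\to H^1(\langle x^iy^j\rangle,T)$. But now $T$ contains the torus $\widehat{A}_0$, so the coprimality argument of Proposition 5.5 no longer forces $H^1(T)=0$: indeed $H^1(\mathbb Z_K,\widehat{A}_0)\cong\mathrm{Hom}\bigl(\mathrm{coker}(g-1\ \text{on}\ A/A_{tors}),\mathbb T\bigr)$ is typically a nontrivial $K$-group. The lifting nevertheless works, for a different reason: every $H^1(\mathbb Z_K,-)$ is annihilated by $K$, while $H^0(\widehat{A}/T)$ is contained in $\widehat{A}/T$, a quotient of $\widehat{A_{tors}}$ and hence of order prime to $K$; a homomorphism from a group of order prime to $K$ into a group annihilated by $K$ vanishes, so the connecting map is zero. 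With this observation inserted, your argument closes; without it, the assertion that ``Propositions 5.4 and 5.5 apply unchanged'' is not justified, because neither proposition as proved handles the extension of the finite part of $\widehat{A}$ by the torus.
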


\begin{proof}
This follows indeed by combining the previous two results.
\end{proof}

Summarizing, we have now a whole number of new results regarding the discrete group case, notably complementing those from \cite{bfr}. The unification of the present work with the one in \cite{bfr} remains a key open question, that we would like to raise here.

\section{Twisted orthogonal groups}

In this section we go back to the general framework of the discrete quantum groups, from Sections 1-2 above. There are many known inner faithfulness and stationarity results available here, notably from \cite{ba2}, \cite{bch}, \cite{bfr}, \cite{bne}, and our purpose is to bring some new contributions to the subject, by investigating some basic cocycle twists.

We recall that the standard twist $O_n^{-1}$ of the orthogonal group $O_n$ is the compact quantum group defined as the dual object to the following Hopf $C^*$-algebra:

\begin{definition}
The algebra $C(O_n^{-1})$ is the universal $C^*$-algebra generated by the entries of a $n\times n$ unitary matrix of self-adjoint elements $u_{ij}$ satisfying
\begin{enumerate}
\item distinct $u_{ij}$ anticommute on each row and column,
\item all the other pairs of $u_{ij}$ entries commute,    
\end{enumerate}
with coalgebra structure given by $\Delta(u_{ij})=\sum_ku_{ik}\otimes u_{kj}$, $\varepsilon(u_{ij})=\delta_{ij}$, $S(u_{ij})=u_{ji}$.
\end{definition}

We refer to \cite{bbc} for full details regarding this construction. 

In order to investigate stationarity questions for $O_n^{-1}$, our idea will be that of using techniques from \cite{bch}, based on the study of various central subalgebras of $C(O_n^{-1})$. 

Let us begin our study with the following standard result:

\begin{proposition}
The algebra of central monomials $A\subset C(O_n^{-1})$, i.e. the span of the products of coordinates $u_{ij}$ which are central, is given by the following formula, 
$$A=\overline{span}\left\{\prod_{ij}u_{ij}^{e_{ij}}\Big|{\rm the\ exponent\ matrix\ }e{\rm\ is\ bistochastic\ mod\ } 2\right\}$$
with the convention that the products can be arbitrarily expanded into usual monomials.
\end{proposition}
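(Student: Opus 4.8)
The plan is to characterize centrality of a monomial directly from the commutation and anticommutation relations defining $C(O_n^{-1})$, and then to show that the centrality condition is exactly captured by the ``bistochastic mod $2$'' constraint on the exponent matrix. First I would reduce to the case of a single monomial $P=\prod_{ij}u_{ij}^{e_{ij}}$ with $e_{ij}\in\{0,1\}$, since $u_{ij}^2$ is central (the coordinates being self-adjoint, $u_{ij}^2$ commutes with everything because the only anticommutation is between \emph{distinct} entries sharing a row or column). Thus only the parity of each exponent matters for centrality, and it suffices to understand when a squarefree product $P$ is central. The claim is then that $P$ is central precisely when each row sum and each column sum of the reduced exponent matrix $\overline{e}=e\bmod 2$ is even, i.e.\ $e$ is bistochastic mod $2$ in the sense of the statement.

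The key computation is to test commutativity of $P$ against a single generator $u_{kl}$. Using the defining relations, $u_{kl}$ anticommutes with exactly those $u_{ij}$ appearing in $P$ that are distinct from $u_{kl}$ and lie in row $k$ or column $l$; it commutes with all the rest. Therefore, moving $u_{kl}$ past $P$ produces a sign $(-1)^{N(k,l)}$, where $N(k,l)$ counts the factors of $P$ that anticommute with $u_{kl}$. So $P$ commutes with \emph{every} generator $u_{kl}$ if and only if $N(k,l)$ is even for all $k,l$. The heart of the argument is to translate the parity of $N(k,l)$ into a condition on the row and column sums of $\overline{e}$: the number of entries of $P$ in row $k$ distinct from position $(k,l)$, plus the number in column $l$ distinct from $(k,l)$, has parity equal to (row-$k$ sum) $+$ (column-$l$ sum) $-2\overline{e}_{kl}$, hence the same parity as (row-$k$ sum) $+$ (column-$l$ sum) modulo $2$. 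Requiring this to vanish for all $k,l$ forces all row sums to share a common parity and all column sums to share it as well; testing against generators and using the matrix shape then pins this common parity to be even, giving exactly the bistochastic-mod-$2$ condition.

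From the forward direction (bistochastic mod $2$ $\Rightarrow$ central) I would conclude that every such $P$ lies in the center, and hence that the closed span $A$ in the statement consists of central elements. For the converse, any central monomial must satisfy $N(k,l)$ even for all $k,l$, which by the parity count forces its exponent matrix to be bistochastic mod $2$; a general central element is then shown to be a closed linear combination of such monomials by expanding into the standard monomial basis and projecting onto center-compatible terms. The main obstacle I anticipate is \emph{bookkeeping the signs carefully}: the relations only impose anticommutation between \emph{distinct} entries in a common row or column, so one must be scrupulous about the diagonal correction $\overline{e}_{kl}$ and about whether $u_{kl}$ itself already appears in $P$. Once the sign count $N(k,l)\equiv(\text{row sum})_k+(\text{col sum})_l\pmod 2$ is established cleanly, the equivalence with bistochasticity mod $2$ is immediate, and the ``arbitrarily expanded into usual monomials'' convention simply records that the ordering of the factors is irrelevant up to the signs already accounted for.
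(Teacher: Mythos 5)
Your overall strategy coincides with the paper's: commute a single generator $u_{kl}$ past the monomial, count the anticommuting factors, and observe that this count is congruent mod $2$ to $R_k+C_l$ (row sum plus column sum of the exponent matrix), so that centrality is equivalent to $R_k+C_l\equiv 0\pmod 2$ for all $k,l$. Up to that point your computation, including the $-2\overline{e}_{kl}$ correction for the diagonal term, matches the paper's proof essentially verbatim.

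However, your final step contains a genuine error: the condition $R_k+C_l\equiv 0\pmod 2$ for all $k,l$ does \emph{not} pin the common parity of the row and column sums to be even. It only forces all row sums and all column sums to be congruent to one common value $\varepsilon\in\{0,1\}$, and both values occur. For instance, the exponent matrix of $u_{11}u_{22}\cdots u_{nn}$ is the identity matrix, with all row and column sums equal to $1$, and this monomial is central: each generator $u_{kl}$ with $k\neq l$ picks up exactly two sign changes when moved past it, one from $u_{kk}$ and one from $u_{ll}$. That common sums of either parity are allowed is precisely what ``bistochastic mod $2$'' means here, and the distinction is not cosmetic: the paper immediately afterwards decomposes $A=A_0\oplus A_1$ according to whether the common sum is $0$ or $1$, and your version would discard $A_1$ entirely. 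A secondary, minor point: $A$ is \emph{defined} as the closed span of the central monomials, so your closing step about expanding a general central element and ``projecting onto center-compatible terms'' is not required by the statement (and, as phrased, is not justified, since the monomials are not linearly independent).
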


\begin{proof}
Consider an arbitrary monomial $z=u_{i_1j_1}\ldots u_{i_pj_p}$. According to the commutation relations inside $C(O_n^{-1})$, we have $zu_{ij}=(-1)^Eu_{ij}z$, with the exponent being:
$$E=\#\left\{s\Big|i_s=i,j_s\neq j\right\}+\#\left\{s\Big|i_s\neq i,j_s=j\right\}$$

Now if we formally compact our monomial as $z'=\prod_{ij}u_{ij}^{e_{ij}}$, with the exponents $e_{ij}\in\mathbb N$ being obtained by gathering the indices $(i_s,j_s)$, the above exponent becomes:
$$E=\sum_{k\neq j}e_{ik}+\sum_{k\neq i}e_{kj}$$

Since we are only interested in the value of $E$ mod 2, we can add if we want the missing terms, corresponding to the quantity $2e_{ij}=0$ mod 2, and we conclude that we have the following formula, with $R_i,C_j$ being the row and column sums of $e\in M_n(\mathbb N)$:
$$E=R_i+C_j\ {\rm mod}\ 2$$

We conclude that $z$ is central precisely when $R_i+C_j=0$ mod 2 for any $i,j$, and so when the exponent matrix $e\in M_n(\mathbb N)$ is bistochastic mod 2, as stated.
\end{proof}

In order to have more insight into the structure of $A$, consider the diagonal subgroup $L_n=\mathbb Z_2^n$ of $O_n^{-1}$. The left and right multiplication by $L_n$ induce coactions as follows:
$$\lambda:C(O_n^{-1})\to C(L_n)\otimes C(O_n^{-1})\quad,\quad \rho: C(O_n^{-1})\to C(O_n^{-1})\otimes C(L_n)$$

Our algebra $A$ then splits naturally as $A_0\oplus A_1$, where: 
$$A_\varepsilon=\overline{span}\left\{\prod_{ij}u_{ij}^{e_{ij}}\Big|e={\rm bistochastic\ mod\ } 2,\ {\rm with\ sums}\ \varepsilon\right\}$$

The point now is that these two components $A_\varepsilon$ can be recovered as follows, with the elements $g_\varepsilon\in L_n$ being given by $g_0=1$ and $g_1=(1,\ldots,1)$:
$$A_\varepsilon=\left\{x\in C(O_n^{-1})\Big|\lambda(x)=g_\varepsilon\otimes x,\ \rho(x)=x\otimes g_\varepsilon\right\}$$

Now recall from \cite{bbc} that $C(O_n^{-1})$ is the twist of $C(O_n)$ by a cocycle $\sigma:C(O_n)^{\otimes 2}\to \mathbb{C}$ which factors through the following surjection:
$$C(O_n)^{\otimes 2} \to C(L_n)^{\otimes 2}$$

In other words, $C(O_n^{-1})$ can be identified with $C(O_n)$ as a vector space, and in fact as a coalgebra, with new multiplication given, in Sweedler notation, by:
$$x\cdot y=\sigma^{-1}(x_1,y_1)~x_2y_2~\sigma(x_3,y_3)$$

Our remark here is that the subalgebra $A\subset C(O_n)$ ``survives'' the deformation, i.e. its cocycle-twisted counterpart in $C(O_n^{-1})$ retains the old multiplication: 

\begin{proposition}
The canonical vector space identification of $A\subset C(O_n)$ with its cocycle-twisted counterpart is an isomorphism of algebras. 
\end{proposition}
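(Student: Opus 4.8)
The plan is to show that the cocycle twist leaves the multiplication on $A$ unchanged by proving that the relevant cocycle values are trivial on the Sweedler components that arise from elements of $A$. Recall the twisted multiplication
$$x\cdot y=\sigma^{-1}(x_1,y_1)~x_2y_2~\sigma(x_3,y_3),$$
and recall that $\sigma$ factors through the surjection $C(O_n)^{\otimes 2}\to C(L_n)^{\otimes 2}$, where $L_n=\mathbb Z_2^n$ is the diagonal subgroup. The first and decisive step is to understand how the comultiplication of $C(O_n)$ interacts with the decomposition $A=A_0\oplus A_1$ and with the projection onto $C(L_n)$. Using the characterization of each $A_\varepsilon$ as the joint eigenspace
$$A_\varepsilon=\left\{x\in C(O_n^{-1})\Big|\lambda(x)=g_\varepsilon\otimes x,\ \rho(x)=x\otimes g_\varepsilon\right\},$$
I would argue that for $x\in A_\varepsilon$, the image of $x$ under the projection $C(O_n)\to C(L_n)$ dual to the inclusion $L_n\subset O_n$ is precisely the grouplike element $g_\varepsilon\in C(L_n)$ (up to the scalar $x$ evaluated on the diagonal, which is what the eigenvalue condition encodes).

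The key step is therefore the following computation of Sweedler legs: for $x\in A_\varepsilon$, when one feeds $x_1$ (or $x_3$) into $\sigma$, only the image of $x$ in $C(L_n)$ matters, and that image is $g_\varepsilon$. Concretely, if $p:C(O_n)\to C(L_n)$ denotes the restriction homomorphism, then $\sigma(x_1,y_1)=\sigma\big(p(x_1),p(y_1)\big)$, and the left-invariance property $\lambda(x)=g_\varepsilon\otimes x$ forces $p(x_1)\otimes x_2=g_\varepsilon\otimes x$ inside $C(L_n)\otimes C(O_n)$. Consequently $\sigma(x_1,y_1)$ collapses to $\sigma(g_\varepsilon,g_\delta)$ for $x\in A_\varepsilon$, $y\in A_\delta$, a constant scalar that no longer depends on the splitting of $x,y$ into Sweedler pieces. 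The same holds for the right leg via $\rho$ and $\sigma(x_3,y_3)=\sigma(g_\varepsilon,g_\delta)$. The twisted product then becomes
$$x\cdot y=\sigma^{-1}(g_\varepsilon,g_\delta)\,\sigma(g_\varepsilon,g_\delta)\,x_2y_2=x_2y_2,$$
since $\sigma^{-1}(g_\varepsilon,g_\delta)\sigma(g_\varepsilon,g_\delta)=1$, and $x_2y_2$ summed over the Sweedler decomposition is exactly the untwisted product $xy$. This shows the identification is multiplicative on each graded piece $A_\varepsilon\cdot A_\delta$, and since $A=A_0\oplus A_1$ this covers all products.

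The main obstacle I anticipate is justifying rigorously that the $\sigma$-legs only see $p(x_1)$ and $p(x_3)$, and that these equal the grouplike $g_\varepsilon$. This requires care in identifying $C(O_n^{-1})$ with $C(O_n)$ as a coalgebra so that the Sweedler notation refers unambiguously to the comultiplication of $C(O_n)$, which the twist preserves; then the invariance equations for $\lambda,\rho$ must be transported to statements about $(p\otimes\mathrm{id})\Delta$ and $(\mathrm{id}\otimes p)\Delta$. Once the coactions $\lambda,\rho$ are recognized as $(p\otimes\mathrm{id})\Delta$ and $(\mathrm{id}\otimes p)\Delta$ respectively — which is exactly what ``left and right multiplication by $L_n$'' means — the eigenvalue conditions immediately pin down the outer Sweedler legs, and the cancellation $\sigma^{-1}\sigma=1$ finishes the argument. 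The verification is then essentially bookkeeping, and the substantive content is entirely the observation that membership in $A$ freezes the cocycle arguments to fixed grouplikes.
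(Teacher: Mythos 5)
Your proof is correct and follows essentially the same route as the paper: both use the characterization of $A_\varepsilon$ via the coactions $\lambda,\rho$ to freeze the outer Sweedler legs to the grouplikes $g_\varepsilon$, so that the two cocycle factors become mutually inverse scalars. If anything, your observation that $\sigma^{-1}(g_\varepsilon,g_\delta)\,\sigma(g_\varepsilon,g_\delta)=1$ handles all cases uniformly is slightly cleaner than the paper's split into the cases $x\in A_0$ (using normalization of $\sigma$) and $x,y\in A_1$ (using cancellation).
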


\begin{proof}
This follows from the characterization of $A$ by means of the left and right coactions $\lambda,\rho$ discussed above. Indeed, from $\sigma(1,-) = \sigma(-,1)=1$ we obtain that the twisted multiplication of $x\in A_0$ and $y\in A_\varepsilon$ is given by:
$$x\cdot y=\sigma^{-1}(1,g_\varepsilon)\,xy\,\sigma(1,g_\varepsilon) = xy$$
  
An analogous argument settles the case $y\in A_0$. Finally, if both $x$ and $y$ are elements of $A_1$, then, since the two factors involving $\sigma$ cancel out, we obtain:
$$x\cdot y=\sigma^{-1}(g_\varepsilon,g_\varepsilon)\,xy\,\sigma(g_\varepsilon,g_\varepsilon)=xy$$

Thus, the twisted and untwisted multiplicative structures coincide on $A$, as claimed.
\end{proof}

By performing now the spectrum computation inside $C(O_n)$, we obtain:

\begin{theorem}
The spectrum of the following subalgebra of $A$,
$$A_0=\overline{span}\left\{\prod_{ij}u_{ij}^{e_{ij}}\Big|e={\rm bistochastic\ mod\ } 2,\ {\rm with\ sums}\ 0\right\}$$
is the space $X_0=\mathbb Z_2^n\backslash O_n/\mathbb Z_2^n$. A similar result holds for $X=Spec(A)$.
\end{theorem}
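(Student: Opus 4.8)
The plan is to transport the whole computation to the \emph{classical} orthogonal group, where everything becomes a statement about invariant functions. By Proposition 6.3 the subalgebra $A\subset C(O_n^{-1})$, and hence each graded piece $A_\varepsilon$, is isomorphic as an algebra to the corresponding subalgebra of the commutative algebra $C(O_n)$; so I will compute $\mathrm{Spec}(A_0)$ inside $C(O_n)$, where $O_n$ is a genuine compact group carrying commuting left and right actions of $L_n=\mathbb Z_2^n$. Writing a diagonal element of $L_n$ as $g=\mathrm{diag}(\varepsilon_1,\ldots,\varepsilon_n)$, the left action sends the coordinate $u_{ij}\mapsto \varepsilon_i u_{ij}$, and the right action by $h=\mathrm{diag}(\eta_1,\ldots,\eta_n)$ sends $u_{ij}\mapsto \eta_j u_{ij}$, matching the coactions $\lambda,\rho$ used above.

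Next I would track the effect of these actions on a monomial $\prod_{ij}u_{ij}^{e_{ij}}$: it is rescaled by $\prod_i\varepsilon_i^{R_i}$ on the left and by $\prod_j\eta_j^{C_j}$ on the right, where $R_i,C_j$ are the row and column sums of $e$ appearing in Proposition 6.2. Hence such a monomial is fixed by the full $L_n\times L_n$-action exactly when every $R_i$ and every $C_j$ is even, which is precisely the defining condition of $A_0$. To upgrade this equality of spanning sets into the fixed-point identity $A_0=C(O_n)^{L_n\times L_n}$, I would invoke the standard averaging argument: the monomials span a dense subalgebra of $C(O_n)$ by Stone--Weierstrass, and averaging a polynomial over the finite group $L_n\times L_n$ annihilates every non-invariant monomial while fixing the invariant ones, so the invariant monomials are dense in the closed invariant subalgebra.

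It then remains to apply the elementary fact that, for a finite group $G$ acting on a compact Hausdorff space $Y$, one has $C(Y)^G\cong C(Y/G)$; taking $Y=O_n$ and $G=L_n\times L_n$ gives $\mathrm{Spec}(A_0)=O_n/(L_n\times L_n)=\mathbb Z_2^n\backslash O_n/\mathbb Z_2^n$, as claimed. The very same scheme handles $X=\mathrm{Spec}(A)$. Writing $s\colon L_n\to\{\pm1\}$ for the product-of-signs character $s(\mathrm{diag}(\varepsilon_i))=\prod_i\varepsilon_i$, the monomial computation above shows that a monomial lies in $A=A_0\oplus A_1$ if and only if its $L_n\times L_n$-character is either trivial or the character $(g,h)\mapsto s(g)s(h)$, that is, if and only if it is trivial on the index-two subgroup $H=\{(g,h):s(g)=s(h)\}$. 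Thus $A=C(O_n)^H$ and $X=O_n/H$, with the quotient by the residual $\mathbb Z_2=(L_n\times L_n)/H$ recovering $X_0$, so that $X_0=X/\big((L_n\times L_n)/H\big)$.

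The monomial bookkeeping and the averaging step are routine. The one point deserving genuine care is the passage from the \emph{spanning} description of $A_\varepsilon$ to an honest fixed-point description, since it is exactly this that licenses the clean quotient formula $C(Y)^G=C(Y/G)$; without density of the invariant monomials one would only get a dense subalgebra and could not conclude that the spectrum is the full double-coset space. For the $A$ case the only extra subtlety is pinning down which characters of $L_n\times L_n$ actually occur, i.e. checking that the bistochastic-mod-$2$ condition of Proposition 6.2 is equivalent to triviality on $H$; everything else is formal.
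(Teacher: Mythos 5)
Your proof is correct and follows essentially the same route as the paper's: both transfer the computation to $C(O_n)$ via Proposition 6.3 and identify the row- and column-parity conditions on the exponent matrix with invariance under the left and right sign-flip actions of $\mathbb Z_2^n$, reading off the spectrum as the double coset space. You merely make explicit two steps the paper leaves implicit, namely the averaging argument that upgrades the spanning description of $A_\varepsilon$ to a genuine fixed-point algebra, and the identification of $X=\mathrm{Spec}(A)$ as $O_n/H$ for the index-two subgroup $H=\{(g,h):s(g)=s(h)\}\subset L_n\times L_n$.
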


\begin{proof}
As a first remark, in the context of Proposition 6.2, the possible row and column sums of $e$ being $0,1$, we have a decomposition $A=A_0\oplus A_1$, which is a $\mathbb Z_2$-grading. 

Regarding now the computation of the spectrum, recall first that we have:
$$C(PO_n)=\overline{span}\left\{\prod_{ij}u_{ij}^{e_{ij}}\Big|e={\rm with\ total\ sum\ zero\ mod\ } 2\right\}$$

The spectra $X_0,X$ of $A_0,A$ appear in a similar way, with $O_n$ being divided by a number of copies of $\mathbb Z_2$. To be more precise, in what regards $X_0$, the $n+n$ conditions which are imposed on $e$ correspond to the $n+n$ actions of $\mathbb Z_2$ by switching the signs on the $n+n$ rows and columns. As for $X$, the situation here is similar.
\end{proof}

\section{Stationarity results}

The goal of the present section is to construct a stationary model for $C(O_2^{-1})$, using the material of \cite{bch}. To be more precise, our result regarding $O_2^{-1}$ is as follows:

\begin{theorem}
We have matrix model of type
$$C(O_2^{-1})\to M_4(C(\mathbb T))$$
which is stationary with respect to the uniform measure on $\mathbb T$.
\end{theorem}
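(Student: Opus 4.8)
The plan is to construct the model explicitly and then verify stationarity directly, exploiting the very small size of $O_2^{-1}$. First I would write down a presentation of $C(O_2^{-1})$: by Definition 6.1, the algebra is generated by four self-adjoint elements $u_{11},u_{12},u_{21},u_{22}$ forming a unitary $2\times2$ matrix, where distinct entries in the same row or column anticommute and all other pairs commute. In the $n=2$ case the ``other pairs'' are precisely the two diagonal pairs $(u_{11},u_{22})$ and the two antidiagonal pairs $(u_{12},u_{21})$, so these commute, while the four row/column pairs anticommute. I would then propose a concrete $4\times4$ matrix representation, sending each $u_{ij}$ to a self-adjoint matrix in $M_4(C(\mathbb T))$ whose entries are functions of the circle variable $t\in\mathbb T$, chosen so that the commutation/anticommutation relations and the unitarity of $u$ are satisfied. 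The anticommutation relations strongly suggest using tensor products of Pauli matrices (a Clifford-algebra-type model on $\mathbb C^2\otimes\mathbb C^2$), twisted by the parameter $t$ to recover the full spectrum; this is exactly the kind of construction carried out in \cite{bch}.

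Second, I would check that this assignment is a well-defined $*$-homomorphism $\pi:C(O_2^{-1})\to M_4(C(\mathbb T))$, i.e. that the proposed matrices are self-adjoint, that $\pi(u)$ is a unitary, and that all the twisted (anti)commutation relations of Definition 6.1 hold fiberwise for every $t$. This is a finite, explicit verification once the matrices are written down.

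Third, and this is where the substance lies, I would verify the stationarity condition $\int_{O_2^{-1}}=(\mathrm{tr}\otimes\int_\mathbb T)\pi$. The natural strategy, following \cite{bch}, is to use the criterion of Proposition 2.1: it suffices to show that the functional $\varphi=\mathrm{tr}\otimes\int_\mathbb T$ composed with $\pi$ is already idempotent under convolution, equivalently that $\pi$ is inner faithful and the resulting Ces\`aro limit reproduces the Haar state. Concretely, I would reduce the problem to the central subalgebra $A=A_0\oplus A_1\subset C(O_2^{-1})$ studied in Section 6 and use Theorem 6.4, which identifies $\mathrm{Spec}(A_0)=\mathbb Z_2^2\backslash O_2/\mathbb Z_2^2$, to compare moments. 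Because $O_2$ is abelian-by-$\mathbb Z_2$ and the relevant double coset space is one-dimensional, the Haar integral of any central monomial $\prod u_{ij}^{e_{ij}}$ can be computed explicitly, and the task becomes matching these against the circle averages $\int_\mathbb T\mathrm{tr}(\pi(\prod u_{ij}^{e_{ij}}))\,dt$. I would then extend from $A$ to all of $C(O_2^{-1})$ using the $\mathbb Z_2\times\mathbb Z_2$ grading coming from the coactions $\lambda,\rho$: any monomial whose exponent matrix is not bistochastic mod $2$ integrates to zero on both sides by the left/right $L_2$-equivariance, so only central monomials contribute.

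The main obstacle I expect is the explicit matching of the Haar moments on $O_2^{-1}$ with the circle-averaged traces of the model. Writing down the candidate matrices and checking the relations is routine, but pinning down the one-parameter family over $\mathbb T$ so that its spectral distribution reproduces the Weingarten-type integrals of $C(O_2^{-1})$ — equivalently, so that the Gram-matrix law of Theorem 2.5 coincides with the true character law — requires choosing the $t$-dependence correctly and then performing a nontrivial, if finite, computation. A clean way to organize this is to transport everything to $C(O_2)$ via the cocycle identification of Proposition 6.3 on the central part, reducing the twisted computation to an ordinary integral over the classical group $O_2$ (where Haar integration is elementary), and then to verify that the twist does not alter the relevant moments. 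The delicate point is controlling the non-central part of the algebra, where the cocycle genuinely changes the multiplication; here I would lean on the grading to argue that non-central monomials decouple and contribute nothing to either side, so that stationarity on $A$ propagates to the whole algebra.
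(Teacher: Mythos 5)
Your outline has the right ingredients in view (the subalgebra of Section~6, the cocycle untwisting of Proposition~6.3, the $L_2$-equivariance argument for killing non-central monomials), but as written it contains a genuine gap at its load-bearing step: the model itself is never constructed. ``Tensor products of Pauli matrices twisted by the parameter $t$'' is a heuristic, not a homomorphism -- you would still need to exhibit the four self-adjoint $4\times4$ matrix functions, verify unitarity and the mixed commutation/anticommutation relations fiberwise, check that the resulting $\pi$ is $L_2\times L_2$-equivariant (without which your claim that non-bistochastic monomials vanish under $\mathrm{tr}\otimes\int_{\mathbb T}$ has no justification), and then carry out the moment-matching against the Haar state. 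Each of these is finite but none is done, and the last one is exactly the ``nontrivial computation'' you defer. A second, smaller misstep: for producing a $4\times4$ model the relevant subalgebra is not the two-sided central algebra $A=A_0\oplus A_1$ of Section~6 (with spectrum $\mathbb Z_2^2\backslash O_2/\mathbb Z_2^2$), but the one-sided coinvariant algebra $C(G/L_2)$, over which $C(G)$ is a module of rank $|L_2|=4$; the two-sided reduction is fine for computing Haar moments but does not by itself explain where the $M_4$ comes from.

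The paper's proof makes both the construction and the stationarity automatic, and you should compare your plan against it. One takes the quantum subgroup $L_2=\mathbb Z_2^2\subset O_2^{-1}$ (which survives the twist since the cocycle factors through $C(L_2)^{\otimes2}$), forms the coinvariant subalgebra $A=C(G/L_2)\subset C(G)$, and observes via Theorem~6.4 that $A$ is commutative with spectrum $X=O_2/L_2\cong\mathbb T$. The matrix model is then the left regular representation $\pi:C(G)\to\mathrm{End}_A C(G)$, a bundle of $4\times4$ matrix algebras over $X$ which trivializes to $M_4(C(\mathbb T))$ because $X$ is a circle; stationarity with respect to the Haar measure on $X$ is inherited wholesale from the argument of \cite[Theorem~2.5]{bch}, which applies verbatim to left comodule subalgebras of this type. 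In other words, the explicit matrices and the Weingarten-type moment comparison that constitute the hard part of your plan are exactly what the regular-representation construction lets you avoid. Your route could in principle be completed -- and would yield a more concrete model -- but to count as a proof it must either produce the matrices and finish the moment computation, or else invoke the module-theoretic mechanism above, at which point it collapses into the paper's argument.
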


\begin{proof}
We use the well-known cocycle twisting picture of $G=O_2^{-1}$. Let $L_2=\mathbb{Z}_2^2$ be the diagonal subgroup of $O_2$. Since $C(G)$ is obtained from $C(O_2)$ by twisting by a cocycle $\sigma:C(O_2^2)^{\otimes 2}\to \mathbb{C}$ that factors through $C(G)^{\otimes 2}\to C(L_2)^{\otimes 2}\to \mathbb{C}$, the group $L_2$ survives the deformation and appears as a quantum subgroup of $G$ as well. In other words, we obtain a surjection of Hopf $*$-algebras, as follows:
$$p:C(G)\to C(L_2)$$

Consider now the algebra $A$ consisting of the elements $x\in C(G)$ satisfying:
$$x_1\otimes p(x_2) = x\otimes 1\in C(G)\otimes C(L_2)$$

Attached to the surjection $p$ we then have a homogeneous space, as follows:
$$\iota:A=C(G/L_2)\to C(G)$$

More concretely now, denoting the standard matrix generators of $C(G)$ by $u_{ij}$ with $1\le i,j\le 2$, our algebra $A$ is the subalgebra generated by the following monomials:
$$\prod_{i,j}u_{ij}^{e_{ij}},\ e_{1j}+e_{2j}\text{ is even for }j=1,2$$

We know from Theorem 6.4 that this subalgebra $A\subset C(G)$ is commutative, and equals the function algebra of the space $X=O_2/L_2$. Despite the fact that $A\subset C(G)$ is not a Hopf subalgebra but rather only a left comodule subalgebra, the proof of \cite[Theorem 2.5]{bch} goes through virtually verbatim to prove that the regular action of $C(G)$ on itself by left multiplication embeds it into the bundle $End_{A}C(G)$ of $4\times 4$ matrix algebras over the homogeneous space $X=\mathrm{Spec}(A)$. Thus, we have an embedding as follows:
$$\pi:C(G)\to End_{A}C(G)$$

Moreover, once again by reasoning as in \cite{bch}, we deduce that this embedding is stationary with respect to the Haar measure on $X$ and to the normalized trace:
$$tr:End_{C(L)}C(G)\to C(L)$$

Note furthermore that $X$ is isomorphic to the circle group obtained by quotienting out the maximal torus of $O_2$ by its order-two subgroup. This implies that the four-dimensional bundle over $X$ associated to the $A$-module $C(G)$ is trivial, and hence we have:
$$End_A(C(G))\simeq M_4(A)$$

All in all, we obtain a stationary matrix model, as in the statement.
\end{proof}

As explained in \cite{bbc} we have an embedding $O_n^{-1}\subset S_{2^n}^+$ for any $n\in\mathbb N$, obtained by viewing $O_n^{-1}$ as the quantum symmetry group of the $n$-hypercube. In particular, we can talk about the universal flat model for $O_n^{-1}$, and deciding whether this model is stationary or not is therefore an interesting question, which makes sense at any $n\in\mathbb N$.

In connection with the above considerations, let us note that we have an embedding of $L_n=\mathbb{Z}_n^2$ into $O_n^{-1}$, for any $n\in\mathbb N$. However, the resulting quantum homogeneous space $O_n^{-1}/L_n$ is no longer classical at $n\ge 3$, and hence the above method does not apply directly to produce matrix models for higher-dimensional twisted orthogonal groups.


\begin{thebibliography}{99}

\bibitem{ba1}T. Banica, Hopf algebras and subfactors associated to vertex models, {\em J. Funct. Anal.} {\bf 159} (1998), 243--266.

\bibitem{ba2}T. Banica, Quantum groups from stationary matrix models, {\em Colloq. Math.} {\bf 148} (2017), 247--267.

\bibitem{bb1}T. Banica and J. Bichon, Hopf images and inner faithful representations, {\em Glasg. Math. J.} {\bf 52} (2010), 677--703.

\bibitem{bb2}T. Banica and J. Bichon, Random walk questions for linear quantum groups, {\em Int. Math. Res. Not.} {\bf 24} (2015), 13406--13436.

\bibitem{bbc}T. Banica, J. Bichon and B. Collins, The hyperoctahedral quantum group, {\em J. Ramanujan Math. Soc.} {\bf 22} (2007), 345--384.

\bibitem{bch}T. Banica and A. Chirvasitu, Thoma type results for discrete quantum groups, {\em Internat. J. Math.} {\bf 28} (2017), 1--23.

\bibitem{bfs}T. Banica, U. Franz and A. Skalski, Idempotent states and the inner linearity property, {\em Bull. Pol. Acad. Sci. Math.} {\bf 60} (2012), 123--132.

\bibitem{bfr}T. Banica and A. Freslon, Modelling questions for quantum permutations, preprint 2017.

\bibitem{bne}T. Banica and I. Nechita, Flat matrix models for quantum permutation groups, {\em Adv. Appl. Math.} {\bf 83} (2017), 24--46.

\bibitem{bmt}E. B\'edos, G.J. Murphy and L. Tuset, Co-amenability of compact quantum groups, {\em J. Geom. Phys.} {\bf 40} (2001), 129--153.

\bibitem{bic}J. Bichon, Algebraic quantum permutation groups, {\em Asian-Eur. J. Math.} {\bf 1} (2008), 1--13.

\bibitem{bcv}M. Brannan, B. Collins and R. Vergnioux, The Connes embedding property for quantum group von Neumann algebras, {\em Trans. Amer. Math. Soc.} {\bf 369} (2017), 3799--3819. 

\bibitem{cha}A. Chassaniol, Quantum automorphism group of the lexicographic product of finite regular graphs, {\em J. Algebra} {\bf 456} (2016), 23--45. 

\bibitem{chi}A. Chirvasitu, Residually finite quantum group algebras, {\em J. Funct. Anal.} {\bf 268} (2015), 3508--3533.

\bibitem{cdp}L.S. Cirio, A. D'Andrea, C. Pinzari and S. Rossi, Connected components of compact matrix quantum groups and finiteness conditions, {\em J. Funct. Anal.} {\bf 267} (2014), 3154--3204.

\bibitem{csn}B. Collins and P. \'Sniady, Integration with respect to the Haar measure on the unitary, orthogonal and symplectic group, {\em Comm. Math. Phys.} {\bf 264} (2006), 773--795.

\bibitem{dpr}A. D'Andrea, C. Pinzari and S. Rossi, Polynomial growth for compact quantum groups, topological dimension and *-regularity of the Fourier algebra, preprint 2016.

\bibitem{fls}U. Franz, H.H. Lee and A. Skalski, Integration over the quantum diagonal subgroup and associated Fourier-like algebras, {\em Internat. J. Math.} {\bf 27} (2016), 1--37.
 
\bibitem{fsk}U. Franz and A. Skalski, On idempotent states on quantum groups, {\em J. Algebra} {\bf 322} (2009), 1774--1802.

\bibitem{gro}M. Gromov, Groups of polynomial growth and expanding maps, {\em Inst. Hautes Etudes Sci. Publ. Math.} {\bf 53} (1981), 53--73.

\bibitem{kap}I. Kaplansky, Rings of operators, W. A. Benjamin, Inc. (1968).  

\bibitem{km1} M. Kapovich and J. Millson, On representation varieties of Artin groups, projective arrangements and the fundamental groups of smooth complex algebraic varieties, {\em Inst. Hautes Études Sci. Publ. Math.} {\bf 88} (1998), 5–-95

\bibitem{km2} M. Kapovich and J. Millson, On representation varieties of 3-manifold groups, {\it Geom. Topol.} {\bf 21} (2017), 1931-–1968
  
\bibitem{kes}H. Kesten, Symmetric random walks on groups, {\em Trans. Amer. Math. Soc.} {\bf 92} (1959), 336--354.
  
\bibitem{mvd}A. Maes and A. Van Daele, Notes on compact quantum groups, {\em Nieuw Arch. Wisk.} {\bf 16} (1998), 73--112. 

\bibitem{mal}S. Malacarne, Woronowicz's Tannaka-Krein duality and free orthogonal quantum groups, preprint 2016.

\bibitem{mvo}F.J. Murray and J. von Neumann, On rings of operators, {\em Ann. of Math.} {\bf 37} (1936), 116--229.

\bibitem{nak1}K. Nakamoto, Representation varieties and character varieties, {\em Publ. Res. Inst. Math. Sci.} {\bf 36} (2000), 159-–189

\bibitem{nak2}K. Nakamoto and T. Torii, Topology of the representation varieties with Borel mold for unstable cases, {\em J. Aust. Math. Soc.} {\bf 91} (2011), 55-–87
  
\bibitem{ntu}S. Neshveyev and L. Tuset, Compact quantum groups and their representation categories, SMF (2013).

\bibitem{rap}A.S. Rapinchuk, V.V. Benyash-Krivetz and V.I. Chernousov, Representation varieties of the fundamental groups of compact orientable surfaces, {\em Israel J. Math.} {\bf 93} (1996), 29-–71  
  
\bibitem{ser}J.P. Serre, Linear representations of finite groups, Springer (1977).

\bibitem{sik}A. Sikora, Character varieties, {\em Trans. Amer. Math. Soc.} {\bf 364} (2012), 5173-–5208  
  
\bibitem{tho}E. Thoma, \"Uber unit\"are Darstellungen abz\"ahlbarer, diskreter Gruppen, {\em Math. Ann.} {\bf 153} (1964), 111--138.

\bibitem{var} N. Th. Varopoulos, L. Saloff-Coste and T. Coulhon, Analysis and geometry on groups, Cambridge University Press (1992)
  
\bibitem{von}J. von Neumann, On rings of operators. Reduction theory, {\em Ann. of Math.} {\bf 50} (1949), 401--485.

\bibitem{wa1}S. Wang, Free products of compact quantum groups, {\em Comm. Math. Phys.} {\bf 167} (1995), 671--692.

\bibitem{wa2}S. Wang, Quantum symmetry groups of finite spaces, {\em Comm. Math. Phys.} {\bf 195} (1998), 195--211.

\bibitem{wa3}S. Wang, $L_p$-improving convolution operators on finite quantum groups, {\em Indiana Univ. Math. J.} {\bf 65} (2016), 1609--1637.

\bibitem{wei}D. Weingarten, Asymptotic behavior of group integrals in the limit of infinite rank, {\em J. Math. Phys.} {\bf 19} (1978), 999--1001.

\bibitem{wo1}S.L. Woronowicz, Compact matrix pseudogroups, {\em Comm. Math. Phys.} {\bf 111} (1987), 613--665.

\bibitem{wo2}S.L. Woronowicz, Tannaka-Krein duality for compact matrix pseudogroups. Twisted SU(N) groups, {\em Invent. Math.} {\bf 93} (1988), 35--76.

\end{thebibliography}
\end{document}